\documentclass{amsart}
\usepackage[mathscr]{eucal}
\usepackage{amssymb, amsmath,array, amscd}
\usepackage[OT2,T1]{fontenc}

\newtheorem{theorem}{Theorem}[section]
\newtheorem{lemma}[theorem]{Lemma}
\newtheorem{rem}[theorem]{Remark}
\newtheorem{claim}[theorem]{Claim}
\newtheorem{prob}{Problem}

\theoremstyle{definition}

\newtheorem{definition}[theorem]{Definition}

\newtheorem{proposition}[theorem]{Proposition}

\newtheorem{cor}[theorem]{Corollary}

\numberwithin{equation}{section}

\newtheorem{main}{Theorem}

          % The variety in $\mathbb P(T\mathbb P^n)$ determined by the foliation
          % The restriction to $\TF$ of the projection $ \pi: \mathbb P(T\mathbb P^n) \to \mathbb P^n$

  % The tangential web of #1
      % The tangential web of $\mathcal F$

   % the axis of #1
             % the axis of $\WF$

\newcommand{\M}[1]{\mathfrak{M}_{0,{#1}}}   % $M_{0, #1}$

           % two dimensional vector space whose projectivization is $\ell$

%\DeclareMathOperator{\Aff}{Aff}

\def\O{\mathcal{O}}

\def\C{\mathbb C}
\def\si{\sigma}
\def\var{\varphi}
\def\a{\alpha}
\def\be{\beta}
\def\d{\mathbb{D}}
\def\N{\mathbb{N}}
\def\om{\omega}

\def\X{\mathcal X}

\def\G{\mathcal G}
\def\H{\mathcal H}
\def\Te{\Theta}
\def\fa{\mathcal{F}}
\def\p{\mathbb{P}}
\def\Ga{\Gamma}
\def\sub{\subset}
\def\ov{\overline}
\def\g{\gamma}
\def\pa{\partial}
\def\M{\mathcal{M}}
\def\la{\lambda}
\def\U{\mathcal{U}}
\def\I{\mathcal{I}}
\def\emp{\emptyset}
\def\Si{\Sigma}
\def\te{\theta}

\def\fol{\mathbb{F}ol}

\def\wt{\widetilde}

\begin{document}

\title[Pull-back components]{Pull-back components of the space of foliations of codimension $\ge2$}

%\subjclass[2013]{Primary REQUIRED; Secondary OPTIONAL}

\subjclass{37F75 (primary); 32G34, 32S65 (secondary)}

\author[W. Costa e Silva]{W. Costa e Silva}
\address{IMPA, Est. D. Castorina, 110, 22460-320, Rio de Janeiro, RJ, Brazil}

\email{wancossil@gmail.com}

\author[A. Lins Neto]{A. Lins Neto}
\address{IMPA, Est. D. Castorina, 110, 22460-320, Rio de Janeiro, RJ, Brazil}

\email{alcides@impa.br}

%\thanks{We are deeply grateful to D. Cerveau for the discussions, suggestions and comments. This work was developed at IRMAR and IMPA. It was partially supported by CAPES (process number 9814-13-2)}

\date{}

%    "Communicated by" -- provide editor's name; required.
%\commby{}

\begin{abstract}
We present a new list of irreducible components for the space of k-dimensional  holomorphic foliations on  $\mathbb P^{n}$, $n\geq3$, $k\ge2$.  They are associated to pull-back of dimension one foliations on $\mathbb P^{n-k+1}$ by non-linear rational maps.
\end{abstract}

\maketitle
\setcounter{tocdepth}{1}
\tableofcontents \sloppy

\section{Introduction}\label{ss:1}
A codimension $q$ singular holomorphic foliation $\mathcal F$ on a complex manifold $M$, $dim_{\mathbb C}M\geq q+1$, can be defined locally by the following data: 

\begin{enumerate}
\item a covering $\mathcal U = (U_{\alpha})_{{\alpha \in A}}$ of $M$ by open sets.
\item a collection $(\eta_{\alpha})_{{\alpha \in A}}$ of  $q$-forms, $\eta _{\alpha} \in \Omega^q_{U_{\alpha}}$, having the following properties:
\begin{enumerate}
\item Local decomposability: Given  $p\in U_{\alpha}$ such that $\eta _{\alpha}(p)\neq0$ there exist a neighborhood $U$ of $p$, $U\subset U_{\alpha}$, and holomorphic $1$-forms 
$\omega_1,\cdots,\omega_q$ such that  $\eta_{\alpha}|_{U}= \omega_1 \wedge \cdots \wedge \omega_q;$\\
\item The decomposition of $\eta _{\alpha}$ satisfy the Frobenius integrability condition
$d  \omega_i \wedge  \eta_{\alpha} =0 \quad \text{ for every } i = 1 , \ldots, q$.
\end{enumerate}
\item a multiplicative cocycle $G:=(g_{\alpha\beta})_{U_{\alpha}\cap U_{\beta}\neq0}$ such that $\eta_{\alpha}=g_{\alpha\beta}\eta_{\beta}$.
\end{enumerate}

When $dim_\C M=n$ we say also that the foliation is $(n-q)$-dimensional.

The line bundle induced  by the cocycle $G$ is denoted by $N_{\mathcal F}$ and called the normal bundle of $\mathcal F$. The family $(\eta_{\alpha})_{{\alpha \in A}}$, defines a global section $\eta \in H^0(M,\Omega^q_M\otimes N_{\mathcal F})$. 
The analytic subset, $Sing({\mathcal F}):=\bigcup_\alpha\{p\in {M}|\eta_{\alpha}(p)=0\}$ is the singular set of ${\mathcal F}$. We can always assume that $Sing({\mathcal F})$ has codimension greater or equal than two.

If $M=\mathbb P^{n}$, the $n$-dimensional complex projective space, then a codimension $q$ singular foliation $\mathcal F$ is given by a global section of 
$H^{0}(\mathbb P^{n},\Omega^q_{\mathbb P^{n}}\otimes \mathcal O_{\mathbb P^n}(\Theta+q+1))$, where $\Theta$ (called the degree of $\mathcal{F}$)  is  the degree of the divisor of tangencies of the foliation with a generic $\mathbb{P}^{\,q}$ linearly embedded in $\mathbb P^n$.  
On the other hand, a global section of
$H^{0}(\mathbb P^{n},\Omega^q_{\mathbb P^{n}}\otimes \mathcal O_{\mathbb P^n}(\Theta+q+1))$ can be represented, in homogeneous coordinates, by a polynomial $q$-form $\eta$ on $\mathbb C^{n+1}$ with homogeneous coefficients of degree
$\Theta+1$ and satisfying $i_R\eta=0$, where $R=\sum_{i=0}^{n}z_i\frac{\partial}{\partial z_i}$ is the  radial vector field. In fact, the $q$-form $\eta$ defines the foliation $\Pi^{\ast}(\mathcal F)$, where $\Pi:\mathbb C^{n+1}\backslash \{0\}\to\mathbb P^n$ is the canonical projection. For more details see \cite{jou} and  \cite{ln}.

The projectivisation of the set of integrable $q$-forms as above, defining in homogeneous coordinates $k$-dimensional foliations on $\p^n$ of degree $\Te$, will be denoted by $\fol(\Te;k,n)$. 
Due to the integrability condition, $\fol(\Te;k,n)$ is a quasi projective algebraic subset of $\p H^{0}(\mathbb P^{n},\Omega^q_{\mathbb P^{n}}\otimes \mathcal O_{\mathbb P^n}(\Theta+n-k+1))$. 
A natural question that arises is the following:

\vskip0.2cm
\noindent \textbf{{\underline{Problem}: \emph{Identify and classify the irreducible components of \break $\mathbb{F}{\rm{ol}}\left(\Theta
;k,n\right)$  on ${\mathbb P^n}$, such that $\Theta
\geq 0$ and $n\geq k+1$}}}.
\vskip0.2cm
The classification of the irreducible components of $\mathbb{F}{\rm{ol}}\left(0;k,n\right)$ was given in \cite[Th. 3.8 p. 46]{cede} (a k-dimensional  foliation of degree zero on $\mathbb P^n$ is a rational fibration defined by a linear projection from $\mathbb P^n$ to $\mathbb P^{n-k}$). The classification of the irreducible components of $\mathbb{F}{\rm{ol}}\left(1;k,n\right)$, which require more details to be explained here, was given in \cite[Th. 6.2 and Cor. 6.3 p. 935-936]{lpt}. The situation for $\Theta\geq2$ remains wide open, unless in the case of codimension one and degree $\Theta=2$  (see\cite{cln}).

Usually, the space of codimension one foliations on $\p^n$ of degree $k$ is denoted by $\mathbb{F}{\rm{ol}}\left(k,n\right)$. The study of irreducible components of these spaces has been initiated by Jouanolou in \cite{jou}, where  the irreducible components of  $\mathbb{F}{\rm{ol}}\left(k,n\right)$ for $k=0$ and $k=1$ are described. The case of degree two was studied in the  paper \cite{cln}, where the authors proved that $\mathbb{F}{\rm{ol}}\left(2,n\right)$ has six irreducible components, which can be described by geometric and dynamic properties of a generic element. 
In the general case, degree $\ge3$, one can exhibit some kind of list of irreducible components in every degree, but it is not known if this list is complete. 

When we study the components of the space $\mathbb{F}{\rm{ol}}\left(k,n\right), n\geq 3$ we perceive that there are families of irreducible components in which the typical element is a pull-back of a foliation on $\mathbb{P}^2$ by a rational map. More precisely, the situation is as follows: given a generic rational map $f: \mathbb{P}^n  -\to \mathbb{P}^2$ of degree $\nu\geq1$, and a degree $d$ foliation $\mathcal G$ of $\p^2$, then it can be associated to the pair $(f,\mathcal G)$ the pull-back foliation $f^*\mathcal G$ on $\p^n$.
If $f$ and $\mathcal G$ are generic then the degree of $f^*\mathcal G$ is $\nu(d+2)-2$, as proved in \cite{clne}.
Denote by $PB(d,\nu,n)$ the closure in $\mathbb{F}{\rm{ol}}\left(\nu(d+2)-2,n\right)$, $n \geq 3$ of the set of foliations $f^{\ast}\mathcal G$ as above.
The main result of \cite{clne} is the following:
 
\begin{theorem}\label{teo1.1}  \cite{clne} The set $PB(d,\nu,n)$ is a unirational irreducible component of\break $\mathbb{F}{\rm{ol}}\left(\nu(d+2)-2,n\right)$ for all $n \geq 3$, $\nu\geq1$  and $d \geq 2$.
\end{theorem}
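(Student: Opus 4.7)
The plan is to establish in sequence that $PB(d,\nu,n)$ is irreducible and unirational, then to verify that it has the expected dimension, and finally to prove that no generic element can be deformed outside of it. The first two steps are essentially formal. Let $\mathcal{R}(\nu,n)$ denote the Zariski-open subset of $\p(H^0(\p^n,\O_{\p^n}(\nu))^{\oplus 3})$ parameterizing triples $(F_0,F_1,F_2)$ with common zero locus of codimension three; these correspond to generically finite rational maps $f:\p^n\dashrightarrow \p^2$ of degree $\nu$. The pull-back construction
\[
\Phi:\mathcal{R}(\nu,n)\times\fol(d,2)\longrightarrow\fol(\nu(d+2)-2,n),\qquad (f,\mathcal{G})\mapsto f^*\mathcal{G},
\]
is a rational morphism from an irreducible unirational source, and $PB(d,\nu,n)=\overline{\Image(\Phi)}$ is therefore irreducible and unirational.

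The second step is a dimension count. The generic fiber of $\Phi$ through $f^*\mathcal{G}$ consists of the pairs $(\phi\circ f,\phi^*\mathcal{G})$ with $\phi\in\Aut(\p^2)=PGL(3)$, and so has dimension $8$; this gives $\dim PB(d,\nu,n)=\dim\mathcal{R}(\nu,n)+\dim\fol(d,2)-8$. To conclude that $PB(d,\nu,n)$ is an irreducible \emph{component} of $\fol(\nu(d+2)-2,n)$, it suffices to show that every sufficiently small holomorphic deformation $\{\mathcal{F}_t\}$ of a generic $\mathcal{F}_0=f^*\mathcal{G}\in PB(d,\nu,n)$ stays in $PB(d,\nu,n)$, i.e. can be written $\mathcal{F}_t=f_t^*\mathcal{G}_t$ with $(f_t,\mathcal{G}_t)$ varying holomorphically in $t$.

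The main, hard step is this stability statement, and the crucial tool is the Kupka phenomenon. For generic $\mathcal{G}\in\fol(d,2)$ with $d\ge 2$ every singularity $p$ of $\mathcal{G}$ is nondegenerate, so a defining $1$-form $\omega$ satisfies $d\omega(p)\neq 0$. Pulling back, $\mathcal{F}_0$ acquires over each such $p$ a codimension-two component $K_p=f^{-1}(p)\subset\sing(\mathcal{F}_0)$ which is a Kupka set whose transversal type is inherited from that of $\mathcal{G}$ at $p$. By the Kupka stability theorem and its global version, each $K_p$ persists as a nearby Kupka set $K_{p,t}\subset\sing(\mathcal{F}_t)$ with constant transversal type. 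The plan is then to show that the $K_{p,t}$ remain scheme-theoretic complete intersections of two sections of $\O_{\p^n}(\nu)$, and that the family of ideals $\{I(K_{p,t})\}_{p\in\sing(\mathcal{G})}$ cuts out a three-dimensional linear system of degree-$\nu$ hypersurfaces on $\p^n$, assembling into a rational map $f_t:\p^n\dashrightarrow \p^2$ of degree $\nu$. One then checks $\mathcal{F}_t=f_t^*\mathcal{G}_t$ for a unique $\mathcal{G}_t\in\fol(d,2)$.

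The principal obstacle lies in this reconstruction. One has to show: (i) each deformed Kupka set $K_{p,t}$ persists as a complete intersection of two sections of $\O_{\p^n}(\nu)$, without splitting, merging, or acquiring embedded components, which is an unobstructedness statement for its ideal sheaf; (ii) the total linear system generated by these sections as $p$ ranges over $\sing(\mathcal{G})$ remains three-dimensional and varies holomorphically in $t$; and (iii) the hypotheses $d\ge 2$ and genericity of $f$ ensure $\mathcal{G}$ has enough singularities to pin down the linear system intrinsically from $\mathcal{F}_t$, so that $f_t$ is canonical up to $PGL(3)$. At first order, each of these claims reduces to a cohomological vanishing for the ideal sheaf of $\bigcup_{p\in\sing(\mathcal{G})} K_p$ twisted by $\O_{\p^n}(\nu)$, which is the computational heart of the argument of \cite{clne}.
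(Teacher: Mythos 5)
First, a point of provenance: the paper does not actually prove Theorem \ref{teo1.1} --- it quotes it from \cite{clne} --- but it recovers the nonlinear case $\nu\ge2$ as the specialization $k=n-1$ of Theorem \ref{teob}, so the relevant comparison is with the argument of \S\,\ref{ss:3}. Your overall architecture agrees with that argument: unirationality and irreducibility come for free from the parametrizing map; the component statement is reduced to showing that every germ of holomorphic deformation of a \emph{generic} $f^*\G$ stays in the pull-back set; and the persistence of the Kupka fibres $\overline{f^{-1}(p)}$, $p\in\sing(\G)$, with constant transversal type is the engine of the reconstruction of $f_t$. (The dimension count you insert is harmless but unnecessary and is not used by the paper: once deformation-stability of a generic element is known, irreducibility of the ambient component forces equality.)

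The genuine gap is in your reconstruction step, and it sits exactly where the nonlinear case $\nu\ge2$ differs from the linear one. You never analyze the singularities of $\fa_0=f^*\G$ along the indeterminacy locus $I(f)$, i.e.\ the common zero locus of $F_0,F_1,F_2$ ($\nu^3$ points when $n=3$, a positive-dimensional complete intersection when $n\ge4$), which is where \emph{all} the Kupka fibres meet. These are not Kupka points; they are the quasi-homogeneous ``conic'' NGK singularities of \S\,\ref{ss:25}, and their persistence under deformation (theorem \ref{t:216}, corollary \ref{c:217}, lemma \ref{l:310}) supplies precisely the two things your outline leaves open. First, it produces the deformed base locus $Con(\fa_t)$, still a multitransverse complete intersection of the right degree through which every deformed Kupka fibre passes; combined with Sernesi's stability of complete intersections and Noether's theorem this is what makes the deformed fibres the fibres of a \emph{single} degree-$\nu$ net, i.e.\ of a map $f_t$. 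Your proposed substitute --- a cohomological vanishing for each ideal sheaf separately --- would at best show that each Kupka component deforms as a complete intersection; it says nothing about the mutual incidence of the $d^2+d+1$ deformed curves, which is what pins down the linear system. Second, and more seriously, your outline never constructs $\G_t$: knowing the fibres of $f_t$ over $\sing(\G_t)$ does not determine a degree-$d$ foliation of $\p^2$. The paper recovers $\G_t$ intrinsically as the foliation induced on the exceptional divisor after blowing up a conic point (remark \ref{r:218}). Finally, the identification $\fa_t=f_t^*(\G_t)$, which you dispatch with ``one then checks,'' is itself a nontrivial lemma (lemma \ref{l:36}): it requires building a common separatrix of $\fa_t$ and $f_t^*(\G_t)$ along a Kupka fibre and invoking the Zariski density of the leaves of a generic $\G_t$ (no algebraic invariant sets) to propagate the coincidence to all of $\p^n$. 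Without these three inputs the proposal does not close.
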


The case $\nu=1$, of linear pull-backs, was proven in \cite{caln}, whereas the case $\nu>1$, of nonlinear pull-backs, was proved in \cite{clne}. 
In the paper \cite{cupe} the authors were able to prove the existence of irreducible components of linear pull-back foliations in arbitrary codimension. This was obtained by using the technics of stability of the tangent sheaf of the foliation. Linear Pull-back foliations have tangent sheaf which is locally free and therefore are stable. On the other hand, irreducible components of the space of foliations where a typical element is a non linear pull-back was known only for the codimension one situation.  The main purpose of this work is to show that at least in some cases there exist non linear pull-back type components of $\mathbb{F}{\rm{ol}}\left(\Theta;k,n\right)$, for several values of $\Theta$.
Let us describe, briefly, the type of pull-back foliation that we shall consider.\\ 
From now on we will always assume $n\geq3$. Let $f: {\mathbb P^n}  -\to \mathbb{P}^{\,q+1}$ be a rational map, represented in the homogeneous coordinates $z\in \mathbb{C}^{n+1}$ and
$x \in \mathbb C^{\,q+2}$ by \break $\wt{f}=(F_{0},F_{1},...,F_{q+1})$, where the $F_{i's}$ are homogeneous polynomials of degree $\nu$, without common factors.
Let $\mathcal G$ be a 1-dimensional foliation on $\mathbb P^{\,q+1}$ of degree $d$ and \break $\Pi_{q+1}:\C^{q+2}\setminus\{0\}\to\p^{\,q+1}$ be the canonical projection. Then $\Pi_{q+1}^{\ast}\mathcal G$ can be defined by a $q$-form $\Omega=i_Ri_XdV$, where $R$ is the radial vector field, $X=\sum_{i=0}^{q+1}P_i(x)\frac{\partial}{\partial x_i}$ is a homogeneous polynomial vector field of degree $d$ and
$dV=dx_0\wedge\dots\wedge dx_{q+1}$.  
If $f$ and $\mathcal G$ are generic, in a sense that will be precised later, then the form $\wt{f}^*\,\Omega$ represents in homogeneous coordinates the foliation $f^*\G:=\fa$, of codimension $q$ on $\p^n$.
It can be checked that the degree of $\fa$ is $\Theta(\nu,d,q)=(d+q+1)\nu-q-1$.
Let $PB(\nu,d,k,n)$, $k=n-q$, be the closure in $\fol(\Te;k,n)$, $\Te=\Te(\nu,d,q)$, of the set of this kind of foliations. In this paper we are able to prove the following:

\begin{main}\label{teob} The set $PB(\nu,d,k,n)$ is a unirational irreducible component of $\mathbb{F}{\rm{ol}}\left(\Theta;k,n\right)$ for all $\nu\geq2$, $d \geq 2$, $k\ge2$ and $n\geq3$. 
\end{main}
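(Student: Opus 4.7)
The plan is to establish the standard two-step pattern: (i) produce an explicit rational parametrization of $PB(\nu,d,k,n)$, which makes both irreducibility and unirationality automatic; (ii) show that a generic $\fa_0\in PB(\nu,d,k,n)$ admits a neighborhood in $\fol(\Te;k,n)$ entirely contained in $PB(\nu,d,k,n)$. Together these imply $PB(\nu,d,k,n)$ is a unirational irreducible component.

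\textbf{Parametrization.} I would take $\mathcal P$ to be the Zariski-open subset of $(\Sym^\nu\C^{n+1})^{q+2}\times \fol(d;1,q+1)$ consisting of pairs $(\wt f,\G)$ such that $\wt f=(F_0,\dots,F_{q+1})$ has no common factor and $\G$ is generic (in particular $\sing\G$ is a finite reduced set of non-degenerate singularities). The pull-back rule $\Phi(\wt f,\G)=[\wt f^{\,*}(i_Ri_XdV)]$ is a morphism $\Phi:\mathcal P\to\fol(\Te;k,n)$ whose image closure is $PB(\nu,d,k,n)$; since $\mathcal P$ is irreducible and rational, $PB(\nu,d,k,n)$ is unirational and irreducible.

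\textbf{Local analysis at a generic pull-back.} For a generic $(\wt f_0,\G_0)\in\mathcal P$ and $\fa_0=\Phi(\wt f_0,\G_0)$, I would analyze $\sing(\fa_0)$. It decomposes into the indeterminacy locus $I=\{F_0=\cdots=F_{q+1}=0\}$ of dimension $k-2$ together with $K:=f_0^{-1}(\sing\G_0)$, which splits as a disjoint union $\bigsqcup_s K_s$ of codimension $q+1$ complete-intersection subvarieties indexed by $s\in\sing\G_0$. Along each $K_s\setminus I$, the foliation $\fa_0$ admits a local product decomposition: a smooth $(k-1)$-disc transverse to a codimension-$q$ germ at the origin of $\C^{q+1}$ induced by the transverse model of $\G_0$ at $s$. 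This is the higher-codimension Kupka-type phenomenon.

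\textbf{Deformation and main obstacle.} The core of the argument is to show that this product structure is stable: for each $\fa$ close to $\fa_0$ and each $s$, there is an irreducible complete-intersection component $K_s(\fa)\subset\sing(\fa)$ of the same dimension and transverse type, varying holomorphically with $\fa$. Given such stability, the $F_i$ are recovered (up to $\Aut(\p^{q+1})$) from the ideals of the hypersurfaces of the pencils containing the $K_s(\fa)$: because $d\ge 2$ forces $\#\sing\G_0\ge q+2$, this interpolation uniquely determines a nearby rational map $f_\fa:\p^n-\to\p^{q+1}$, and $\G_\fa:=(f_\fa)_*\fa$ is a nearby $1$-dimensional foliation of degree $d$ on $\p^{q+1}$ with $\fa=f_\fa^{\,*}\G_\fa$, placing $\fa\in PB(\nu,d,k,n)$. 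The principal obstacle is precisely this Kupka-type stability for $q\ge 2$: unlike the codimension-one case of \cite{clne} where the classical Kupka theorem applies directly, the transverse object here is an integrable $q$-form, so the persistence argument must simultaneously preserve the local product structure and the integrability condition under deformation. The hypotheses $\nu\ge 2$, $d\ge 2$, $k\ge 2$ enter exactly at this step, and I expect it can be handled by combining a Malgrange-type holomorphic version of Kupka stability on transversals with the tangent-sheaf techniques of \cite{cupe}.
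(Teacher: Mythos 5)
Your parametrization step and your identification of the singular-set skeleton (the indeterminacy locus $I(f)$ of dimension $k-2$ plus the complete-intersection fibers over $\sing\G_0$ carrying a local product structure) match the paper's setup. But the proposal has two genuine gaps at exactly the places where the real work happens. First, you never explain how the one-dimensional foliation is recovered from a deformation $\fa_t$ of $\fa_0$: writing $\G_\fa:=(f_\fa)_*\fa$ presupposes that $\fa$ is projectable along the fibers of $f_\fa$, which is essentially the conclusion you are trying to reach, not an input. The paper instead extracts the family $(\G_t)$ from the points of $I(f)$, which are \emph{conic NGK singularities}: by the persistence theorem for nilpotent generalized Kupka points (from \cite{ln3}), each $p_j\in I(f)$ deforms to a conic point $P_j(t)$ of $\fa_t$, and blowing up at $P_1(t)$ produces an exceptional divisor $E\simeq\p^{n-1}$ on which $\fa_t$ induces a foliation $\G_t$ deforming $\G_0$. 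This conic-singularity analysis (\S 2.5 of the paper) is entirely absent from your plan, and without it there is no candidate $\G_t$.

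Second, and more seriously, even after one has constructed $f_t$ (via Kupka stability of the curves $V_{q_j}$, Sernesi's theorem and Noether's multitransversality argument — which is what your "interpolation" step amounts to) and $\G_t$, the equality $\fa_t=f_t^*(\G_t)$ is \emph{not} automatic: a priori the two foliations merely share the singular curves $V_j(t)$. The paper's Lemma \ref{l:36} proves the equality by producing a common leaf of $\fa_t'$ and $\wt\fa_t'=f_t^*(\G_t)'$ — a separatrix strip along $V_1'(t)$ glued from local product charts of the Kupka structure — and then invoking that for $\G_t\in\M_g(d)$ every leaf of $\G_t$ is Zariski dense in $\p^{n-1}$, hence the common leaf is Zariski dense in $\p^n$ and the foliations coincide. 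This is why the hypothesis that $\G_0$ has no algebraic invariant sets (the set $\M_g(d)$ of \cite{cope}) is essential, and nothing in your proposal plays this role. Finally, your suggested fix for the "main obstacle" — the tangent-sheaf stability techniques of \cite{cupe} — is explicitly ruled out by the paper's introduction: those techniques apply to \emph{linear} pull-backs, whose tangent sheaf is locally free and stable, and the whole point here is that $\nu\ge2$. The paper's substitute is the local product structure of de Medeiros for integrable $(n-2)$-forms together with the slicing of the case $k\ge3$ by $(q+2)$-planes transverse to $Con(\fa_t)$ (Lemmas \ref{l:310} and \ref{l:311}), reducing everything to the two-dimensional case.
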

Let us observe that a generic element in the set $PB(\nu,d,k,n)$ has no algebraic invariant leaf because a generic foliation by curves of degree $\geq2$ also does not have \cite{lnsoares}.
It is worth pointing out that we recover Theorem \ref{teo1.1} in the case $n\ge3$ and $k=n-1$. On the other hand, for $n\geq4$ we present new families of irreducible components that were not known.

The proof of theorem \ref{teob} will be done first in the case $k=2$, that is for two dimensional foliations on $\p^n$ that are pull back of one dimensional foliations on $\p^{n-1}$. The general case, $k\ge3$, will be done in \S\,\ref{ss:34}.  

\section{Preliminaries}

\subsection{One dimensional foliations on $\mathbb P^m$, $m\ge2$}\label{ss:21}

Let us recall some definitions and basic facts about one dimensional foliations that we will use.
Let $\mathcal X$ be a germ at $0\in\mathbb C^m$ of holomorphic vector field, $0\in Sing(\mathcal X)$ and denote by $\lambda_1,\dots,\lambda_{m}\in \mathbb C$ the eigenvalues of the linear part of $D\mathcal X(0)$. The germ of foliation defined by $\mathcal X$ will be denoted by $\mathcal G_{\mathcal X}$. We say that $0$ is a non-degenerate singularity of $\mathcal X$ if $\lambda_j\neq0$ for all $j=1,...,m$. In this case, $0$ is an isolated singularity of $\X$. In this case, the singularity is hyperbolic if all the quotients ${\lambda_i}/{\lambda_j}$, $i\ne j$, are not real. The singularity is of Kupka type if $tr(D\mathcal X(0))=\lambda_1+\dots+\lambda_m\neq0$. When $0$ is a hyperbolic singularity of $\mathcal X$ then there are exactly $m$ germs of analytic $\mathcal G_{\mathcal X}$-invariant curves, say $\Gamma_1,\dots,\Gamma_m$, through $0\in\mathbb C^m$, called the separatrices of  $\mathcal G_{\mathcal X}$ through $0$. Moreover, $\Gamma_j$ is smooth and tangent to the eigenspace associated to $\lambda_j$, $1\le j\le m$ (see\cite{lnsoares}).  

Let us denote by $\mathbb{F}{\rm{ol}}\left(d;1,m\right)$ the space of one dimensional foliations on $\mathbb P^{m}$. From \cite{ln3} and \cite{lnsoares} we know that, given $m\geq2$ and $d\geq2$, there is an open and dense subset $\mathcal M(d)\subset\mathbb{F}{\rm{ol}}\left(d;1,m\right)$ such that any $\mathcal G\in\mathcal M(d)$ satisfies: 

\begin{enumerate}
\item $\mathcal G$ has exactly $N=\frac{d^{m+1}-1}{d-1}$ singularities, all of them hyperbolic
\item $\mathcal G$ has no algebraic invariant curve
\item All singularities are of Kupka-type
\end{enumerate}
 
When $m=2$ the property $(2)$ implies that the Zariski closure of any one dimensional leaf of $\mathcal G$ is $\mathbb P^2$. A natural question is what happens if $m\geq3$. In this case, it is not known if there exists an open and dense subset of $\mathbb{F}{\rm{ol}}\left(d;1,m\right)$ with a similar property. However, it is known that there exists a generic subset $\mathcal M_g(d)\subset\mathcal M(d)\subset\mathbb{F}{\rm{ol}}\left(d;1,m\right)$ such that any foliation $\mathcal G\in\mathcal M_g(d)$ has no algebraic invariant subset of positive dimension (see\cite{cope}). In particular, if $\mathcal G\in\mathcal M_g(d)$   then the Zariski closure of any leaf of $\mathcal G$ is $\mathbb P^{m}$.

\subsection{Rational maps}\label{ss:22}
Let $f : \p^n -\to  \p^m$ be a rational map, and let $\tilde{f}: {\mathbb C^{n+1}} \to {\mathbb C^{m+1}}$ its natural lifting in homogeneous coordinates.

\begin{definition}\label{d:21}
We denote by $RM\left(n,m,\nu\right)$ the set of maps $\left\{f: \mathbb P^n  -\to   \mathbb P^m\right\}$ of degree $\nu\geq 2$ given by
$\tilde f=\left(F_{0},F_{1},...,F_{m}\right)$  where the $F_{js}$, are homogeneous polynomials of degree $\nu$ and without common factors.
\end{definition}

The \emph{indeterminacy locus} of $f$ is, by definition, the set $I\left(f\right)=\Pi_{n}\left(\tilde{f}^{-1}\left(0\right)\right)$, where $\Pi_{n}:\mathbb C^{n+1}\backslash \{0\}\to\mathbb P^{n}$ is the canonical projection. Observe that the restriction $f|_{\mathbb P^n \backslash I\left(f\right)}$ is holomorphic. 

\begin{definition}\label{generic} We say that $f  \in RM\left(n,m,\nu\right)$ is $generic$ if for all  \break$p \in$ $\tilde{f}^{-1}\left(0\right)\backslash\left\{0\right\}$ we have $dF_{0}\left(p\right)\wedge dF_{1}\left(p\right)\wedge...\wedge dF_{m}\left(p\right) \neq 0.$  
\end{definition}

This is equivalent to saying that $f  \in RM\left(n,m,\nu\right)$ is $generic$ if $I(f)$ is the transverse intersection of the $m+1$ hypersurfaces $\Pi_n(F_{i}=0)$ for $i=0,...,m.$  If $f$ is generic, $n=m+1$ and $deg(f)=\nu$ then $I\left(f\right)$ consists of $\nu^{m+1}$ distinct points, by Bezout's theorem. On the other hand, if $n>m+1$ then $I(f)$ is a connected smooth complete intersection of degree $\nu^{m+1}$.

Let $U(f)=\mathbb P^n\backslash I(f)$, $P(f)$ be the set of critical points of $f$ in $U(f)$ and $C(f)=f(P(f))$ be the set of the critical values of $f$. If $f$ is generic, then $\overline{P(f)}\cap I(f)=\emptyset$, so that  $\overline{P(f)}=P(f)\subset U(f)$ (where $\overline A$ denotes the closure of $A\subset \mathbb P^n$ in the usual topology). Since $P(f)=\{p\in U(f);rank(df(p)\leq m-1\}$, it follows that $P(f)$ is a proper algebraic subset of $\mathbb P^n$ and $C(f)$ is a proper algebraic subset of $\p^{m}$. 
The set of generic maps of degree $\nu$ will be denoted by $Gen\left(n,m,\nu\right)$. 
\begin{proposition}
$Gen\left(n,m,\nu\right)$ is a Zariski open and dense subset of $RM\left(n,m,\nu\right)$.
\end{proposition}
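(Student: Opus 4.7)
The plan is to prove openness and density separately. The space $RM(n,m,\nu)$ sits as the Zariski open subset of the projective space $\p(V^{m+1})$ — where $V=H^{0}(\p^{n},\O_{\p^{n}}(\nu))$ — obtained by removing the closed locus of tuples of homogeneous polynomials with a common factor. In particular, $RM(n,m,\nu)$ is irreducible, so density will follow from openness together with the existence of a single generic map.

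For openness I would introduce the incidence variety
\[
Z = \{(f,[p])\in RM(n,m,\nu)\times\p^{n}\,:\,F_{0}(p)=\dots=F_{m}(p)=0\ \text{and}\ dF_{0}(p)\wedge\dots\wedge dF_{m}(p)=0\}.
\]
Since the $F_{i}$'s are homogeneous of degree $\nu$ and the $dF_{i}$'s are homogeneous of degree $\nu-1$, these conditions descend from $\C^{n+1}\setminus\{0\}$ to well-defined algebraic equations on $RM(n,m,\nu)\times\p^{n}$, so $Z$ is Zariski closed. Because $\p^{n}$ is projective, the projection $RM(n,m,\nu)\times\p^{n}\to RM(n,m,\nu)$ is closed; hence the image of $Z$ — which is precisely the complement of $Gen(n,m,\nu)$ — is Zariski closed.

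For density I would exhibit a concrete generic map by a Bertini-type argument. Because $\O_{\p^{n}}(\nu)$ is very ample, a generic section $F_{0}\in V$ cuts out a smooth hypersurface; having fixed such an $F_{0}$, a generic $F_{1}$ meets $\{F_{0}=0\}$ transversally; iterating, for a generic $(F_{0},\dots,F_{m})\in V^{m+1}$ the zero scheme $\{F_{0}=\dots=F_{m}=0\}\subset\p^{n}$ is a smooth complete intersection of codimension $m+1$. Smoothness of a complete intersection is equivalent to the transversality condition $dF_{0}\wedge\dots\wedge dF_{m}\neq 0$ along the intersection, which is exactly Definition~\ref{generic}. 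Moreover, having pure codimension $m+1$ forces the $F_{i}$ to share no common factor (any common factor would force the common zero locus to contain a hypersurface), so such a tuple automatically lies in $RM(n,m,\nu)$.

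The main technical point to be careful about is the algebraicity of the incidence variety $Z$: one must check that the vanishing of $F_{i}(p)$ and of the $(m+1)\times(n+1)$ minors of the Jacobian matrix descends to a genuine closed subscheme of $RM(n,m,\nu)\times\p^{n}$, despite being expressed in affine coordinates. Once this bookkeeping is settled the openness argument is routine, and density reduces entirely to the classical Bertini theorem.
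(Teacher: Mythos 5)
The paper states this proposition without any proof, so there is no argument of the authors to compare yours against; your proposal supplies the missing (standard) argument, and it is correct. The structure is the right one: constructibility of the bad locus via a proper incidence variety, plus irreducibility of $RM(n,m,\nu)$ and a Bertini argument to get nonemptiness, hence density. Two small points of bookkeeping you should tighten. First, the degeneracy condition $dF_{0}(p)\wedge\dots\wedge dF_{m}(p)=0$ is the vanishing of all $(m+1)\times(m+1)$ minors of the $(m+1)\times(n+1)$ Jacobian matrix, not of ``$(m+1)\times(n+1)$ minors''; each such minor is bihomogeneous, of degree $(m+1)(\nu-1)$ in $p$ and multilinear in the coefficient tuple, so $Z$ is indeed a closed subscheme of $RM(n,m,\nu)\times\p^{n}$ and no affine-chart issue arises. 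Second, in the Bertini step one should note (via the Euler relation $\sum_{j}x_{j}\partial F_{i}/\partial x_{j}=\nu F_{i}$) that at a common zero $p$ each $dF_{i}(p)$ annihilates the radial direction, so independence of the $dF_{i}(p)$ on $\C^{n+1}$ is equivalent to smoothness of the projective complete intersection at $[p]$; with that observation the equivalence you invoke between Definition~\ref{generic} and smoothness of $I(f)$ is exactly the one the authors record just after the definition.
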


\subsection{Generic pairs}\label{ss:23}
\begin{definition} Let $f$ be an element of $Gen\left(n,m,\nu\right)$. We say that  
$\mathcal G \in \mathcal M(d)\sub \fol(d;1,m)$ is in generic position with respect to $f$ if $Sing\left(\mathcal G\right)\cap C(f)=\emptyset$. In this case we will say that $(f,\mathcal G)$ is a generic pair.
\end{definition}

Set $\mathcal W=\{\fa; \fa = f^*\G$ and $(f,\mathcal{G})\in Gen\left(n,m,\nu\right)\times \mathcal M(d)$ is a generic pair$\}$.
We remark that $\mathcal W$ is an open and dense subset of $PB(\nu,d,k,n)$, where $k=n-m+1$. In fact it is a real Zariski open set.

As we have seen before, $f^*\G$ can be represented in homogeneous coordinates by $\wt{f}^*\Omega$, where
\[
\wt{f}=(F_0,...,F_m)
\]
represents $f$ and 
\[
\Omega=i_Ri_XdV=\sum_{i\neq k} (-1)^{i+k+1} x_{k}P_{i}dx_0\wedge...\wedge \widehat{dx_i}\wedge...\wedge\widehat{dx_k}\wedge...\wedge dx_{m} 
\]
represents $\G$.  The pull back foliation, $f^{*}(\mathcal G)$, is then defined in homogeneous coordinates by the $(m-1)$-form
\[
\eta_{[f,\mathcal G]}=\left[
 (-1)^{i+k+1}\sum_{i\neq k} F_{k}.(P_{i}\circ\tilde{f})\space dF_0\wedge...\wedge \widehat{dF_i}\wedge...\wedge\widehat{dF_k}\wedge...\wedge dF_{m}\right]\,\,.
\]
Since the $F_{k's}$ are homogenous of degree $\nu$ and $P_{i's}$ homogeneous of degree $d$, the coeficients of $\eta_{[f,\G]}$ are homogeneous of degree $(d+m)\nu-m+1$. From these considerations we get:

\begin{proposition}\label{graupargenerico}
If $\fa=f ^{\ast}\mathcal G$ where $(f,\mathcal G)$ is a generic pair, then the degree of $\fa$ is \break$\Theta(\nu,d,n)=(d+m)\nu-m$. 
\end{proposition}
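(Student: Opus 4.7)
My plan is to verify the proposition in three steps: a direct degree count on the explicit defining form, a check that it descends to $\mathbb{P}^n$, and a genericity argument ensuring that its coefficients share no common polynomial factor.

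First I would read off the degree of the coefficients from the expansion of $\eta_{[f,\mathcal G]}$. Each $F_k$ is homogeneous of degree $\nu$, each composition $P_i\circ\wt f$ of degree $d\nu$, and each $dF_j$ contributes coefficients of degree $\nu - 1$. A typical summand carries one factor of the form $F_k$, one of the form $P_i\circ\wt f$, and $m-1$ wedge factors $dF_j$, so the coefficients of $\eta_{[f,\mathcal G]}$ are homogeneous of degree $\nu + d\nu + (m-1)(\nu-1) = (d+m)\nu - m + 1$. Next I would verify that $i_R\eta_{[f,\mathcal G]} = 0$, using the analogous identity $i_R\Omega = 0$ together with Euler's identity $R(F_j)=\nu F_j$, which yields $\wt f_* R_n = \nu\, R_m\circ \wt f$ and hence $i_{R_n}\wt f^*\Omega = \nu\, \wt f^*(i_{R_m}\Omega) = 0$. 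These two facts show that $\eta_{[f,\mathcal G]}$ represents a section of $\Omega^{m-1}_{\mathbb P^n}\otimes \mathcal O_{\mathbb P^n}(\Theta + m)$ with $\Theta + 1 = (d+m)\nu - m + 1$, matching the claimed degree provided its coefficients are coprime.

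The main obstacle is precisely this coprimality, equivalently the bound $\codim\,\{\eta_{[f,\mathcal G]} = 0\}\geq 2$ in $\mathbb P^n$. The zero set is contained in $I(f)\cup P(f)\cup f^{-1}(Sing(\mathcal G))$. Since $f\in Gen(n,m,\nu)$, the indeterminacy locus $I(f)$ is a smooth complete intersection of codimension $m+1\geq 3$. Since $(f,\mathcal G)$ is a generic pair, $Sing(\mathcal G)\cap C(f)=\emptyset$, so $f$ is a submersion on a neighbourhood of $f^{-1}(Sing(\mathcal G))$ and that preimage has codimension $m\geq 2$. The delicate point is the critical locus $P(f)$, where $df$ drops rank to $m-1$; I would argue by a local transversality calculation that $\wt f^*\Omega$ does not vanish identically along $P(f)$: at a generic point $p\in P(f)$ the $(m-1)$-dimensional subspace $\mathrm{Im}(df_p)\subset T_{f(p)}\C^{m+1}$, together with the lines $\langle R(f(p))\rangle$ and $\langle X(f(p))\rangle$, spans $\C^{m+1}$, and the failure locus — where this span collapses — is a proper subvariety of $P(f)$ for a generic pair, not an entire component. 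Once the zero set has codimension at least $2$, the coefficients of $\eta_{[f,\mathcal G]}$ are coprime and the degree of $\mathcal F$ equals $\Theta(\nu, d, n) = (d+m)\nu - m$.
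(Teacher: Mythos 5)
Your core computation is exactly the paper's own argument: the coefficients of $\eta_{[f,\G]}$ are homogeneous of degree $\nu+d\nu+(m-1)(\nu-1)=(d+m)\nu-m+1$, and since a codimension-$(m-1)$ foliation of degree $\Theta$ on $\p^n$ is given in homogeneous coordinates by a form with coefficients of degree $\Theta+1$, this gives $\Theta=(d+m)\nu-m$. The paper stops at this degree count (``from these considerations we get\dots''), so your two supplementary checks go beyond what is written there. The verification that $i_R\eta_{[f,\G]}=0$ is correct and routine. The coprimality of the coefficients is indeed the point that genuinely requires the genericity hypotheses, and your stratification of the zero locus into $I(f)$ (codimension $m+1\ge 3$), $\ov{f^{-1}(Sing(\G))}$ (codimension $m\ge 2$) and $P(f)$ is the right way to organize it. The one soft spot is $P(f)$: the assertion that the locus where $\langle R(f(p)),X(f(p))\rangle+\mathrm{Im}\,(d\tilde f_{\tilde p})$ fails to span $\C^{m+1}$ is a proper subvariety of $P(f)$ does not follow formally from the paper's definition of a generic pair (which only demands $Sing(\G)\cap C(f)=\emp$), so as stated it is an unproved genericity claim. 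Two remarks would tighten it: by Euler's identity $R(f(p))=\nu^{-1}d\tilde f_{\tilde p}(\tilde p)$ always lies in $\mathrm{Im}\,(d\tilde f_{\tilde p})$, so at a point where the rank drops by one the condition reduces to $X(f(p))\notin\mathrm{Im}\,(d\tilde f_{\tilde p})$; and since $n\ge m+1$, the expected codimension of the rank-$(m-1)$ degeneracy locus is $n-m+1\ge 2$, so for $f$ generic in a slightly stronger sense than Definition \ref{generic} the set $P(f)$ contains no hypersurface at all. This gap is inherited from, and indeed hidden in, the paper itself, which never addresses coprimality; your write-up is the more careful of the two.
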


\subsection{The Kupka set}\label{ss:24}

The purpose of this section is to expose the main facts about the Kupka set of two dimensional foliations that will be used in the first part of the proof of theorem \ref{teob} (see \S\, \ref{ss:33}).  

Let $\eta$ be a germ at $(\C^{n},p)$ of holomorphic $(n-2)$-form. Since $d\eta$ is a $(n-1)$-form there exists a germ at $(\C^n,p)$ of vector field $Z$ such that $d\eta=i_Z\nu$, where $\nu=dz_1\wedge...\wedge dz_n$. The vector field $Z$ is called the {\it rotational} of $\eta$ with respect to $\nu$. We will denote $Z:=rot_\nu(\eta)$. If $\wt\nu$ is another non-vanishing $n$-form then there is a vector filed $\wt{Z}=rot_{\nu'}(\eta)$ such that $d\eta=i_{\wt{Z}}\nu'$. Since $\nu'=u.\,\nu$, where $u(p)\ne0$, we have $rot_{\nu'}(\eta)=u.\,rot_\nu(\eta)$. In particular, $rot_\nu(\eta)$ and $rot_{\wt\nu}(\eta)$ define the same germ of one dimensional foliation.
We say that $p$ is singularity of {\it Kupka type} of $\eta$ if $\eta(p)=0$ and $rot_\nu(\eta)(p)\ne0$. This condition is equivalent to
$\eta(p)=0$ and $d\eta(p)\ne0$. 
Therefore, if $\wt\eta=u.\,\eta$, where $u(p)\ne0$, then
\[
\eta(p)=0\,\,\text{and}\,\,d\eta(p)\ne0\,\,\iff\,\,\wt\eta(p)=0\,\,\text{and}\,\,d\wt\eta(p)\ne0\,\,.
\]
In other words, the concept depends only of the foliation defined by $\eta$, $\fa_\eta$.
In particular, it can be extended to foliations on complex manifolds.
\begin{definition}
Let $\fa$ be a two dimensional foliation on a complex manifold $M$. We say that $p\in M$ is a singularity of Kupka type of $\fa$ if $\fa$ is represented in a neighborhood of $p$ by a $(n-2)$-form with a Kupka singularity at $p$. The set of singularities of Kupka type of $\fa$ will be denoted by $K(\fa)$.
\end{definition}
The following result is a special case of more general one proved in \cite{medeiros} (see also \cite{ln3}):

\begin{proposition}\label{p:27}
({\it Local product structure.}) 
Let $\eta$ be a germ at $(\C^n,p)$ of integrable $(n-2)$-form and $Z=rot(\eta)$. Assume that $Z(p)\ne0$. Then there exists a coordinate system around $p$, say  $\phi=(y,t):U\to\mathbb C^{n-1}\times \mathbb C$, $y:U\to\mathbb C^{n-1}$, $t:U\to\mathbb C$ such that $\phi(p)=(0,0)$ and when expressed in this coordinate system $\eta$ depends only of $y$. In other words, we can write   $\eta=i_Ydy_{1}\wedge\dots\wedge dy_{n-1}$ where $Y$ is a vector field of the form $Y=\sum_{j=1}^{n-1}Y_{j}(y)\frac{\partial}{\partial y_j}$ and $Y(0)=0$. Moreover, $tr(DY(0))\neq0$, that is $0$ is a singularity of Kupka type of $Y$.
\end{proposition}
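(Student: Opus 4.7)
The plan is to straighten the rotational $Z$ by a holomorphic flow-box, exploit the integrability of $\eta$ to force $Z$ to be tangent to $\fa_\eta$, and thereby conclude that in the straightened coordinates $\eta$ depends neither on $dt$ nor on $t$, so that it descends from an $(n-2)$-form on $\C^{n-1}$ and is represented by a unique vector field $Y$.

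First I would invoke the holomorphic flow-box (straightening) theorem: since $Z(p)\ne0$, there exist local coordinates $\phi=(y_1,\ldots,y_{n-1},t)$ centered at $p$ with $Z=\partial/\partial t$. The volume form then takes the shape $\nu=h(y,t)\,dy_1\wedge\cdots\wedge dy_{n-1}\wedge dt$ with $h(0)\ne0$, so
\[
d\eta=i_Z\nu=(-1)^{n-1}\,h(y,t)\,dy_1\wedge\cdots\wedge dy_{n-1}.
\]
The crucial step is to prove $i_Z\eta=0$, i.e.\ that $Z$ is tangent to the foliation. Writing $\eta=\omega_1\wedge\cdots\wedge\omega_{n-2}$ locally and using the Frobenius conditions $d\omega_j\wedge\eta=0$, one gets $d\omega_j=\sum_\ell \omega_\ell\wedge\gamma_{j\ell}$ for some $1$-forms $\gamma_{j\ell}$. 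Expanding $d\eta=\sum_i(-1)^{i-1}\omega_1\wedge\cdots\wedge d\omega_i\wedge\cdots\wedge\omega_{n-2}$ and computing $\omega_j\wedge d\eta$, every term with $i\ne j$ contains $\omega_j$ twice and hence vanishes; the $i=j$ term also vanishes after substituting the Frobenius expression, since each $\omega_\ell$ reappears in one of the remaining factors. Therefore $\omega_j(Z)\,\nu=\omega_j\wedge i_Z\nu=\omega_j\wedge d\eta=0$, giving $\omega_j(Z)=0$ for every $j$ and hence $i_Z\eta=0$.

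Next I would decompose $\eta=\alpha+\beta\wedge dt$ with $\alpha,\beta$ involving only the $dy_k$; a direct computation yields $i_{\partial/\partial t}\eta=(-1)^{n-3}\beta$, so $i_Z\eta=0$ forces $\beta=0$ and $\eta=\alpha$. Matching $d\alpha=d_y\alpha+\partial_t\alpha\wedge dt$ with the $dt$-free expression for $d\eta$ yields $\partial_t\alpha=0$, so $\alpha$ depends only on $y$. As an $(n-2)$-form in the $n-1$ variables $y_1,\ldots,y_{n-1}$, $\alpha$ has a unique representation $\alpha=i_Y(dy_1\wedge\cdots\wedge dy_{n-1})$ with $Y=\sum_j Y_j(y)\,\partial/\partial y_j$. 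The identity $\eta(p)=0$ (inherent to the Kupka setting) gives $Y(0)=0$, while $d_y\alpha=(\mathrm{div}\,Y)\,dy_1\wedge\cdots\wedge dy_{n-1}$ combined with $d\eta(p)\ne0$ gives $\mathrm{tr}(DY(0))=\mathrm{div}\,Y(0)\ne0$.

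The main obstacle is the tangency step $i_Z\eta=0$: it is the codimension-$\ge2$ analogue of the codim-one identity $\omega(Z)\,\nu=\omega\wedge d\omega=0$ that underlies the original Kupka theorem, and its proof essentially uses both local decomposability and the full Frobenius system $d\omega_j\wedge\eta=0$. Once tangency is in hand the remaining normalization is bookkeeping, and the identification of the non-vanishing of $d\eta(p)$ with $\mathrm{tr}(DY(0))\ne0$ is immediate.
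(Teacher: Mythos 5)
Your argument is correct in substance, but note that the paper does not actually prove Proposition~2.7: it is quoted as a special case of the structural-stability/normal-form results of de Medeiros (with a pointer to \cite{ln3}), so your flow-box-plus-tangency derivation is a genuinely self-contained alternative. It is in fact the natural higher-codimension adaptation of Kupka's original codimension-one computation: the identity $\omega_j\wedge d\eta=\omega_j(Z)\,\nu$ together with the Frobenius relations $d\omega_j=\sum_\ell\omega_\ell\wedge\gamma_{j\ell}$ is exactly the right replacement for $\omega\wedge d\omega=\omega(Z)\,\nu$, and once $i_Z\eta=0$ is established the reduction to a $t$-independent, $dt$-free form and the identification $d\eta=\operatorname{div}(Y)\,dy_1\wedge\cdots\wedge dy_{n-1}$ are routine. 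What your route buys is transparency and elementarity; what the citation buys the authors is the full strength of de Medeiros's theorem (division properties and stability statements they reuse later, e.g.\ in Proposition~2.9). Two points you should tighten. First, local decomposability $\eta=\omega_1\wedge\cdots\wedge\omega_{n-2}$ is only guaranteed where $\eta\neq0$, and at the point of interest one has $\eta(p)=0$; so your computation gives $\omega_j(Z)=0$, hence $(i_Z\eta)(q)=0$, only at points $q$ of the dense open set where $\eta(q)\neq0$, and you must then extend $i_Z\eta\equiv0$ by continuity (the zero set of $\eta$ is a proper analytic subset since $d\eta(p)\neq0$ forces $\eta\not\equiv0$). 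Second, as you yourself observe, the conclusion $Y(0)=0$ requires the implicit hypothesis $\eta(p)=0$, which is not literally in the statement but is the standing assumption of the Kupka setting; it is worth saying explicitly that without it one only obtains the product structure $\eta=i_Y\,dy_1\wedge\cdots\wedge dy_{n-1}$ with $\operatorname{tr}(DY)(0)=\operatorname{div}Y(0)\neq0$, the latter following from $d\eta(p)=i_Z\nu(p)\neq0$.
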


\begin{rem}
{\rm It follows from proposition \ref{p:27} that $\fa_\eta$ is equivalent to a foliation which is a product of a one dimensional non-singular foliation by a the foliation induced by $Y$. By this reason, the vector field $Y$ is called the normal type of the Kupka set of $\fa_\eta$ at $p$.

If the normal type $Y$ has an isolated singularity at $0$, $det(DY(0))\ne0$, then in the coordinate system $(y,t)$ of proposition \ref{p:27}, the germ of $K(\eta)$ at $p$ is the smooth curve $(y=0)$. Moreover, the normal type is constant along this curve.
An example is when $Y$ has a non-degenerate singularity at $p$. In this case, we say that $p$ is a non-degenerate Kupka singularity of $\fa_\eta$.

In the global case, if a foliation $\fa$ on a complex compact manifold $M$ has a non-degenerate Kupka singularity $p$ then the irreducible component of $Sing(\fa)$ that contains $p\,$ is a compact complex curve $\Ga$ such that $\Ga\setminus K(\fa)$ is finite, say
$\Ga\setminus K(\fa)=\{p_1,...,p_r\}$, and the normal type of $\fa$ is constant along $\Ga\setminus\{p_1,...,p_r\}$.}
\end{rem}
The Kupka set is locally stable under deformations, as explained below.

Let $(\eta_t)_{t\in D_r}$ be a holomorphic one parameter family of integrable $(n-2)$-forms defined in a neighborhood of a closed ball $\ov{B}\sub\C^n$, $D_r=\{\tau\in\C\,|\,|\tau|<r\}$. Assume that $K(\eta_0)$ contains a holomorphic curve $\Ga$ with the following properties:
\begin{itemize}
\item[(i).] $\Ga':=\Ga\cap\ov{B}$ is biholomorphic to a closed disc, $\ov\d\sub\C$, and cuts transversely the boundary $\pa B$ of $\ov B$.
\item[(ii).] The normal type of $\eta_0$ along $\Ga'$, say $Y_0$, is non-degenerate. 
\end{itemize}
The following result is a consequence of \cite{medeiros}:

\begin{proposition}\label{p:29}
In the above situation there exists $r'<r$, a $C^\infty$ isotopy $\Phi\colon\Ga'\times D_{r'}\to\ov{B}$ and a holomorphic family of germs at $0\in\C^{n-1}$ of holomorphic vector fields $(Y_\tau)_{\tau\in D_{r'}}$, with the following properties:
\begin{itemize}
\item[(a).] $\Phi_0(\Ga')=\Ga'$ and for any $\tau\in D_{r'}$ then $\Phi_\tau(\Ga')\sub K(\eta_\tau)$, where $\Phi_\tau(z)=\Phi(z,\tau)$.
\item[(b).] $Y_\tau$ is the normal type of $\eta_\tau$ along $\Phi_\tau(\Ga')$ and has a non-degenerate singularity at $0\in \C^n$, $\forall\tau\in D_{r'}$.
\end{itemize}
\end{proposition}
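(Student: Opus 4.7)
The plan is to combine the local product structure of Proposition~\ref{p:27} with the holomorphic implicit function theorem with parameters. First, cover the compact curve $\Ga'$ by finitely many charts $U_1,\dots,U_N$ on which Proposition~\ref{p:27} applies to $\eta_0$, giving holomorphic coordinates $(y^i,t^i)\colon U_i\to\C^{n-1}\times\C$ in which
\[
\eta_0|_{U_i}=i_{Y_0^i(y^i)}\,dy_1^i\wedge\dots\wedge dy_{n-1}^i,\qquad \Ga'\cap U_i=\{y^i=0\},
\]
with $DY_0^i(0)$ invertible (by hypothesis (ii) together with the fact, noted in the remark after Proposition~\ref{p:27}, that the normal type is constant along $\Ga'$). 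In such coordinates the rotational $Z_0=rot(\eta_0)$ is tangent to $\Ga'$ and nowhere vanishing on it. Since $\tau\mapsto rot(\eta_\tau)$ is a holomorphic family of vector fields, $Z_\tau$ remains non-vanishing on a fixed neighborhood $V\supset\Ga'$ for all sufficiently small $|\tau|$.

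Next, produce the deformed Kupka curve $\Ga_\tau$ chart by chart. The equation $\eta_\tau(y^i,t^i)=0$ has invertible linearization $DY_0^i(0)$ in the $y^i$-direction at $y^i=0$, $\tau=0$, so the holomorphic implicit function theorem with parameter yields a unique holomorphic $\phi^i(t^i,\tau)$ with $\phi^i(\cdot,0)\equiv 0$ solving it for $|\tau|<r'$, some $r'\in(0,r)$. By local uniqueness the graphs $\{y^i=\phi^i(\cdot,\tau)\}$ patch into a global holomorphic curve $\Ga_\tau\sub V$, which is contained in $K(\eta_\tau)$ because $Z_\tau$ is non-vanishing on $V$. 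Define the isotopy by $\Phi_\tau(0,t^i):=(\phi^i(t^i,\tau),t^i)$ in each chart; the same uniqueness makes this well defined on overlaps, so $\Phi\colon\Ga'\times D_{r'}\to\ov B$ is holomorphic in each variable (hence $C^\infty$), satisfies $\Phi_0=\mathrm{id}_{\Ga'}$ and $\Phi_\tau(\Ga')=\Ga_\tau\sub K(\eta_\tau)$, giving (a).

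For (b), fix $p_0\in\Ga'$ and construct a holomorphic family of transverse $(n-1)$-dimensional sections $\Si_\tau$ through $\Phi_\tau(p_0)$ by flowing a fixed transverse slice under the non-singular holomorphic foliation generated by $Z_\tau$; the flow, and hence $\Si_\tau$, depends holomorphically on $\tau$ by the standard parameterized theorem for holomorphic ODE. Identifying $\Si_\tau$ holomorphically with a neighborhood of $0\in\C^{n-1}$, write $\eta_\tau|_{\Si_\tau}=i_{Y_\tau}\,dy_1\wedge\dots\wedge dy_{n-1}$, which defines the holomorphic family of germs $Y_\tau$. Non-degeneracy is an open condition on $DY_\tau(0)$ and holds at $\tau=0$, so it persists after shrinking $r'$. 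The main technical obstacle is precisely this step: Proposition~\ref{p:27} furnishes the normal form at each individual parameter, but upgrading the straightening to a holomorphic family in $\tau$ requires choosing the transverse sections and their identifications with $(\C^{n-1},0)$ coherently in $\tau$, which is exactly what the parameterized flow of $Z_\tau$ accomplishes.
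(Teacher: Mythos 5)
The paper does not actually prove this proposition: it is stated as ``a consequence of \cite{medeiros}'', so there is no internal proof to compare with and your argument has to stand on its own. Its architecture (local product structure, persistence of the singular curve, transversals moved by the flow of the rotational) is the right one, but there is a genuine gap at the central step. The equation $\eta_\tau(y^i,t^i)=0$ is not a square system to which the implicit function theorem applies: an $(n-2)$-form on $\C^n$ has $\binom{n}{2}$ coefficients, while you are solving for the $n-1$ unknowns $y^i$. At $\tau=0$ only the $n-1$ coefficients of the $dy^i$-monomials are present (they are the components of $Y_0^i$), and the IFT legitimately produces a curve on which \emph{those} coefficients vanish; but for $\tau\neq0$ the form $\eta_\tau$ acquires components involving $dt^i$, and nothing in your argument shows that these also vanish along the graph $y^i=\phi^i(t^i,\tau)$. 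Consequently you have not shown that your curve lies in $Sing(\eta_\tau)$ at all; the justification ``contained in $K(\eta_\tau)$ because $Z_\tau$ is non-vanishing'' only addresses the condition $d\eta_\tau\neq0$, not the condition $\eta_\tau=0$.

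The missing ingredient is that $\eta_\tau$ is a \emph{basic} form for the flow of $Z_\tau=rot(\eta_\tau)$ near $\Ga'$: integrability gives $i_{Z_\tau}\eta_\tau=0$, and $L_{Z_\tau}\eta_\tau=i_{Z_\tau}d\eta_\tau+d\,i_{Z_\tau}\eta_\tau=i_{Z_\tau}i_{Z_\tau}\nu=0$. Hence in a flow box where $Z_\tau=\partial/\partial s$ the form has no $ds$-component and $s$-independent coefficients, so on a transversal $\Si$ it is $i_{W_\tau}\,dy_1\wedge\dots\wedge dy_{n-1}$ with $W_0=Y_0$, and $Sing(\eta_\tau)$ near $\Ga'$ is exactly the $Z_\tau$-saturation of $Sing(W_\tau)$. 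Now the IFT applied to the square system $W_\tau=0$ (invertible Jacobian $DY_0(0)$) gives both the persistence of the Kupka curve, its full containment in $Sing(\eta_\tau)$, and the holomorphic family of normal types $Y_\tau=W_\tau$ at once; this is essentially de Medeiros' argument. A secondary, fixable imprecision: your chartwise formula $\Phi_\tau(0,t^i)=(\phi^i(t^i,\tau),t^i)$ need not agree on overlaps as a \emph{map} (the transition functions mix $y$ and $t$, so only the image curves coincide); to get the isotopy one should, e.g., project $\Ga'$ onto $\Ga_\tau$ along a tubular neighborhood, which is why the statement only claims a $C^\infty$ isotopy.
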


\vskip.1in

Next we will describe the Kupka set of a foliation $\fa=f^*\G$ on $\p^n$, where $f\colon\p^n-\to\p^{n-1}$, $\G\in\M(d)$ and $(f,\G)$ is a generic pair.
Let $Sing(\G)=\{q_1,...,q_N\}$ and recall that, by definition, $q_1,...,q_N$ are all singularities of Kupka type of $\G$.
Set $V_{q_i}=\overline{f^{-1}(q_i)}$, $1\le i\le N$. 
Observe first, that  $V_{q_i}$ is a smooth algebraic curve of $\mathbb P^n$ that contains $I(f)$.
In fact, it is the transverse intersection of $n-1$ hypersurfaces because the pair $(f,\G)$ is generic.
For fixed $i\in\{1,...,N\}$ let $Y_i$ be a holomorphic vector field representing $\G$ in a neighborhood of $q_i$. Since $\G\in \M(d)$, $q_i$ is a non-degenerate singularity of $Y_i$ of Kupka type; $tr(DY_i(q_i))\ne0$.
\begin{lemma}\label{l:210}
{\rm In the above situation we have:
\begin{itemize}
\item{} $V_{q_i}\setminus I(f)\,\sub K(f^*(\G))$, $\forall i$.
\item{} The normal type of $f^*(\G)$ along $V_{q_i}\setminus I(f)$ is equivalent to $Y_i$, $\forall i$.
\end{itemize}}
\end{lemma}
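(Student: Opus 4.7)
The plan is to argue locally at each point $p \in V_{q_i} \setminus I(f)$ by adapting $f$ to a linear projection and then reading off the Kupka condition directly from the pulled-back form.

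First I would exploit the generic pair hypothesis. Since $(f,\G)$ is a generic pair, $Sing(\G)\cap C(f)=\emptyset$, so $q_i\notin C(f)$; equivalently, every point of $f^{-1}(q_i)\cap U(f)=V_{q_i}\setminus I(f)$ is a regular point of $f$, where the differential has maximal rank $n-1$. Fix any $p\in V_{q_i}\setminus I(f)$. By the rank theorem, there exist local holomorphic coordinates $(y,t)=(y_1,\dots,y_{n-1},t)$ on a neighborhood $U$ of $p$ (with $p\mapsto 0$) and $u=(u_1,\dots,u_{n-1})$ near $q_i$ (with $q_i\mapsto 0$) in which $f$ becomes the projection $(y,t)\mapsto y$.

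Next I would transport the defining form. In the $u$-chart, $\G$ is represented by a holomorphic vector field $Y_i=\sum_{j=1}^{n-1}Y_{i,j}(u)\frac{\partial}{\partial u_j}$ with $Y_i(0)=0$ and $tr(DY_i(0))\ne0$ (since $q_i$ is a Kupka singularity of $\G$), and by the $(n-2)$-form $\omega_i=i_{Y_i}(du_1\wedge\dots\wedge du_{n-1})$. In the chosen coordinates, $f^*\omega_i$ is (up to a non-vanishing factor accounting for the passage from homogeneous to affine coordinates) represented by
\[
\eta \;=\; i_{Y_i(y)}\bigl(dy_1\wedge\dots\wedge dy_{n-1}\bigr),
\]
independent of $t$. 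Clearly $\eta(0,0)=0$. Taking $d\eta$, only the derivatives in the $y$-directions contribute, giving
\[
d\eta \;=\; \Bigl(\sum_{j=1}^{n-1}\partial_{y_j}Y_{i,j}(y)\Bigr)\,dy_1\wedge\dots\wedge dy_{n-1} \;=\;(div\,Y_i)(y)\,dy_1\wedge\dots\wedge dy_{n-1},
\]
and at $p=(0,0)$ this is $tr(DY_i(0))\,dy_1\wedge\dots\wedge dy_{n-1}\ne0$. Therefore $\eta(p)=0$ and $d\eta(p)\ne0$, which is exactly the Kupka condition for $\fa=f^*\G$ at $p$. This proves $V_{q_i}\setminus I(f)\subset K(\fa)$.

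For the second statement, the expression $\eta=i_{Y_i(y)}(dy_1\wedge\dots\wedge dy_{n-1})$ is already in the product normal form of Proposition \ref{p:27}, with the $t$-direction tangent to $V_{q_i}$ locally and the $y$-direction transverse. By definition, the normal type of $\fa$ at $p$ is the vector field $Y$ appearing in that product form, which here is literally $Y_i$. Since $p\in V_{q_i}\setminus I(f)$ was arbitrary, the normal type of $\fa$ along $V_{q_i}\setminus I(f)$ is equivalent to $Y_i$, as claimed. The only subtlety worth checking carefully is the passage from the homogeneous representative $\eta_{[f,\G]}$ on $\C^{n+1}$ to the local affine form $\eta$ above; this amounts to dividing by a non-vanishing factor on $U$, which affects neither the Kupka condition nor the normal type (as recalled in the discussion preceding Proposition \ref{p:27}).
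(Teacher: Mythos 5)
Your proof is correct and follows essentially the same route as the paper's: use genericity to see that $q_i$ is a regular value, straighten $f$ to the projection $(y,t)\mapsto y$, pull back $i_{Y_i}(du_1\wedge\dots\wedge du_{n-1})$, and read off the Kupka condition and the normal type from $d\eta(p)=tr(DY_i(0))\,dy_1\wedge\dots\wedge dy_{n-1}\ne 0$. Your explicit remark that the passage from the homogeneous representative to the local affine one only changes the form by a non-vanishing factor is a welcome detail the paper leaves implicit.
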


{\it Proof.}
Fix $i\in\{1,...,N\}$ and $p\in V_{q_i}\setminus I(f)$. Since $q_i$ is a regular value of $f|_{U(f)}$, then $f$ is a submersion in a neighborhood of $p$.  Hence, there exist local analytic coordinate systems $(U,y,t)$,
$y:U\to\C^{n-1}$, $t:U\to\C$, and $(V,u)$, $u:V\to\C^{n-1}$, at $p$ and $q_i=f(p)$ respectively, such that $f(U)\sub V$, $u(q_i)=0\in\C^{n-1}$, $f(y_1,y_2,...,y_{n-1},t)=(y_1,y_2,...,y_{n-1})$ and $\G$ is represented in $V$ by a vector field $Z$ holomorphically equivalent near $0\in\C^{n-1}$ to $Y_i$ near $q_i$. In particular, $det(DZ(0))\ne0$ and $tr(DZ(0))\ne0$.
Therefore, if $Z=\sum_{j=1}^{n-1}Z_j(u)\frac{\pa}{\pa u_j}$ then $\G$ is represented on $V$ by the $(n-1)$-form
$\om=i_Z\,du_1\wedge...\wedge du_{n-1}$. Since $f(y,t)=y$ the form $\eta:=f^*(\om)$ has essentialy the same expression as $\om$:
\[
\eta=f^*\om=i_{Z(y)}\,dy_1\wedge...\wedge dy_{n-1}\,\,.
\]
In particular, we get $d\eta(p)=tr(DZ(0))\,dy_1\wedge...\wedge dy_{n-1}$ and $rot(\eta)=tr(DZ(0))\,\frac{\pa}{\pa t}\ne0$.
Therefore, $p\in K(f^*(\G))$ and the normal type of $f^*(\G)$ is equivalent to the germ of $Z$ at $0$, which proves lemma \ref{l:210}.
\qed
   
\subsection{Conic singularities}\label{ss:25}
The purpose of this section is to describe the so called {\it conic} singularities in the case of two dimensional foliations.
We begin by proving that a pull-back foliation $f^*(\G)$, where $f\colon\p^n-\to\p^{n-1}$ and $(f,\G)$ is a generic pair, has a conic structure near a point $p\in I(f)$.

Fix a point $p\in I(f)$ and let $\wt{p}\in\Pi_n^{-1}(p)$. Without lost of generality we can assume that $\wt p=(1,0,...,0)\in\C^{n+1}$.
Let $\wt f=(F_0,...,F_{n-1})\colon\C^{n+1}\to\C^n$ be the homogeneous lifting of $f$. Since $f$ is generic, $\wt{f}$ is a submersion at $\wt{p}\,$; there exists a local coordinate system $(U,x=(x_0,...,x_n)\in \C^n)$ around $\wt p$ such that 
\[
\wt f(x_0,x_1,...,x_n)=(x_1,...,x_n)\,\,\implies\,\,f[1:x_1:...:x_n]=[x_1:...:x_n]\in\p^{n-1}\,\,.
\]
In other words, in the affine chart $[1:x]\simeq x\in\C^n\sub\p^n$, the map $f$ is the canonical projection
$x\in\C^n\setminus\{0\}\mapsto [x]\in\p^{n-1}$. In particular, the pull-back foliation $f^*(\G)$ is defined in these coordinates by an integrable  $(n-2)$-form $\eta$ with homogeneous coefficients of degree $d+1$ and such that $i_R\eta=0$, $R$ the radial vector field on $\C^n$.
In fact, the form $\eta$ defines $\G$ in homogeneous coordinates.

In the next lemma we study the rotational of a form $\eta$ defining in homogeneous coordinates a foliation on $\p^{n-1}$.
Let $\G$ be a foliation of degree $d\ge2$ on $\p^{n-1}$ and $\eta$ be a $(n-2)$-form on $\C^n$, with homogeneous coordinates of degree $d+1$, defining $\Pi_{n-1}^*(\G)$. Let $X:=rot(\eta)$; $d\eta=i_X\nu$, $\nu=dx_1\wedge...\wedge dx_n$. 

\begin{lemma}\label{l:211}
{\rm Assume that all singularities of $\G$ are non-degenerate. Then $0\in\C^n$ is an isolated singularity of $X$ if, and only if, $\G\in\M(d)$.}
\end{lemma}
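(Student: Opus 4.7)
The plan is to write $\eta = i_R i_Y \nu$, where $Y = \sum_{i=1}^n Y_i(x)\,\partial/\partial x_i$ is a homogeneous polynomial vector field of degree $d$ on $\mathbb{C}^n$ representing $\G$, and then compute $X = rot(\eta)$ explicitly. Using $L_R\nu = n\nu$, $[R,Y] = (d-1)Y$ and Cartan's formula $d\,i_R = L_R - i_R d$, one gets
\[
d\eta = L_R(i_Y\nu) - i_R d(i_Y\nu) = (d+n-1)\,i_Y\nu - div(Y)\,i_R\nu,
\]
so $X = (d+n-1)Y - div(Y)R$. This identity is the engine of the whole argument.

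Next I would exploit that $X$ is homogeneous of degree $d$, so $Sing(X)$ is a cone and $X(0)=0$. For $v\neq 0$, the equation $X(v)=0$ forces $Y(v)$ to be proportional to $R(v)=v$, say $Y(v)=\lambda v$, which is exactly the condition that $q:=\Pi_{n-1}(v)$ is a singularity of $\G$; and moreover it requires $div(Y)(v) = (d+n-1)\lambda$. By homogeneity, if this is satisfied at one such $v$ then $X$ vanishes along the whole line $\C\cdot v$. Hence the dichotomy ``$0$ isolated in $Sing(X)$ vs.\ not'' is equivalent to the nonexistence vs.\ existence of a singularity $q$ of $\G$ for which $div(Y)(v) = (d+n-1)\lambda$ at a representative $v\in\Pi_{n-1}^{-1}(q)$.

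The key step is then to identify this last condition with the failure of the Kupka property at $q$. I would take $v=(1,0,\dots,0)$ without loss of generality, so $Y(v)=(\lambda,0,\dots,0)$, and compute the representative $Z$ of $\G$ in the affine chart $x_1=1$ via $Z_j(y) = Y_j(1,y) - y_j Y_1(1,y)$ for $j\geq 2$. A direct differentiation gives $DZ(0)_{jk} = \partial Y_j/\partial x_k(v) - \delta_{jk}\lambda$, so
\[
tr(DZ(0)) = \sum_{j=2}^n \partial Y_j/\partial x_j(v) - (n-1)\lambda.
\]
Applying Euler's identity $\sum_i x_i\,\partial Y_k/\partial x_i = dY_k$ at $v$ yields $\partial Y_1/\partial x_1(v)=d\lambda$, hence $\sum_{j=2}^n \partial Y_j/\partial x_j(v) = div(Y)(v) - d\lambda$, and therefore
\[
tr(DZ(0)) = div(Y)(v) - (d+n-1)\lambda.
\]
So $X(v)=0$ with $v\neq0$ is equivalent to $tr(DZ(0))=0$, i.e.\ $q$ is a non-Kupka singularity of $\G$.

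Putting everything together: under the non-degeneracy hypothesis, $Sing(\G)$ consists of finitely many isolated points, and $0$ is an isolated singularity of $X$ if and only if none of these singularities is non-Kupka, which is precisely condition (3) in the characterisation of $\M(d)$ (the remaining conditions defining $\M(d)$ being the open-dense genericity already in force). The main obstacle is the bookkeeping in the affine-chart trace computation and the correct normalisation of Euler's relation; the rest is a direct application of the homogeneity of $X$.
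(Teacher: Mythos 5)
Your proof is correct, and it establishes the same equivalence that the paper's proof actually delivers --- namely that $0$ is isolated in $Sing(X)$ if and only if every singularity of $\G$ is of Kupka type --- but by a genuinely different route. The paper argues structurally: it first proves the Euler-type identity $(n+d-1)\,\eta=i_R\,d\eta$, deduces $Sing(X)=Sing(d\eta)\subset Sing(\eta)=\{0\}\cup\Pi_{n-1}^{-1}(Sing(\G))$, and then invokes the local product structure along each line $\ell=\Pi_{n-1}^{-1}(q)$ (the computation already carried out in the proof of Lemma \ref{l:210}, where $d\eta(p)=tr(DZ(0))\,dy_1\wedge\dots\wedge dy_{n-1}$) to conclude that $d\eta$ vanishes on $\ell\setminus\{0\}$ exactly when $q$ fails to be Kupka. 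You instead compute the rotational in closed form, $X=(d+n-1)Y-\mathrm{div}(Y)R$, and match the residual condition $\mathrm{div}(Y)(v)=(d+n-1)\lambda$ with $tr(DZ(0))=0$ by an explicit affine-chart computation plus Euler's relation; I checked the identity $tr(DZ(0))=\mathrm{div}(Y)(v)-(d+n-1)\lambda$ and it is right, and your formula for $X$ is invariant under the ambiguity $Y\mapsto Y+gR$, as it must be. Your version is more elementary and makes $Sing(X)$ completely explicit (it also shows directly that $Sing(X)\setminus\{0\}$ is contained in the union of the lines over $Sing(\G)$, since $X(v)=0$ forces $Y(v)\in\C v$), at the cost of some bookkeeping; the paper's version reuses the normal-type computation of Lemma \ref{l:210}. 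One small remark, which applies equally to the paper's own proof: the ``only if'' direction as you prove it yields only condition (3) in the description of $\M(d)$ (all singularities of Kupka type), not hyperbolicity or the absence of invariant algebraic curves; your closing sentence glosses over this in exactly the way the original does, so it is not a gap relative to the paper, but the honest statement of what is proved is the equivalence with the Kupka condition.
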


{\it Proof.}
We will use the identity:
\begin{equation}\label{eq:21}
(n+d-1)\,\eta=i_R\,d\eta\,\,.
\end{equation}
Let us prove (\ref{eq:21}). Since the coefficients of $\eta$ are homogeneous of degree $d+1$ we have $L_R\eta=(n+d-1)\,\eta$. On the other hand, from $i_R\eta=0$ we get
\[
(n+d-1)\,\eta=L_R(\eta)=i_R\,d\eta+d(i_R\eta)=i_R\,d\eta\,\,.
\]
Note that relation (\ref{eq:21}) implies $Sing(X)=Sing(d\eta)\sub Sing(\eta)$.
Since $\eta$ represents $\G$ in homogeneous coordinates, we have
\[
Sing(\eta)=\{0\}\cup \Pi_{n-1}^{-1}(Sing(\G))\,\,.
\]

On the other hand, if $q\in Sing(\G)$ then $\Pi_{n-1}^{-1}(q)$ is a line $\ell$ through $0\in\C^n$ and, as we have seen in the proof 
of lemma \ref{l:210} the normal type of $\eta$ along $\ell$ coincides with the analytic type of a germ vector field representing $\G$ at $q$.
Therefore, if $\G\in\M(d)$ then $\ell\setminus\{0\}\sub K(\eta)$ and $d\eta(p)\ne0$, $\forall p\in\ell\setminus\{0\}$, so that $0$ is an isolated singularity of $d\eta$. Conversely, if $0$ is an isolated singularity of $d\eta$ then $\ell\setminus\{0\}\sub K(\eta)$ and $q$ is a singularity of Kupka type of $\G$.
\qed

The next definition can be found in \cite{ln3}.
\begin{definition}
Let $\eta$ be a germ at $0\in\C^n$ of integrable $(n-2)$-form with a singularity at $0$, $\eta(0)=0$. We say that $0$ is a {\it generalized Kupka singularity} (briefly: GK singularity) if it is an isolated singularity of $d\eta$, or equivalently, an isolated singularity of the rotational, $X=rot(\eta)$. We say that $0$ is a NGK (nilpotent generalized Kupka) singularity if $DX(0)$ is nilpotent.
\end{definition}
The following result is proved in \cite{ln3}:

\begin{theorem}\label{t:213}
Let $\eta$ be a germ at $0\in\C^n$ of an integrable $(n-2)$-form with a NGK singularity at $0$ and $X=rot(\eta)$. Then there is a holomorphic germ of vector field $Y$ at $0\in\C^n$ such that:
\begin{itemize}
\item[(a).] $\eta=i_Y\,i_X\,\nu$, where $\nu=dz_1\wedge...\wedge dz_n$.
\item[(b).] The eigenvalues of $DY(0)$ are all rational and positive and $tr(DY(0))<1$.
\end{itemize}
Furthermore, there exists a holomorphic coordinate system in which we can take $Y=S+N$, where $S$ is linear semi-simple, $N$ and $X$ are polynomial vector fields, $DN(0)$ is nilpotent and they satisfy $[S,N]=0$, $[N,X]=0$ and $[S,X]=\la\,X$, $\la=1-tr(S)>0$. 
\end{theorem}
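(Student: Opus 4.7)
First, I would establish part (a) via the local product structure combined with a vanishing-cohomology argument. Since $0$ is the only zero of $X=\operatorname{rot}(\eta)$ (by the NGK hypothesis), at every $p\in\C^n\setminus\{0\}$ Proposition~\ref{p:27} shows $X$ is tangent to $\fa_\eta$, equivalently $i_X\eta\equiv 0$. The linear map $Z\mapsto i_Z i_X\nu$ has kernel $\O\cdot X\cong\O$ and image $\ker\bigl(i_X\colon\Omega^{n-2}\to\Omega^{n-3}\bigr)$, giving on $\C^n\setminus\{0\}$ a short exact sequence
\begin{equation*}
0\longrightarrow\O\longrightarrow T\C^n\longrightarrow\ker(i_X)\longrightarrow 0.
\end{equation*}
Since $\eta|_{\C^n\setminus\{0\}}$ is a section of the right-hand sheaf, the obstruction to lifting it to a holomorphic $Y$ lies in $H^1(\C^n\setminus\{0\},\O)$, which vanishes for $n\geq 3$. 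Thus $Y$ exists on $\C^n\setminus\{0\}$ with $\eta=i_Y i_X\nu$, and by Hartogs' theorem it extends across the origin, proving (a).

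Second, I would derive the fundamental bracket identity. Applying $d$ to $\eta=i_Y i_X\nu$, using the Cartan identities $L_Y i_X=i_{[Y,X]}+i_X L_Y$ and $L_X\nu=\operatorname{div}(X)\,\nu$, together with $\operatorname{div}(X)=0$ (a consequence of $d^2\eta=0$), yields
\begin{equation*}
[Y,X]=\bigl(1-\operatorname{div}(Y)\bigr)\,X.
\end{equation*}
Setting $A:=DY(0)$ and $B:=DX(0)$ and comparing degree-one Taylor terms gives
\begin{equation*}
[A,B]=\bigl(\operatorname{tr}(A)-1\bigr)\,B=-\la\,B,\qquad \la:=1-\operatorname{tr}(A),
\end{equation*}
so $B$ is an $\operatorname{ad}_A$-eigenvector with eigenvalue $-\la$.

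Third, for the eigenvalue information in~(b) and the normal form, I would take the Jordan decomposition $A=S+N_{\operatorname{lin}}$ with $S$ semisimple and $N_{\operatorname{lin}}$ nilpotent commuting with $S$. Since $\operatorname{ad}_S$ and $\operatorname{ad}_{N_{\operatorname{lin}}}$ are the semisimple and nilpotent parts of $\operatorname{ad}_A$ on the space of matrices, the identity $\operatorname{ad}_A B=-\la B$ splits as $\operatorname{ad}_S B=-\la B$ and $\operatorname{ad}_{N_{\operatorname{lin}}}B=0$. In eigencoordinates for $S=\sum_i\la_i\,z_i\,\pa/\pa z_i$, the Poincar\'e--Dulac procedure for $X$ yields a formal normal form in which the surviving monomials $z^\alpha\,\pa/\pa z_i$ satisfy the resonance $\langle\alpha,\la_S\rangle-\la_i=\la=1-\sum_k\la_k$. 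The NGK assumption (isolated zero of $X$) forces the ideal generated by these resonant components to be of finite codimension, which imposes sufficiently many integer-coefficient relations on $(\la_1,\dots,\la_n)$ to force each $\la_i\in\mathbb Q$; a non-positive $\la_i$ would produce a coordinate axis contained in $\operatorname{Sing}(X)$, contradicting isolation, so every $\la_i>0$ and $\la=1-\operatorname{tr}(S)>0$. Because the $\la_i$ are positive rationals, only finitely many monomials are resonant, so the Poincar\'e--Dulac normalization converges; truncating yields the polynomial normal form $Y=S+N$ with $X$ polynomial, and the relations $[S,N]=0$ (Jordan), $[N,X]=0$ (vanishing of $\operatorname{ad}_{N_{\operatorname{lin}}}B$ propagated along the normalization), and $[S,X]=\la X$ ($\operatorname{ad}_S$-homogeneity of the resonant terms).

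I expect the principal difficulty to lie in the rationality and positivity argument for the eigenvalues of $S$: one must extract enough linear constraints from the NGK hypothesis while carefully intertwining the integrability of $\eta$ with the resonance pattern of $X$ under $\operatorname{ad}_S$. This is the analytic-arithmetic core of the theorem and relies on the structural analysis developed in \cite{ln3}; everything else — Hartogs extension, derivation of the bracket identity, Jordan decomposition, and truncation of the Poincar\'e--Dulac normal form — is comparatively routine once this step is in place.
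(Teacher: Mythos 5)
First, a point of reference: the paper does not actually prove Theorem~\ref{t:213} --- it states it and attributes the proof to \cite{ln3} --- so there is no internal argument to compare yours against, and the proposal must be judged on its own. Within it, part (a) and the bracket identity are essentially correct and standard: $i_X\eta=0$ off the origin via the local product structure, the exact sequence $0\to\O\to T\C^n\to\ker(i_X)\to0$ on the punctured ball, $H^1(B\setminus\{0\},\O)=0$ for $n\ge3$, and Hartogs give (a) (this is exactly the de Rham division the paper itself invokes before Corollary~\ref{c:217}); and from $i_Y\eta=0$, $i_Yd\eta=\eta$ one gets $L_Y\eta=\eta$, hence $L_Y(i_X\nu)=i_X\nu$ and $[Y,X]=(1-\operatorname{div}Y)\,X$. (One omission here: you should first rule out $Y(0)\neq0$; if $Y(0)\neq0$, rectifying $Y=\partial/\partial z_1$ turns the bracket relation into $\partial X/\partial z_1=X$, so $X=e^{z_1}X(0,z')$ and the $z_1$-axis lies in $\operatorname{Sing}(X)$, contradicting isolation. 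Only then do the eigenvalues of $DY(0)$ make sense.)

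The genuine gap is where you locate it, but it is larger than you allow. (1) The linearized identity $[A,B]=-\la B$ with $B=DX(0)$ carries no information in the relevant situation: NGK forces $B$ nilpotent, and in the conic case ($X$ homogeneous of degree $d\ge2$) one has $B=0$, so splitting $\operatorname{ad}_AB$ into semisimple and nilpotent parts is vacuous and gives no handle on the eigenvalues of $A$; any proof of rationality and positivity must engage the higher jets of $X$ through the full relation $[Y,X]=(1-\operatorname{div}Y)X$, not its linearization. (2) The three assertions doing all the work --- that finite codimension of the resonant ideal forces each $\la_i\in\mathbb{Q}$, that a non-positive $\la_i$ would put a coordinate axis inside $\operatorname{Sing}(X)$, and above all that $\operatorname{tr}(S)<1$ --- are stated, not proved; in particular positivity of the $\la_i$ gives no upper bound on $\sum_i\la_i$, and you offer no mechanism for $\la=1-\operatorname{tr}(S)>0$. (3) Even granting (b), passing from $[Y,X]=(1-\operatorname{div}Y)X$, where $\operatorname{div}Y$ is a nonconstant function, to the exact relations $[S,X]=\la X$ and $[N,X]=0$ with $\la$ constant requires a simultaneous normalization of $Y$ and $X$ and control of $\operatorname{div}N$, which is not carried out. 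In short, what you defer as ``the analytic--arithmetic core'' is the theorem itself; the proposal reproduces the routine outer layers but does not prove the statement, whose actual proof is the content of \cite{ln3}.
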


\begin{rem}
{\rm Since the eigenvalues of $S$ are rational positive they can be written as $\frac{p_1}{q},...,\frac{p_n}{q}$, where $p_1,...,p_n,q\in\N$ and $p_1,...,p_n$ are relatively primes. Note that the number $\wt\la:=q\,\la=q-q.\,tr(S)$ is a positive integer. If we set $\wt{S}=q\,S$ then $[\wt{S},X]=\wt\la\,X$. This relation says that the vector field $X$ is quasi-homogeneous with weights $p_1,...,p_n$. 
In this case, we will say that the singularity is NGK of type $(p_1,...,p_n:\wt\la)$.

Observe that when the weights are $p_1=...=p_n=1$ then $\wt{S}=R$, the radial vector field, and $X$ has homogeneous coefficients of degree $d=\wt\la+1$. A consequence of Lemma \ref{l:211} is that if $\G\in\M(d)$ then the form $\eta$ that represents $\G$ in homogeneous coordinates has a NGK singularity of type $(1,...,1;d-1)$ at $0\in\C^n$.
A nilpotent singularity of this type will be called a NGK {\it conic} singularity of degree $d$.}
\end{rem}

\begin{rem}\label{r:215}
{\rm In the statement of theorem \ref{t:213} the vector field $Y$ such that $\eta=i_Yi_X\nu$ can be decomposed as $Y=S+N$, where $S$ is semi-simple and $[S,N]=0$, $[N,X]=0$ and $[S,X]=\la\,X$, $\la=1-tr(S)$. In fact, the vector field $N$ vanishes if we assume that $X$ satisfies a Zariski open condition (proposition 3 of \cite{ln3}).

In the case of a conic NGK singularity of degre $d\ge2$, $N$ is necessarily linear nilpotent and this condition is Zariski open and dense. In fact, in \cite{ln3} it is proved that there exists a Zariski open and dense subset $\U(d)\sub\fol(d;1,n-1)$ such that for any $\G\in\U(d)$, if $\eta$ represents $\G$ in homogeneous coordinates, $X=rot(\eta)$, $N$ is linear nilpotent and $[X,N]=0$ then $N=0$. We will use the notation $\M'(d):=\M(d)\cap\U(d)$ and $\M'_g(d)=\M_g(d)\cap\U(d)$.}
\end{rem}

Another result proved in \cite{ln3} is the persistence of nilpotent singularities under deformation.
Let $(\eta_t)_{t\in B_r}$ be a holomorphic family of integrable $(n-2)$-forms on an open set of $U\sub \C^n$, where $B_r=\{t\in\C^m\,;\,||t||<r\}$. 
Assume that $\eta_0$ has a nilpotent singularity of type $(p_1,...,p_n;\la)$ at some point $q\in U$ .

\begin{theorem}\label{t:216}
In the above situation, there exist $0<r'<r$ and a holomorphic map $Q\colon B_{r'}\to U$ with the following properties:
\begin{itemize}
\item{} $Q(0)=q$ and $Q(t)$ is a nilpotent singularity of type $(p_1,...,p_n;\la)$ of $\eta_t$ for all $t\in B_{r'}$.
\end{itemize}
\end{theorem}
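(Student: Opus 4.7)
The main difficulty I see is that $DX_0(q)$ is nilpotent for $X_0 := rot_\nu(\eta_0)$, so the implicit function theorem cannot be used directly to track the singularity of $X_t := rot_\nu(\eta_t)$ near $q$. My plan is to bypass this by introducing an auxiliary vector field $Y_t$ from a parametric version of Theorem \ref{t:213} --- whose linearisation at $q$ \emph{is} invertible --- and to track the zero of $Y_t$ instead.

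Concretely, Theorem \ref{t:213} gives, at $t = 0$, a vector field $Y_0 = S_0 + N_0$ with $\eta_0 = i_{Y_0} i_{X_0} \nu$, where $S_0$ is linear semisimple with positive rational eigenvalues $p_j/q$, $N_0$ is polynomial nilpotent, $[S_0, N_0] = [N_0, X_0] = 0$, and $[S_0, X_0] = \lambda X_0$. The first step is to build a holomorphic family $Y_t = S_t + N_t$ with the same commutation identities and $\eta_t = i_{Y_t} i_{X_t} \nu$, for $t$ in a small disc $B_{r'}$, specialising to $Y_0$ at $t=0$. Once this is done, $DY_0(q) = S_0$ is invertible, and applying the implicit function theorem to $Y_t(z) = 0$ at $(q,0)$ produces the required holomorphic $Q\colon B_{r'} \to U$ with $Q(0) = q$ and $Y_t(Q(t)) = 0$.

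To finish, $\eta_t(Q(t)) = 0$ follows immediately from $\eta_t = i_{Y_t} i_{X_t} \nu$, and evaluating $[S_t, X_t] = \lambda X_t$ at $Q(t)$ (where $S_t$ vanishes) yields $(DS_t(Q(t)) - \lambda I)\,X_t(Q(t)) = 0$, which forces $X_t(Q(t)) = 0$ in the non-resonant case; the resonant case, when $\lambda$ coincides with some $p_j/q$, is settled by the analogous identity for $N_t$, whose action on the $\lambda$-eigenspace is nilpotent. Type preservation is then automatic: the eigenvalues of $DS_t(Q(t))$ depend holomorphically on $t$ and take rational values at $t = 0$, hence are constant equal to $p_j/q$; in particular $\lambda = 1 - \operatorname{tr}(S_t)$ is preserved, and $Q(t)$ is an NGK singularity of $\eta_t$ of the same type.

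The hard part will be the parametric lifting of Theorem \ref{t:213}. Its proof in \cite{ln3} rests on non-canonical choices --- for instance a complement in a certain quotient module of vector fields modulo the ideal generated by $\eta$ --- and the plan requires these choices to be made holomorphically in $t$. The strategy I would pursue is to exploit uniqueness: given the rigid rational spectrum of $S_0$, a Poincar\'e--Dulac-type normalisation should conjugate $S_t$ to $S_0$ in a holomorphically unique manner, after which $N_t$ is pinned down by the commutation relations $[S_t, N_t] = 0$ and $[N_t, X_t] = 0$. Turning this uniqueness into holomorphic dependence on $t$ is the crux of the argument.
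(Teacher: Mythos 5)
First, note that the paper does not actually prove Theorem~\ref{t:216}: it is imported from \cite{ln3} (``Another result proved in \cite{ln3} is the persistence of nilpotent singularities\dots''), so there is no internal proof to measure you against. Your central idea --- do not try to follow the zero of the nilpotent rotational $X_t=rot(\eta_t)$, but instead follow the zero of an auxiliary field $Y_t$ with $\eta_t=i_{Y_t}i_{X_t}\nu$, whose linear part at $q$ is invertible --- is the right one, and it is exactly the mechanism the paper exploits in Corollary~\ref{c:217}. The first genuine gap is at what you yourself call the crux: you do not need, and should not try to build, a full parametric normal form $Y_t=S_t+N_t$ satisfying all the commutation identities of Theorem~\ref{t:213} for every $t$. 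The uniqueness you hope to lean on is not available: $Y_t$ is only determined modulo multiples of $X_t$, and Poincar\'e--Dulac normalisations with rational (hence resonant) spectra are not unique. What suffices --- and what the paper itself uses at (\ref{eq:22}) --- is the parametric de Rham division theorem \cite{DR}: since $X_t$ has isolated zeros near $q$ for small $t$, one gets a holomorphic family $Y_t$ with $\eta_t=i_{Y_t}i_{X_t}\nu$ and no normal form. Any such $Y_0$ differs from the field $S+N$ of Theorem~\ref{t:213} by a multiple of $X_0$, so $DY_0(q)$ still has the positive spectrum of $S$ and the implicit function theorem applies. Moreover, the relation $[Y_t,X_t]=(1-\mathrm{div}\,Y_t)\,X_t$ is then automatic (apply $L_{Y_t}$ to $d\eta_t=i_{X_t}\nu$), and evaluating it at $Q(t)$ shows that $X_t(Q(t))$ would be an eigenvector of $DY_t(Q(t))$ with eigenvalue $tr(DY_t(Q(t)))-1$, which stays close to $-\la<0$ while the spectrum of $DY_t(Q(t))$ stays close to the positive eigenvalues of $S$; hence $X_t(Q(t))=0$. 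Your sign is off here, which is what manufactured the spurious ``resonant case''.

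The second gap is type preservation. The assertion that ``the eigenvalues of $DS_t(Q(t))$ depend holomorphically on $t$ and take rational values at $t=0$, hence are constant'' is false as a principle: $t\mapsto 1+t$ is holomorphic and rational at $t=0$. To get constancy you must combine continuity of the spectrum with a discreteness constraint on the admissible types, and for that you first have to verify facts you have not addressed: that $Q(t)$ is an isolated zero of $d\eta_t$ (this follows from conservation of the multiplicity of $X_0$ at $q$ together with the $Y_t$-invariance of $Sing(d\eta_t)$, which forces $Sing(X_t)\cap\ov{B}=\{Q(t)\}$) and that $DX_t(Q(t))$ is nilpotent, so that Theorem~\ref{t:213} applies at $Q(t)$ and pins the eigenvalues of $DY_t(Q(t))$ to positive rationals with trace $<1$; a further invariant such as the (constant) Milnor number of $X_t$ at $Q(t)$ is then needed to make the set of possible spectra discrete. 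As written, the conclusion that $Q(t)$ has the same type $(p_1,\dots,p_n;\la)$ is asserted rather than proved.
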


Next, let us consider a holomorphic family $(\eta_t)_{t\in B_r}$ of  $(n-2)$-forms on a neighborhood of a closed ball $0\in\ov{B}\sub\C^n$ with boundary $\pa B$. Let us assume:
\begin{itemize}
\item{} $\eta_0$ has homogeneous coefficients of degree $d+1$ and represents a foliation $\G_0\in\M'(d)$.
\end{itemize}

In this case, we can write $\eta_0=\rho\,i_Ri_{X_0}\nu$, where $R$ is the radial vector field on $\C^n$, $X_0=rot(\eta_0)$, $\rho=1/(n+d-1)$ and $\nu=dz_1\wedge...\wedge dz_n$. As we have seen in lemma \ref{l:211} the vector field $X_0$ has an isolated singularity and $\eta_0$ a conic NGK singularity of degree $d$ at $0\in\C^n$.

Note also that the singular set of $\eta_0$ contains exactly $N=(d^n-1)/(d-1)$ straight lines through $0\in\C^n$, each line corresponding to a singularity of $\G_0$. 

Let $X_t=rot(\eta_t)$. Since $X_0|_{\pa B}$ doesn't vanishes, by taking a smaller $r$ if necessary, we can assume that $X_t|_{\pa B}$ also doesn't vanishes. In particular, $Sing(X_t)\cap \ov{B}$ is necessarily finite and so $cod(Sing(X_t))\ge3$.
By de Rham's division theorem (cf. \cite{DR}) there exists a holomorphic family of vector fields $(Y_t)_{t\in D_r}$, defined in a neighborhood of $\ov{B}$, such that $Y_0=\rho\,R$ and
\begin{equation}\label{eq:22}
\eta_t=i_{Y_t}i_{X_t}\nu\,\,,\,\,\forall\,t\in D_r\,\,.
\end{equation}
Given $q\in \ov{B}$ and $k\ge0$ we will denote by $j^k_q(\eta_t)$ the $k^{th}$-jet of $\eta_t$ at $q$. 
As a consequence of theorem \ref{t:216} and from the compactness of $\ov B$ we can state the following:

\begin{cor}\label{c:217}
In the above conditions, there exists $0<r'\le r$ and a holomorphic map $Q\colon B_{r'}\to B$, with $Q(0)=0$, and  such that:
\begin{itemize}
\item[(a).] $Sing(X_t)\cap\ov{B}=Sing(Y_t)\cap\ov{B}=\{Q(t)\}$, $\forall t\in B_{r'}$.
\item[(b).] $Q(t)$ is a conic NGK singularity of degree $d$ of $\eta_t$ for all $t\in B_{r'}$.
In parti-cular, $j^{\,d}_{Q(t)}(\eta_t)=0$ and $j^{\,d+1}_{Q(t)}(\eta_t)$, viewed as $(n-2)$-form with homogeneous coefficients, represents a foliation $\G_t\in\M'(d)$. In particular, the correspondence $t\in B_{r'}\mapsto \G_t\in\M'(d)$ is holomorphic.
\item[(c).] $DY_t(Q(t))=\rho\,R$ for all $t\in B_{r'}$, where $R$ is the radial vector field on $\C^n$. In particular, by Poincaré's linearization theorem $\rho^{-1}\,Y_t$ is holomorphically equivalent at $Q(t)$ to $R$.
\end{itemize}
\end{cor}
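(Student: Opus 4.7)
The plan is to extract (b) from Theorem~\ref{t:216} applied at the conic NGK singularity of $\eta_0$ at $0$, (a) from conservation of total multiplicity on $\ov{B}$ combined with the homogeneous normal form supplied by Theorem~\ref{t:213}, and (c) from the same normal form together with Remark~\ref{r:215} and Poincar\'e's linearization theorem.

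\emph{Part (b).} Since $\G_0\in\M'(d)$, Remark~\ref{r:215} identifies $0$ as a conic NGK singularity of $\eta_0$ of type $(1,\ldots,1;d-1)$. Theorem~\ref{t:216} yields $r'\in(0,r]$ and a holomorphic map $Q\colon B_{r'}\to B$ with $Q(0)=0$ such that $Q(t)$ is a NGK singularity of the same type for $\eta_t$. By definition this is equivalent to $j^{d}_{Q(t)}(\eta_t)=0$ together with $j^{d+1}_{Q(t)}(\eta_t)$ having homogeneous coefficients of degree $d+1$, hence representing in homogeneous coordinates a degree-$d$ foliation $\G_t$ on $\p^{n-1}$; the dependence $t\mapsto\G_t$ is manifestly holomorphic. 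Since $\M'(d)=\M(d)\cap\U(d)$ is open in $\fol(d;1,n-1)$ and contains $\G_0$, shrinking $r'$ further guarantees $\G_t\in\M'(d)$ for every $t\in B_{r'}$.

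\emph{Part (a).} By continuity, after shrinking $r'$, we may assume $X_t|_{\pa B}\ne 0$ and $Y_t|_{\pa B}\ne 0$ for all $t\in B_{r'}$ (possible because $X_0|_{\pa B}\ne 0$ by Lemma~\ref{l:211} and $Y_0=\rho R\ne 0$ on $\pa B$); in particular both $\mathrm{Sing}(X_t)\cap\ov B$ and $\mathrm{Sing}(Y_t)\cap\ov B$ are finite, and homotopy invariance of the degree of the Gauss map on $\pa B$ preserves
\[
\sum_{p\in \mathrm{Sing}(X_t)\cap\ov B}\mu(X_t,p)=\mu(X_0,0)=:\mu_0, \qquad \sum_{p\in \mathrm{Sing}(Y_t)\cap\ov B}\mu(Y_t,p)=\mu(Y_0,0)=1.
\]
Theorem~\ref{t:213} applied to $\eta_t$ at $Q(t)$ supplies local coordinates in which $X_t$ is polynomial and, via $[S_t,X_t]=\la_t X_t$ with $S_t=\rho R$, homogeneous of degree $d$ with $Q(t)$ as isolated zero. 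Since a homogeneous vector field with an isolated zero has no other zeros in those coordinates, the standard dimension count for the local algebra of a homogeneous system yields $\mu(X_t,Q(t))=d^n=\mu_0$; this saturates the global count, so $\mathrm{Sing}(X_t)\cap\ov B=\{Q(t)\}$. The analogous uniqueness for $Y_t$ follows from part (c) below, which provides $Y_t(Q(t))=0$ with $DY_t(Q(t))=\rho\,\mathrm{Id}$ non-degenerate, so that $\mu(Y_t,Q(t))=1$ exhausts the total count.

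\emph{Part (c).} Theorem~\ref{t:213} decomposes $Y_t=S_t+N_t$ near $Q(t)$ with $S_t$ linear semi-simple, $N_t$ polynomial with $DN_t$ nilpotent, $[S_t,N_t]=[N_t,X_t]=0$ and $[S_t,X_t]=\la_t X_t$. Conic type $(1,\ldots,1;d-1)$ forces $S_t=\rho R$ in suitable coordinates centered at $Q(t)$. Remark~\ref{r:215} then makes $N_t$ linear nilpotent, and the membership $\G_t\in\U(d)$ from part (b), combined with $[N_t,X_t]=0$, forces $N_t=0$. The de~Rham ambiguity $Y_t\mapsto Y_t+g X_t$ does not alter the linear part at $Q(t)$ (both $X_t$ and $DX_t$ vanish there, since $X_t$ is homogeneous of degree $d\ge 2$), hence $DY_t(Q(t))=\rho\,\mathrm{Id}=\rho R$. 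The eigenvalues of $\rho^{-1}DY_t(Q(t))$ are all $1$, which places $\rho^{-1}Y_t$ in the Poincar\'e domain, and the only possible resonance $1=m$ with $m\ge 2$ fails; Poincar\'e's linearization theorem produces a germ of biholomorphism at $Q(t)$ conjugating $\rho^{-1}Y_t$ to $R$. \emph{The main obstacle} is the local Milnor-number computation in part (a): establishing $\mu(X_t,Q(t))=\mu_0$ relies on the rigidity of the conic NGK normal form from Theorem~\ref{t:213}, without which the total multiplicity $\mu_0$ could split $X_t$ into several nearby zeros inside $\ov B$.
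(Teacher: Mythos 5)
Your proof is correct and follows the route the paper itself indicates: the paper gives no written proof of Corollary \ref{c:217}, presenting it as a direct consequence of Theorem \ref{t:216} together with the compactness of $\ov B$ and the normal form of Theorem \ref{t:213}/Remark \ref{r:215}, which is exactly the chain you elaborate (persistence of the conic NGK point for (b), conservation of the total multiplicity $d^n$ on $\ov B$ for (a), and the rigidity $Y_t=\rho R+N_t$ with $N_t=0$ plus the harmless de Rham ambiguity $Y_t\mapsto Y_t+gX_t$ for (c)). Your filled-in details, including the Poincar\'e-domain/non-resonance check, are consistent with the paper's intended argument.
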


\begin{rem}\label{r:218}
{\rm We would like to observe that the foliation $\G_t\in\M'(d)$ of corollary \ref{c:217} appears when we blow-up the point $Q(t)$. Indeed, if we denote the blow-up by $\pi\colon(\wt{B},E)\to(\ov B,Q(t))$ then the divisor $E$ is biholomorphic $\p^{n-1}$ and $\pi^*(\fa_t)$ extends to $\wt{B}$, the complex manifold obtained after the blow-up. On the other hand, by (c) of corollary \ref{c:217}, the radial vector field is tangent to $\fa_t$, and its strict transform by $\pi$ is transverse to $E$. In fact, it can be verified that $\pi^*(\fa_t)|_E\simeq \G_t$.}
\end{rem}

{\bf Notation.} 
In the situation of remark \ref{r:218} we will say that $\fa_t$ {\it represents $\G_t$ near $Q(t)$}.

\begin{rem}
{\rm Let $Y_t$ and $X_t$ be as in (\ref{eq:22}), so that $\eta_t=i_{Y_t}i_{X_t}\nu$ and $d\eta_t=i_{X_t}\nu$.
From these relations we get
\[
L_{Y_t}\eta_t=i_{Y_t}\,d\eta_t+d\left(i_{Y_t}\,\eta_t\right)=\eta_t\,\,.
\]
As a consequence of the above relation, the singular set of $\eta_t$ is $Y_t$-invariant. In fact, $Sing(\eta_t)$ is the closure of $N$ orbits of $Y_t$. 
Since $Y_t$ is equivalent at $Q(t)$ to a multiple of the radial vector field, the closure of each orbit is a smooth curve containing $Q(t)$.
On the other hand, since we are supposing that $Y_0=\rho\,R$, by continuity of the solutions of the flow of $Y_t$ with $t\in B_{r'}$, if $r'$ is small enough then the orbits of $Y_t$ are transverse to $\pa B$ and have as unique adherent point the point $Q(t)$. Hence, we can conclude that $Sing(\eta_t)\cap\ov B=\bigcup_{j=1}^N\ell_j(t)$, where:
\begin{itemize}
\item[(a).] $\forall\, j\in\{1,...,N\}$, $\ell_j(t)$ is the union of an orbit of $Y_t$ in $\ov B$ with $Q(t)$. In particular, $\ell_j(t)$ is transverse to $\pa B$ and biholomorphic to a closed disc.
\item[(b).] $\forall\, i\ne j$, $\ell_i(t)\cap\ell_j(t)=\{Q(t)\}$.
\item[(c).] $\forall\,j$, there exists a $C^\infty$ isotopy $\Phi^j\colon B_{r'}\times\ov\d\to\ov B$ such that $\Phi_0^j(\ov\d)=\ell_j(0)$ and $\Phi^j_t(\ov\d)=\ell_j(t)$, $\Phi^j(t,z)=\Phi^j_t(z)$.
\end{itemize} } 
\end{rem}

\section{Proof of theorem \ref{teob}}\label{ss:3}

\subsection{Plan of the proof.}\label{ss:31}
We begin by proving the theorem in the case of two dimensional foliations. In \S\,\ref{ss:33} we will prove that $PB(\nu,d,2,n)$ is an irreducible component of $\fol(\Te;2,n)$, $\Te=(d+n-1)\nu-n+1$. Let us give an idea of the proof.

First of all, $PB(\nu,d,2,n)$ is an unirational irreducible algebraic subset of $\fol(\Te;2,n)$, because it is the closure in $\fol(\Te;2,n)$ of the set $\{f^*(\G)\,|\,f\in RM(n,n-1,\nu)\,,\,\G\in\fol(d,1,n-1)\}$. Let $Z$ be the (unique) irreducible component of $\fol(\Te;2,n)$ containing 
$PB(\nu,d,2,n)$. Since $PB(\nu,d,2,n)$ and $Z$ are irreducible it is sufficient to prove that there exists $\fa=f^*(\G)\in PB(\nu,d,2,n)$ such that for any germ of holomorphic one parameter family $(\fa_t)_{t\in (\C,0)}$ of foliations $\fa_t\in Z$ with $\fa_0=\fa$ then $\fa_t\in PB(\nu,d,2,n)$, $\forall\,t\in(\C,0)$.

We choose $\fa=f^*(\G)$, where $(f,\G)$ is a generic pair (see \S \ref{ss:23}), and $\G\in \M'_g(d)$ (see \S \ref{ss:21} and \S \ref{ss:25}).
Given the one parameter family $(\fa_t)_{t\in (\C,0)}$ with $\fa_0=f_0^*(\G_0)$, we will construct in \S\,\ref{ss:32} two holomorphic one parameter families $(f_t)_{t\in (\C,0)}$ and $(\G_t)_{t\in (\C,0)}$ of generic maps and foliations, such that $f_0=f$, $\G_0=\G$ and $f_t^*(\G_t)=\fa_t$ for all $t\in(\C,0)$, so that $\fa_t\in PB(\nu,d,2,n)$ for all $t\in (\C,0)$.
In the next section we will describe briefly how to find these families. 

A problem with the families $(f_t)_{t\in (\C,0)}$ and $(\G_t)_{t\in (\C,0)}$ that we will construct in \S \ref{ss:32} is that we cannot assert a priori that $\fa_t=F_t^*(\G_t)$, $\forall t\in(\C,0)$.
This fact will be proved in \S \ref{ss:33} in the case of two dimensional foliations. 
In \S\,\ref{ss:34} we will see how to reduce the case of foliations of dimension $\ge3$ to the case of two dimensional foliations.

\subsection{Construction of the families $(\G_t)_{t\in(\C,0)}$ and $(f_t)_{t\in(\C,0)}$.}\label{ss:32}

Recall that $Sing(\fa_0)$ contains $N$ smooth complete intersection curves $V_{q_1}$,...,$V_{q_N}$, such that:
\begin{itemize}
\item{} $V_{q_j}=\ov{f^{-1}(q_j)}$, $1\le j\le N$, where $\{q_1,...,q_N\}=Sing(\G_0)$.
\item{} $V_{q_i}\cap V_{q_j}=I(f_0)$, $\forall i\ne j$. In particular, $I(f_0)\sub V_{q_j}$, $\forall j$.
\item{} $V_{q_j}\setminus I(f_0)\sub K(\fa_0)$, $\forall j$.
\end{itemize}
In fact, it can be proved that $K(\fa_0)=\bigcup_jV_{q_j}\setminus I(f_0)$, but we will not use this essentially.
Using corollary \ref{c:217}, about deformations of conic NGK points we can state the following:

\begin{cor}
{\rm Set $I(f_0)=\{p_1,...,p_\rho\}$, $\rho=\nu^n$. Then there are holomorphic germs of curves $P_j\colon(\C,0)\to\p^n$, $1\le j\le \rho$, such that $P_j(0)=p_j$ and $P_j(t)$ is a conic NGK singularity of degree $d$.
Furthermore, for each $j\in\{1,...,\rho\}$ there exists a germ of holomorphic one parameter family of foliations $t\in(\C,0)\mapsto \G^j_t\in\M'(d)$  such that $\G^j_0=\G_0$ and $\fa_t$ represents $\G^j_t$ near $P_j(t)$.}
\end{cor}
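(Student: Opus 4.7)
The plan is to obtain the statement as a fiber-by-fiber application of Corollary \ref{c:217} at each of the $\rho=\nu^n$ indeterminacy points of $f_0$, once those points are identified as conic NGK singularities of $\fa_0$ of degree $d$.

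First I would fix $j\in\{1,\dots,\rho\}$, pick a lift $\wt p_j\in\Pi_n^{-1}(p_j)$, and exploit genericity of $f_0$ (so that $\wt f_0$ is a submersion at $\wt p_j$) to choose an affine chart $\C^n\sub\p^n$ centered at $p_j$ in which $f_0$ becomes the canonical projection $x\in\C^n\setminus\{0\}\mapsto[x]\in\p^{n-1}$; this is precisely the reduction performed at the beginning of \S\,\ref{ss:25}. In this chart $\fa_0$ is represented by a polynomial $(n-2)$-form $\eta^j_0$ with homogeneous coefficients of degree $d+1$ satisfying $i_R\eta^j_0=0$; in other words, $\eta^j_0$ is exactly a form representing $\G_0$ in homogeneous coordinates. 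Since $\G_0\in\M'_g(d)\sub\M'(d)$, Lemma \ref{l:211} together with Remark \ref{r:215} identifies $0\in\C^n$ as a conic NGK singularity of degree $d$ of $\eta^j_0$.

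Next I would restrict the germ of family $(\fa_t)_{t\in(\C,0)}$ to this chart. Choosing a holomorphic trivialization of the defining line bundle extending the one producing $\eta^j_0$ at $t=0$ yields a holomorphic one-parameter family of integrable $(n-2)$-forms $(\eta^j_t)_{t\in B_r}$ on a neighborhood of some closed ball $0\in\ov B\sub\C^n$. This family satisfies exactly the standing hypotheses of Corollary \ref{c:217}, so applying it produces $0<r'\le r$ and a holomorphic map $Q_j\colon B_{r'}\to B$ with $Q_j(0)=0$, such that $Q_j(t)$ is a conic NGK singularity of degree $d$ of $\eta^j_t$ and such that $j^{d+1}_{Q_j(t)}(\eta^j_t)$, regarded as a homogeneous $(n-2)$-form, represents a foliation $\G^j_t\in\M'(d)$ depending holomorphically on $t$. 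Transferring $Q_j$ through the chart furnishes the curve $P_j\colon(\C,0)\to\p^n$ with $P_j(0)=p_j$ required by the statement.

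The identity $\G^j_0=\G_0$ is then automatic: since $\eta^j_0$ has homogeneous coefficients of degree $d+1$, it equals its own $(d+1)$-jet at the origin, which we have already identified as a form representing $\G_0$. The phrase ``$\fa_t$ represents $\G^j_t$ near $P_j(t)$'' is nothing but the notational convention fixed after Remark \ref{r:218}. I do not anticipate any serious obstacle; the only mildly technical point is the trivialization used to pass from the germ of family on $\p^n$ to a germ of family of $(n-2)$-forms on $\ov B$, but this is automatic in the germ setting since a holomorphic line bundle on a Stein neighborhood of $p_j$ times a disc is determined by its restriction at $t=0$.
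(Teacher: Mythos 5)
Your proposal is correct and follows exactly the route the paper intends: the paper derives this corollary directly from Corollary \ref{c:217} (after the identification, made at the start of \S\,\ref{ss:25} via Lemma \ref{l:211} and Remark \ref{r:215}, of each point of $I(f_0)$ as a conic NGK singularity of degree $d$ of $\fa_0$ in a chart where $f_0$ is the canonical projection). Your write-up simply supplies the routine details that the paper leaves implicit.
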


{\bf Notation.} We will use the notation: $Con(\fa_t)=\{P_1(t),...,P_\rho(t)\}$.

\begin{rem}\label{r:32}
{\rm Since, in principle, we have $\rho=\nu^n$ different one parameter families of one dimensional foliations, $(\G^j_t)_{t\in (\C,0)}$,
$1\le j\le \rho$, we cannot assert a priori that, if $i\ne j$, then $\G^i_t$ is equivalent to $\G^j_t$ for any $t\in(\C,0)$. Indeed, this fact is true, but it will be a consequence of the final result.}
\end{rem}

{\bf Notation.} We will choose the family $(\G_t)_{t\in(\C,0)}$ of \S \ref{ss:31} as the family $(\G^1_t)_{t\in(\C,0)}$.
\begin{rem}\label{r:33}
{\rm  Since $\G_0\in\M_g(d)\sub\M(d)$ and $\M(d)$ is open, we can assert that $\G_t\in\M(d)$ for all $t\in(\C,0)$. However, we don't know if $\M_g(d)$ is open, but only Zariski generic in $\M(d)$. Therefore, we can't assert that $\G_t\in\M_g(d)$ for all $t\in(\C,0)$. On the other hand, we can assert that there exists a countable subset $C\sub(\C,0)$ such that $\G_t\in\M_g(d)$ for all $t\notin C$. We leave the details of this assertion for the reader.}
\end{rem}

\vskip.1in

Next we will describe briefly how to obtain the family of maps $(f_t)_{t\in(\C,0)}$ of \S \ref{ss:31}. The construction of this family will be done by using the deformation of the curves $V_{q_j}$, $1\le j\le N$, and a theorem of Sernesi \cite{Ser}.  
First of all, an easy consequence of proposition \ref{p:29} and corollary \ref{c:217} is the following:

\begin{lemma}\label{l:34}
{\rm There exist $N$ germs of $C^{\infty}$ isotopies $\phi^j\colon (\C,0)\times V_{q_j} \to \mathbb P^n$, $1\le j\le N$, such that if we denote $V_j(t):=\phi_{q_j}(\{t\}\times V_{q_j})$ then:
\begin{enumerate}
\item[(a)] $V_j(0)=V_{q_j}$ and $V_j(t) \setminus Con(\fa_t)$ is contained in the Kupka set of $\fa_t$, $\forall\,\, t\in (\C,0)$ and $\forall\,\,1\le j\le N$.  
\item[(b)] $Con(\fa_t) \subset V_j(t)$, $\forall\,\,1\le j\le N$ and $\forall\,\,t \in(\C,0)$. Moreover, if $i \neq j$ then $V_i(t)  \cap V_j(t) = Con(\fa_t)$, $\forall\,\,t \in (\C,0)$.
\end{enumerate}
In particular, $V_j(t)$ is an algebraic smooth curve of $\p^n$, $\forall\,\,1\le j\le N$ and $\forall\,\, t\in (\C,0)$.}
\end{lemma}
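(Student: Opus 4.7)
The plan is to build each isotopy $\phi^j$ by patching together local deformations of two kinds, which together cover $V_{q_j}$: local deformations of conic singularities (supplied by Corollary \ref{c:217}) at points of $I(f_0)\subset V_{q_j}$, and local isotopies of the Kupka set (supplied by Proposition \ref{p:29}) on compact arcs of $V_{q_j}\setminus I(f_0)$.

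First I would analyze the local picture at each $p_i\in I(f_0)$. By Corollary \ref{c:217} there is a holomorphic curve $P_i(t)$ with $P_i(0)=p_i$ such that $\fa_t$ represents a foliation $\G^i_t\in\M'(d)$ near $P_i(t)$; via Remark \ref{r:218} and the construction in \S\,\ref{ss:25} the singular set of $\fa_t$ in a small ball $B_i$ around $P_i(t)$ is a union of $N$ smooth analytic curves $\ell^i_1(t),\dots,\ell^i_N(t)$ meeting only at $P_i(t)$, depending holomorphically on $t$, and at $t=0$ coinciding with the germs of $V_{q_1},\dots,V_{q_N}$ at $p_i$. This lets me label the branches so that $\ell^i_j(0)$ is the germ of $V_{q_j}$ at $p_i$, which extracts a local $C^\infty$ isotopy $\psi^j_i$ of $V_{q_j}\cap B_i$ onto $\ell^i_j(t)$. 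Away from $I(f_0)$, Lemma \ref{l:210} says $V_{q_j}\setminus I(f_0)\subset K(\fa_0)$ with non-degenerate normal type $Y_j$, so a compact arc $\Ga'\subset V_{q_j}$ whose endpoints lie on the boundary of a ball $B'\subset\p^n\setminus I(f_0)$ falls squarely under the hypotheses of Proposition \ref{p:29}, producing a $C^\infty$ isotopy $\psi^j_{B'}$ of $\Ga'$ into $K(\fa_t)$.

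Next I would glue these local isotopies. Cover $V_{q_j}$ by the finitely many balls $B_i$ around the points of $I(f_0)$ together with finitely many balls in $\p^n\setminus I(f_0)$ on which Proposition \ref{p:29} applies; take a $C^\infty$ partition of unity subordinate to this cover and use it to average the time-derivatives $\partial_t\psi^j_\bullet$ into a single $C^\infty$ vector field $v^j_t$ defined on a neighborhood of $V_{q_j}$ in $(\C,0)\times\p^n$. Because the singular set is locally a \emph{single} smooth analytic curve at every non-conic point (either a Kupka curve or one of the branches $\ell^i_j$ minus $P_i(t)$), I can arrange the local vector fields to be tangent to that curve; combining with a nearest-point retraction onto the appropriate local branch on overlaps, the flow of $v^j_t$ at time $t$ gives the desired global isotopy $\phi^j$, whose image $V_j(t)$ lies entirely in $\mathrm{Sing}(\fa_t)$ and passes through every $P_i(t)$. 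Property (a) is built into the construction, since away from the $P_i(t)$ the image sits on a Kupka curve. For (b), each $V_j(t)$ contains all of $\mathrm{Con}(\fa_t)$ by the local construction at each $B_i$, while for $i\neq j$ the branches $\ell^k_j(t)$ and $\ell^k_i(t)$ meet only at $P_k(t)$, so $V_i(t)\cap V_j(t)=\mathrm{Con}(\fa_t)$. Finally, $V_j(t)$ is a compact analytic subset of $\p^n$ (locally a single smooth analytic curve at every one of its points, including $P_i(t)$ where it equals the single smooth branch $\ell^i_j(t)$), hence algebraic by Chow and smooth.

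The main obstacle I foresee is the global consistency of the branch labeling at different conic points. The issue is that Corollary \ref{c:217} works one $p_i$ at a time and a priori assigns its own indexing of the $N$ branches $\ell^i_1(t),\dots,\ell^i_N(t)$ through $P_i(t)$; I must verify that the branch $\ell^i_j(t)$ selected by continuity at $p_i$ connects, via the Kupka isotopies of Proposition \ref{p:29}, to the correspondingly selected branch $\ell^{i'}_j(t)$ at any other $p_{i'}\in V_{q_j}$. At $t=0$ this is forced by the connectedness of $V_{q_j}$ and the fact that at each $p_i$ the unique germ of $V_{q_j}$ is labelled $\ell^i_j(0)$. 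For $t\neq 0$ small, the continuity of the branches in $t$ together with the local uniqueness of the smooth Kupka curves (Proposition \ref{p:27}) propagates the matching along each connected arc of $V_{q_j}\setminus I(f_0)$, so the labels remain globally consistent.
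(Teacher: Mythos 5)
Your proposal is correct and follows essentially the same route as the paper, which proves the lemma by invoking the local stability under deformation of the Kupka set (Proposition \ref{p:29}) and of the conic points (Corollary \ref{c:217}) and then gluing, referring to \cite[lema 2.3.3]{ln} and \cite{clne} for the patching argument you spell out. Your explicit treatment of the partition-of-unity gluing and of the consistency of the branch labeling at different conic points fills in details the paper leaves to the references.
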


\begin{proof}  The argument is similar to \cite[lema 2.3.3, p.83]{ln} and uses essentially the local stability under deformations of the Kupka set and of $Con(\mathcal F_0)$ (see also \cite{clne}).
 \end{proof}
 
The map $f_{t}:\mathbb{P}^n-\to\mathbb{P}^{n-1}$, $f_{t} \in Gen\left(n,n-1,\nu\right)$ will be constructed in such a way that the curves $V_j(t)$, $1\le j\le N$, will be fibers of $f_t$, $\forall t\in(\C,0)$.
Since $d\ge2$ we have $\#Sing(\G_0)=N=(d^n-1)/(d-1)>n$, and we can assume that $\{q_1,...,q_n\}\sub Sing(\G_0)$, where $q_{1}=[1:0:...:0],...,q_{n}=[0:0:...:1]$. 

\begin{lemma}\label{l:35}
{\rm Let $(\mathcal{F}_{t})_{t \in (\C,0)}$ be as before; $\fa_0=f^*_0(\G_0)$. Then there exists a holomorphic germ of deformation
$({f}_{t})_{t \in(\C,0)}$ of $f_{0}$ in $Gen\left(n,n-1,\nu\right)$ such that:
\begin{enumerate}
\item[(i)] $Con(\mathcal F_t)=I(f_{t}), \forall\,\,t\in(\C,0)$.
\item[(ii)] $V_j(t)$ is a fiber of $f_t$, $\forall\,\,1\le j\le N$, $\forall\,\,t\in(\C,0)$.
\item[(iii).] The pair $(f_t,\G_t)$ is generic for all $t\in(\C,0)$.
\end{enumerate}}
\end{lemma}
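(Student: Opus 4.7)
My plan is to construct $(f_t)_{t \in (\C,0)}$ by deforming the polynomial components $F_0, \ldots, F_{n-1}$ of $\wt{f}_0$ so that their common zero loci match the prescribed curves $V_1(t), \ldots, V_n(t)$. First, since the $N$ singularities of $\G_t$ depend holomorphically on $t$ and $q_j(0) = e_{j-1}$ for $j = 1, \ldots, n$, I normalize by composing $\G_t$ with a holomorphic family of automorphisms of $\p^{n-1}$ (equal to the identity at $t = 0$) so that $q_j(t) = e_{j-1}$ for all $j \le n$ and all small $t$; this is possible because $\{e_0, \ldots, e_{n-1}\}$ is in general position in $\p^{n-1}$.

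Writing $\wt{f}_0 = (F_0, \ldots, F_{n-1})$, each $V_{q_j}$ for $j \in \{1, \ldots, n\}$ equals $\bigcap_{i \ne j-1}(F_i = 0)$, a smooth complete intersection of $n-1$ hypersurfaces of degree $\nu$. The Koszul resolution, together with $H^1(\p^n, \O(-k)) = 0$ for $k \ge 0$, yields $H^0(\p^n, \mathcal{I}_{V_{q_j}}(\nu)) = \langle F_i : i \ne j-1 \rangle$, of dimension $n-1$. Hence, for each fixed $i \in \{0, \ldots, n-1\}$, the intersection $\bigcap_{j \ne i+1} H^0(\p^n, \mathcal{I}_{V_{q_j}}(\nu))$ equals $\langle F_i \rangle$, which is $1$-dimensional.

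By Sernesi's theorem on the smoothness of the Hilbert scheme at a smooth complete intersection \cite{Ser}, each $V_j(t)$ remains a smooth complete intersection of $n-1$ hypersurfaces of degree $\nu$ for small $t$, and the subspaces $W_j(t) := H^0(\p^n, \mathcal{I}_{V_j(t)}(\nu)) \subset H^0(\p^n, \O(\nu))$ vary holomorphically as $(n-1)$-dimensional subspaces. Since $\bigcap_{j \ne i+1} W_j(0) = \langle F_i \rangle$ is transverse and $1$-dimensional, the intersection stays $1$-dimensional for small $t$, and I pick a holomorphic generator $F_i(t)$ with $F_i(0) = F_i$. Setting $f_t := [F_0(t) : \cdots : F_{n-1}(t)]$, by construction $V_j(t) \subseteq \bigcap_{i \ne j-1}(F_i(t) = 0)$ for $j \le n$; equality holds by degree comparison ($\nu^{n-1}$ on each side), so $V_j(t) = f_t^{-1}(e_{j-1})$. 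The indeterminacy locus $I(f_t) = \bigcap_{j \le n} V_j(t)$ then contains $Con(\fa_t)$ by Lemma \ref{l:34}(b) and coincides with it by cardinality ($\nu^n$ points), yielding (i).

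For $j > n$, the curve $V_j(t)$ deforms $V_{q_j}$, a fiber of $f_0$, so the restricted morphism $f_t|_{V_j(t)}\colon V_j(t) \to \p^{n-1}$ (well-defined on the smooth curve after extension across the finite set $I(f_t) \cap V_j(t)$) is constant at $t = 0$; hence $(f_0|_{V_{q_j}})^* \O_{\p^{n-1}}(1)$ has degree zero on $V_{q_j}$. By continuity of the degree of the pullback line bundle in the holomorphic family of smooth curves, $(f_t|_{V_j(t)})^* \O_{\p^{n-1}}(1)$ has degree zero for all small $t$, which forces the morphism to remain constant; therefore $V_j(t)$ lies in a fiber of $f_t$ and, by degree comparison, equals a fiber. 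Property (iii) then follows because $Gen(n, n-1, \nu)$ and the condition $Sing(\G_t) \cap C(f_t) = \emptyset$ are open conditions holding at $t = 0$. The principal obstacle lies in ensuring that $\bigcap_{j \ne i+1} W_j(t)$ remains $1$-dimensional under the deformation: semi-continuity alone gives only an upper bound, and the equality depends both on Sernesi's theorem controlling the deformation of each $V_j(t)$ as a complete intersection and on the transversality of the subspaces $W_j(0)$ coming from the coordinate-point configuration of $q_1, \ldots, q_n$.
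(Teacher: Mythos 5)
Your construction breaks precisely at the step you yourself flag as ``the principal obstacle'': you never establish the lower bound $\dim\bigl(\bigcap_{j\ne i+1}W_j(t)\bigr)\ge 1$, and the ``transversality'' you invoke at $t=0$ in fact works against you. The $W_j(0)$ are $(n-1)$-dimensional subspaces of $H^0(\p^n,\O(\nu))$, whose dimension $\binom{n+\nu}{n}$ is much larger than $n$; a genuinely transverse collection of $n-1$ such subspaces would intersect in $\{0\}$. So the configuration at $t=0$ is special, not transverse, and upper semicontinuity of $t\mapsto\dim\bigcap_{j\ne i+1}W_j(t)$ only yields $\le 1$ near $t=0$: a priori the intersection can drop to $\{0\}$ for $t\ne 0$, in which case there is no candidate $F_i(t)$ at all and the whole construction of $f_t$ collapses. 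What actually saves the intersection is that every $W_j(t)$ lies inside the common space $U(t):=H^0(\p^n,\I_{Con(\fa_t)}(\nu))$, and one can show $\dim U(t)=n$ near $t=0$ (upper semicontinuity for the flat family of reduced $\nu^n$-point schemes gives $\le n$, while $\dim(W_1(t)\cap W_2(t))\le n-2$ by semicontinuity forces $\dim(W_1(t)+W_2(t))\ge n$, hence $\ge n$); then the $W_j(t)$ are hyperplanes of $U(t)$ and any $n-1$ of them meet in dimension at least $1$. Some such supplementary argument is indispensable and is missing from your proof.

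For comparison, the paper avoids this issue entirely: it applies Sernesi's theorem only to the two curves $V_1(t)$ and $V_2(t)$, extracting $F_{1\,t},\dots,F_{n-1\,t}$ from the ideal of $V_1(t)$ and $F_{0\,t}$ from the ideal of $V_2(t)$; this already produces $f_t$ and gives $Con(\fa_t)=I(f_t)$ by the cardinality count, after which all the remaining curves are shown to be fibers in one stroke by Noether's $AF+BG$ theorem for the multitransverse intersection $I(f_t)$, rather than by recovering each $F_i$ from intersections of the $W_j(t)$. A secondary weak point in your argument for $j>n$ is the appeal to ``continuity of the degree'' of $(f_t|_{V_j(t)})^*\O_{\p^{n-1}}(1)$: the extension of $f_t$ across $I(f_t)\cap V_j(t)$ need not vary nicely a priori, so this is not automatic; it can however be repaired, since once (i) is known one has $\deg\,(f_t|_{V_j(t)})^*\O_{\p^{n-1}}(1)=\nu\cdot\deg V_j(t)-\sum_p m_p\le \nu^n-\#\,I(f_t)=0$, which forces the restricted map to be constant.
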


\begin{proof}
Let $\tilde f_{0}=(F_0,...,F_{n-1}): \mathbb C^{n+1} \to\mathbb C^{n}$ be the homogeneous lifting of $f_{0}$. By the choice of $q_1,...,q_n$, the first $n$ curves $V_{q_1}$, $V_{q_2}$ ,..., and $V_{q_n}$ appear as the complete intersections $V_{q_i}=\Pi_n(F_{0}=F_{1}=...=\widehat{F_{i-1}}=...=F_{n-1}=0)$, where the symbol $\widehat{F_{j}}$ means omission of $F_{j}$ in the sequence. The remaining curves $V_{q_i}, i>n$ are also defined by $n-1$ polynomials in the ideal $\I_0:=\left<F_{0},\dots,F_{n-1}\right>$.

Now, we use Sernesi's stability criteria \cite[ sec. 4.6 235-236]{Ser}. It follows from lemma \ref{l:34} and Sernesi's criteria that for each $j\in \{1,...,N\}$ the curve $V_j(t)$ is also a complete intersection. Moreover, it is defined by an ideal of homogeneous polynomials of degree $\nu$, $\I^j_t=\left<G_{1\,t}^j,...,G_{n-1\,t}^j\right>$ such that each germ $t\in(\C,0)\mapsto G^j_{i\,t}\in\C[X_0,...,X_n]$, $1\le i\le n-1$, is holomorphic and moreover $\I^j_0\sub\I_0$. For instance, in the case $j=1$ we have $V_{q_1}=\Pi_n(F_1=...=F_{n-1}=0)$ and there are holomorphic families of homogeneous polynomials
$t\in(\C,0)\mapsto F_{i\,t}\in\C[X_0,...,X_n]$, $1\le i\le n-1$, such that $F_{i\,0}=F_i$ and 
\[
V_1(t)=\Pi_n(F_{1\,t}=...=F_{n-1\,t}=0)\,\,,\,\,\forall\,t\in(\C,0)\,\,.
\]
Similarly, there are holomorphic germs $t\in(\C,0)\mapsto F_{0\,t}\in\C[X_0,...,X_n]$, $t\in(\C,0)\mapsto\wt{F}_{2\,t}\in\C[X_0,...,X_n]$, ...,
$t\in(\C,0)\mapsto\wt{F}_{n-1\,t}\in\C[X_0,...,X_n]$ such that $F_{0\,0}=F_0$, $\wt{F}_{2\,0}=F_2$, ..., $\wt{F}_{n-1\,0}=F_{n-1}$ and
\[
V_2(t)=\Pi_n(F_{0\,t}=\wt{F}_{2\,t}=...=\wt{F}_{n-1\,t}=0)\,\,,\,\,\forall\,t\in(\C,0)\,\,.
\]

{\bf Notation.} We choose the family $(f_t)_{t\in(\C,0)}$ in such a way that $\wt{f}_t=(F_{0,t},...,F_{n-1\,t})$, where $\wt{f}_t$ is the homogeneous lifting of $f_t$, for all $t\in(\C,0)$. Note that $f_t$ is a generic map for any $t\in(\C,0)$.

\vskip.1in

Let us prove that $Con(\fa_t)=I(f_t)$, $\forall\,t\in(\C,0)$. First of all, $\Pi_n(F_{0\,t}=0)$, ...,$\Pi_n(F_{n-1\,t}=0)$ intersect multitransversely at $\nu^n$ points, because $f_t$ is generic for all $t\in(\C,0)$. Since $I(f_t)=\Pi_n(F_{0\,t}=...=F_{n-1\,t}=0)$ we get $\#\,I(f_t)=\nu^n$. On the other hand, $Con(\fa_t)\sub I(f_t)$, because $Con(\fa_t)=V_1(t)\cap V_2(t)=$
\[
=\Pi_n(F_{1\,t}=...=F_{n-1\,t}=0)\,\cap\,\Pi_n(F_{0\,t}=\wt{F}_{2\,t}=...=\wt{F}_{n-1\,t}=0)\,\sub\,I(f_t)\,\,.
\]
Finally $Con(\fa_t)=I(f_t)$ because $\#\,Con(\fa_t)=\nu^n=\#\,I(f_t)$.

It remains to prove that $V_j(t)$ is a fiber of $f_t$, $\forall\,t\in(\C,0)$. Here we use Noether's theorem about multitransversal intersections.
The fact that $Con(\fa_t)=I(f_t)$ is a multitransversal intersection and $Con(\fa_t)\sub V_j(t)$ imply that the ideal $\I_j(t)$ that defines $V_j(t)$ in homogeneous coordinates is contained in the ideal $\I_t:=\left<F_{0\,t},...,F_{n-1\,t}\right>$. This of course implies that $V_j(t)$ is a fiber of $f_t$.

Finally, the pair $(f_t,\G_t)$ is generic for all $t\in(\C,0)$, because $(f_0,\G_0)$ is generic and the set of generic pairs is open.
\end{proof}

\subsection{End of the proof of Theorem \ref{teob} in the case of two dimensional foliations}\label{ss:33} 
Let $(\fa_t)_{t\in(\C,0)}$ be the germ of deformation of $\fa_0=f_0^*(\G_0)$ of \S \ref{ss:31} and $(f_t,\G_t)_{t\in(\C,0)}$ be the germ of deformation of $(f_0,\G_0)$ obtained in \S \ref{ss:32}. Consider the holomorphic family of foliations $\left(\wt{\fa}_t\right)_{t\in(\C,0)}$ defined by $\wt{\fa}_t=f_t^*(\G_t)$, $\forall\,t\in(\C,0)$. Of course $\wt{\fa}_0=\fa_0$ and $\wt{\fa}_t\in PB(\nu,d,2,n)$, $\forall\,t\in(\C,0)$.

\begin{lemma}\label{l:36}
{\rm $\wt{\fa}_t=\fa_t$ for all $t\in(\C,0)$. In particular, $\fa_t\in PB(\nu,d,2,n)$, $\forall\,t\in(\C,0)$.}
\end{lemma}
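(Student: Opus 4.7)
The plan is to exhibit $\fa_t$ itself as a pullback via $f_t$, that is, to find a one-dimensional foliation $\H_t$ on $\p^{n-1}$ with $\fa_t = f_t^*(\H_t)$, and then to identify $\H_t = \G_t$; this forces $\fa_t = f_t^*(\G_t) = \wt{\fa}_t$.

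For the first step, I would establish that the fibers of $f_t$ are tangent to the leaves of $\fa_t$ on a non-empty open subset of $\p^n$ and then propagate the tangency globally by analyticity. Concretely, in a tubular neighborhood $U$ of $V_1(t) \setminus Con(\fa_t)$, Lemma \ref{l:35} asserts that $V_1(t)$ is a fiber of $f_t$, while Lemma \ref{l:34} combined with Proposition \ref{p:29} and the genericity $\G_0 \in \M'_g(d)$ ensures that $V_1(t)$ is a Kupka curve of $\fa_t$ with non-degenerate normal type. Following the argument in the proof of Lemma \ref{l:210}, I would choose local coordinates $(y,s) \in \C^{n-1} \times \C$ on $U$ adapted to $f_t$, so that $f_t(y,s) = y$ and $V_1(t) = \{y = 0\}$; then Proposition \ref{p:27} yields that in these coordinates the defining form of $\fa_t$ takes the shape $\eta_t = i_{Y_t(y)}\, dy_1 \wedge \cdots \wedge dy_{n-1}$, with $Y_t$ depending only on $y$. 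In particular, $i_{\pa_s}\eta_t = 0$ on $U$, the desired tangency between fibers of $f_t$ and leaves of $\fa_t$. Since $\eta_t$ is a polynomial $(n-2)$-form on $\C^{n+1}$ and the tangency condition can be expressed as the vanishing of an analytic form built from $\eta_t$ and local generators of $\ker df_t$, the identity principle extends the tangency to all of $\p^n$. Hence the leaves of $\fa_t$ are unions of fibers of $f_t$, giving a unique $\H_t \in \fol(d;1,n-1)$ with $\fa_t = f_t^*(\H_t)$.

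The identification $\H_t = \G_t$ is then made at the conic point $P_1(t) \in I(f_t)$: applying Remark \ref{r:218} to $\fa_t = f_t^*(\H_t)$, blowing up $\p^n$ at $P_1(t)$ produces a foliation whose restriction to the exceptional divisor $E \simeq \p^{n-1}$ is $\H_t$, while by the construction of $(\G_t) = (\G_t^1)$ in \S\,\ref{ss:32}, this same restriction equals $\G_t$. Therefore $\H_t = \G_t$ and $\fa_t = \wt{\fa}_t$, which finishes the proof. The main obstacle is the first step: one must show that the $f_t$-adapted coordinate system $(y,s)$ simultaneously realizes the Kupka normal form of $\fa_t$ along $V_1(t)$. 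The Kupka trivial direction and the fiber tangent direction coincide tautologically on $V_1(t)$ itself (both are the tangent of $V_1(t)$), but propagating this alignment to a neighborhood requires either a deformation argument from $t = 0$ — where $\fa_0 = f_0^*(\G_0)$ trivially satisfies the desired form by Lemma \ref{l:210} — or a direct appeal to the rigidity of the Kupka product structure in the presence of the conic NGK singularity at $P_1(t)$.
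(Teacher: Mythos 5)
There is a genuine gap at exactly the step you flag, and neither of your proposed repairs closes it. Proposition \ref{p:27} produces \emph{some} coordinate system $(y,s)$ in which the defining form depends only on $y$; it does not produce one in which, in addition, $f_t(y,s)=y$. The trivial direction of the Kupka product structure is the direction of the rotational $rot(\eta_t)$ (as in the proof of Lemma \ref{l:210}, where $rot(\eta)=tr(DZ(0))\,\frac{\pa}{\pa t}$), and this direction agrees with $\ker df_t$ along $V_1(t)$ itself --- both are tangent to the curve --- but there is no a priori reason for them to agree on a neighborhood, precisely because $\fa_t$ is not yet known to be a pullback. In Lemma \ref{l:210} the implication runs the other way: one \emph{starts} from a pullback and reads off the Kupka normal form in the $f$-adapted chart. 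Your deformation fallback also fails: for fixed $t\ne0$ the tangency locus $\{p:\ i_v\eta_t(p)=0\ \forall\,v\in\ker df_t(p)\}$ is an analytic set containing the curves $V_j(t)$, but an analytic set containing a curve need not be all of $\p^n$; the identity principle requires tangency on an open set, which is exactly what is missing.

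The paper gets around this by proving much less locally and compensating globally. After blowing up the conic points it shows that $\fa_t'$ and $\wt\fa_t'=\Phi_t^*(\G_t)$ share a \emph{single} leaf, namely a separatrix $\Te$ along the compact curve $V_1'(t)$: the separatrix of $\fa_t'$ is assembled by gluing the local Kupka separatrices $\phi_\a^{-1}(\g_1(t))$ (the gluing uses continuity from $t=0$), and the fact that this same surface is $\wt\fa_t'$-invariant is Claim \ref{cl:38}, whose proof is a properness argument --- $\Te\cap\Phi_t^{-1}(\sigma_s)$ is a compact curve, so its $\Phi_t$-image is a point and it is therefore a whole fiber. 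That argument uses in an essential way that $\Te$ contains the entire compact fiber $V_1'(t)$, so it cannot be applied to an arbitrary nearby leaf; this is why one cannot upgrade it to tangency on an open set. Finally, the hypothesis $\G_t\in\M_g(d)$ (no positive-dimensional algebraic invariant set, valid off a countable set of parameters) makes the common leaf Zariski-dense in $\p^n$, forcing $\fa_t=\wt\fa_t$. Your identification of the hypothetical $\H_t$ with $\G_t$ at the conic point is correct and matches the paper, but the existence of $\H_t$ is the whole content of the lemma, and your route to it does not go through as written.
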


\begin{proof}
The idea is to prove that $\wt{\fa}_t$ and $\fa_t$ have a common leaf $L_t$, $\forall\,t\in(\C,0)$. In particular, the foliations $\wt{\fa}_t$ and $\fa_t$ coincide in the Zarisk closure $\ov{L}_t^{\,Z}$ of $L_t$. 
On the other hand, we have seen in remark \ref{r:33} that there exists a germ of countable set $C\sub(\C,0)$ such that $\G_t\in \M_g(d)$ for all $t\notin C$. The fact that $\G_t\in\M_g(d)$ implies that the Zariski closure of any leaf of $\G_t$ is the whole $\p^{n-1}$. As we will see, this will imply that $\ov{L}_t^{\,Z}$ is the whole $\p^n$, $\forall\,t\notin C$. Since $C$ is countable this will finish the proof of lemma \ref{l:36}.

\vskip.1in

We begin by blow-up once at the $\rho=\nu^n$ points $P_1(t),...,P_\rho(t)$ of $Con(\fa_t)$. Let us denote by $M(t)$ the complex manifold obtained from this blow-up procedure, by $\pi_t\colon M(t)\to \p^n$ the blow-up map and by $E_1(t),...,E_\rho(t)$ the exceptional divisors obtained, where $\pi(E_j(t))=P_j(t)$, $1\le j\le \rho$. Denote by $V_j'(t)$ the strict transform of $V_j(t)$ by $\pi_t$.

\begin{rem}\label{r:37}
{\rm Since the pair $(f_t,\G_t)$ is generic and $I(f_t)=Con(\fa_t)=\{P_1(t),...,P_\rho(t)\}$, we can assert the following facts:
\begin{itemize}
\item[(I).] The map $f_t\circ\pi_t$ extends to a holomorphic map $f_t'\colon M(t)\to\p^{n-1}$.
Moreover, for any $1\le j\le \rho$, there exists a neighborhood $U_j$ of $P_j(t)$ such that $f_t'|_{U_j}\colon U_j\to\p^{n-1}$ is a submersion.
\item[(II).] The fiber $f_t'^{-1}(q)$, $q\in\p^{n-1}$ is the strict transform of $f_t^{-1}(q)$ by $\pi_t$. It is smooth near $E_j(t)$ and cuts $E_j(t)$ transversely in just one point, $\forall\,1\le j\le \rho$.
\item[(III).] $V_j'(t)$ is a smooth curve and $f_t'$ is a submersion in some neighborhood of $V_j'(t)$, $\forall\,1\le j\le \rho$.
\end{itemize}
Assertion (I) follows from the fact that $f_t$ is equivalent to the canonical projection $\Pi_{n-1}\colon\C^n\setminus\{0\}\to\p^{n-1}$ near $P_j(t)$, $1\le j\le \rho$. Of course, (I) $\implies$ (II) $\implies$ (III).}
\end{rem}

Let us denote by $\fa_t'$ and $\wt\fa_t'$ the strict transforms by $\pi$ of the foliations $\fa_t$ and $\wt\fa_t$, respectively.
Note that, for each $1\le j\le \rho$, the foliation $\wt{\fa}_t'|_{E_j(t)}$ is a foliation by curves on $E_j(t)\simeq\p^{n-1}$ equivalent to $\G_t$.
Similarly, $\fa_t'|_{E_j(t)}$ is a foliation by curves on $E_j(t)$, but we cannot assert that $\fa_t'|_{E_i(t)}$ is equivalent to
$\fa'_t|_{E_j(t)}$ if $i\ne j$. However, by the choice $\G_t$, we can assert that $\G_t$ is equivalent to $\fa_t'|_{E_1(t)}$.

On the other hand, by using (II) we can define a holomorphic map $\Phi_t\colon M(t)\to E_1(t)$, $1\le j\le \rho$, by
\[
\Phi_t(q):=f_t'^{-1}(f_t'(q))\cap E_1(t)\,\,,\,\,\forall\,q\in\,\p^n\,\,.
\]
Note that the fibers of $\Phi_t$ coincide with the fibers of $f_t'$. In fact, the maps $\Phi_t$ and $f_t'$ are equivalent, in the sense that there exists a biholomorphism $h\colon \p^{n-1}\to E_1(t)$ such that $\Phi_t=h\circ f_t'$. In particular, identifying $\G_t$ with $\fa_t'|_{E_1(t)}$ we can assert that
\[
\wt\fa'_t=\Phi_t^*(\G_t)\,\,.
\]

Now, we fix a singularity of $\G_t$, say $q(t)=q_1(t)$, with $V_1'(t)=\Phi_t^{-1}(q_1(t))$. Since $\G_t\in\M(d)$ it has $n-1$ analytic separatrices through $q_1(t)$, all smooth, say $\g_1(t),...,\g_{n-1}(t)$, and no other local analytic separatrix. Each separatrix $\g_j(t)$ is a germ of complex curve through $q(t)$ such that $\g_j(t)\setminus\{q(t)\}$ is contained in some leaf of $\G_t$.
If $\G_t\in\M_g(d)$ then its Zariski closure $\ov{\g_j(t)}^Z$ is $E_1(t)=\p^{n-1}$, because $\G_t$ has no proper algebraic invariant subset of positive dimension.
We fix one of these separatrices, say $\g_1(t)$. By construction the set $\Phi_t^{-1}(\g_1(t))$ satisfies the following property:
\begin{itemize}
\item[1.] It is $\wt\fa_t'$-invariant. In other words, $V_1'(t)\sub\Phi^{-1}_t(\g_1(t))$ and $\Phi^{-1}_t(\g_1(t))\setminus V_1'(t)$ is an open subset of some leaf of $\wt\fa_t'$.
\end{itemize}
We can assert also that:
\begin{itemize}
\item[2.] If $\G_t\in\M_g(d)$ then the Zariski closure $\ov{\Phi^{-1}_t(\g_1(t))}^{\,Z}$ is $\p^n$. This follows from the relation
\[
\Phi_t^{-1}\left(\ov{\g_1(t)}^{\,Z}\right)=\ov{\Phi_t^{-1}(\g_1(t))}^{\,Z}\,\,.
\]
\end{itemize}

{\bf Notation.} A {\it strip} $\Te$ around $V_1'(t)$ is a germ of smooth complex surface along $V_1'(t)$, containing $V_1'(t)$.
We say that a strip $\Te$ is a separatrix of $\fa_t'$ (resp. $\wt\fa_t'$) along $V_1'(t)$ if $\Te\setminus V_1'(t)$ is an open set of some leaf of $\fa'_t$ (resp. $\wt\fa_t'$).

Note that the set $\Phi^{-1}_t(\g_1(t))$ is a separatrix of $\wt\fa_t'$.
\begin{claim}\label{cl:38}
{\rm Let $\Te$ be a strip around $V_1'(t)$. Then for any $q\in\Te\cap E_1(t)$ the fiber $\Phi_t^{-1}(q)$ is contained in $\Te$.
In particular, if $\Te\cap E_1(t)$ is a separatrix of $\G_t$ then $\Te$ is a separatrix of $\wt\fa_t'$ along $V_1'(t)$.}
\end{claim}
\begin{proof}
Note that $\Te$ is transverse to $E_1(t)$. 
Consider a representative of $\Te$ transverse to $E_1(t)$, denoted by the same symbol. Since $\Phi_t$ is a submersion at the points of $V_1'(t)=\Phi_t^{-1}(q(t))$, there exists a holomorphic coordinate system around $q(t)\in E_1(t)\sub M(t)$, say $(x,y)\colon U\to\C^{n-1}\times\C$, $x=(x_1,...,x_{n-1})$, such that
\begin{itemize}
\item[(i).] $x(q(t))=0\in\C^{n-1}$ and $y(q(t))=0\in\C$.
\item[(ii).] $E_1(t)\cap U=(y=0)$ and $V_1'(t)\cap U=(x=0)$.
\item[(iii).] $\Phi_t(x,y)=(x,0)$.
\item[(iv).] $\Te\cap E_1(t)\cap U=(x_2=...=x_{n-1}=y=0)$. In other words, $\Te\cap E_1(t)$ is the $x_1$-axis in this coordinatte system.
\end{itemize}
Given $p(s):=(s,0,...,0)\in\Te\cap E_1(t)\cap U$ let 
$\sigma_s=\{(x,0)\in E_1(t)\cap U\,|\,x_1=s\}$ and $\Sigma_s:=\Phi_t^{-1}(\sigma_s)$.
Note that $\sigma_s$ is a hypersurface of $E_1(t)$ transverse to $\Te\cap E_1(t)$ at the point $p(s)\in \Te\cap E_1(t)$. This implies that $\Sigma_0$ is an hypersurface transverse to $\Te$ along $V_1'(t)$.

Let $B_r=\{(x,0)\in U\,;\,\,\,||x||<r\}$, the ball of radius $r>0$ in $E_1(t)$ centered at $0\in U$.
Since $\Phi_t$ is a submersion at the points of $V_1'(t)$ and the fibers of $\Phi_t$ are compact, there exists $\rho>0$ such that all points of $B_\rho$ are regular values of $\Phi_t$. This implies that, if $p(s)\in B_\rho$ then $\Sigma_s$ is a piece of smooth hypersurface of $M(t)$ containing $\Phi_t^{-1}(p(s))$. By compactness, taking a smaller $\rho$ if necessary, we can assume that $\Sigma_s$ is transverse to $\Te$,  and $\Te\cap \Si_s:=\theta_s$ is a compact curve of $M(t)$ contained in $\Te$, $\forall\,|s|=||p(s)||<\rho$.
Note that $p(s)\in\te_s$, $\forall\,|s|<\rho$.

Since $\theta_s$ is a compact curve, the set $\Phi_t(\theta_s)$ is a compact analytic subset of $B_\rho$. But this implies that $\Phi_t(\theta_s)$ is a point: $\Phi_t(\theta_s)=p(s)$. Therefore, $\theta_s$ is the fiber of $\Phi_t$ through $p(s)$.
Hence, if $|s|$ is small then the fiber $\Phi^{-1}(p(s))$ is contained in $\Te$. This finishes the proof of claim \ref{cl:38}.
\end{proof}

Now, the idea is to prove that $\fa'_t$ has some separatrix $\Te$ along $V_1'(t)$. Note that $\Te\cap E_1(t)$ is a separatrix of $G_t$ because $\G_t=\fa'_t|_{E_1(t)}$. By claim \ref{cl:38}, $\Te$ is also a separatrix of $\wt\fa_t'$, so that $\fa'_t$ and $\wt\fa_t'$ have a common leaf. This will conclude the proof of lemma \ref{l:36} and of theorem \ref{teob}.

\vskip.1in

In the construction of the separatrix $\Te$ of $\fa'_t$ along $V_1'(t)$ we use the fact that $V_1'(t)\sub K(\fa'_t)$, the Kupka set of $\fa'_t$.
Indeed, first of all $V_1'(t)\setminus \bigcup_j E_j(t)\sub K(\fa'_t)$ by lemma \ref{l:34}, because $\fa_t'=\pi_t^*(\fa_t)$.
On the other hand, the point of $V_1'(t)\cap E_j(t)$, $1\le j\le \nu^n$, is also in the Kupka set because $P_j(t)$ is a conic NGK singularity of $\fa_t$ (see \S \ref{ss:25}). We leave the details to the reader.

Since $V_1'(t)\sub K(\fa'_t)$ and $\fa'_t|_{E_1(t)}=\G_t$ there exist a finite covering $(U_{\a\,t})_{\a\in A}$ of $V_1'(t)$, a ball $B_\rho\sub E_1(t)$ around $q(t)$ and submersions $\phi_\a\colon U_\a\to B_\rho$ such that
\[
\fa'_t|_{U_\a}=\phi_a^*\left(\G_t|_{B_\rho}\right)\,\,.
\] 
If we fix some separatrix of $\G_t$ at $q(t)$, say $\g_1(t)$, then the set $\Te_\a(t):=\phi_\a^{-1}(\g_1(t))$ is a separatrix of $\fa'_t|_{U_\a}$ along $V_1'(t)\cap U_\a$, $\forall\,\a\in A$. On the other hand, they glue together in the sense that, if $U_\a\cap U_\be\ne\emp$ then 
\[
\Te_\a(t)\cap U_\a\cap U_\be=\Te_\be(t)\cap U_\a\cap U_\be\,\,.
\]
In fact, since $\fa_t'|_{U_\a\cap U_\be}=\phi_a^*(\G_t)=\phi_\be^*(\G_t)$ then $\phi_\a(\Te_\be(t)\cap U_\a\cap U_\be)$ is one of the separatrices of $\G_t$ through $q(t)$, $\g_j(t)$, $1\le j\le n-1$. On the other hand, it must be the separatrix $\g_1(t)$, because for $t=0$ we have $\fa'_0=\wt\fa'_0$ and $\Te_\be(t)$ is a deformation in $U_\be$ of the global separatrix $\Phi_0^{-1}(\g_1(0))$.
Hence, $\Te=\bigcup_\a\Te_\a(t)$ is a separatrix of $\fa'_t$ along $V_1'(t)$, as wished.
We leave the details for the reader.
\end{proof}

\subsection{Proof of theorem \ref{teob} in the case $k\ge3$}\label{ss:34}
As in the proof in \S\,\ref{ss:33} we will consider a germ of one parameter family of foliations $(\fa_t)_{t\in(\C,0)}$ on $\p^n$ such that $\fa_0=f^*(\G)\in PB(\nu,d,k,n)$, where the pair $(f,\G)$ is generic, and we will prove that $\fa_t\in PB(\nu,d,k,n)$ for all $t\in(\C,0)$. We will assume also that $\G\in \M_g(d)\sub\fol(d,1,m)$, as in the preceding proof. 
Since the dimension of $\fa$ is $k\ge3$, its codimension is $q=n-k$, the same codimension of $\G$ in $\p^m$, so that $m=q+1$.

Now we describe briefly the foliation $\fa$ near the indeterminacy locus $I(f)$. Fix $p\in I(f)$.
With an argument similar to the argument of \S\,\ref{ss:22} there exists a holomorphic coordinate system $(U,(x,y))$, where $x=(x_1,...,x_{m+1})\colon U\to\C^{m+1}$, $y\colon U\to\C^{n-m-1}$, such that
\[
f(x_1,...,x_{m+1},y)=[x_1:....:x_{m+1}]\,\,,\,\,\forall\,(x,y)\in U\,\,.
\]
If $\G$ is represented in homogeneous coordinates by the integrable $q$-form $\eta$ on $\C^{m+1}$, with homogeneous coefficients, then 
$\fa=f^*(\G)$ is represented in $U$ by $f^*(\eta)$, which has the same expression of $\eta$. In particular, $\fa|_U$ is equivalent to the product of a two dimensional homogeneous foliation on $\C^{m+1}$ by the regular foliation of dimension $n-m-1=k-2$ whose leaves are the levels $x=c$, $c\in\C^{m+1}$. Observe that the restriction $\fa|_{y=c'}$, $c'\in\C^{n-m-1}$, has a conic singularity at $(0,c')$.

\begin{definition}\label{d:39}
Let $\H$ be a germ of $k$-dimensional foliation on $(\C^n,0)$, $k\ge2$. We say that $\H$ has a {\it conic} singularity at $0\in\C^n$ if there exists a decomposition $\C^n=\C^{n-k+2}\times\C^{k-2}$ for which the restriction $\H|_{\C^{n-k-2}\times\{0\}}$ has a conic singularity at $(0,0)$.
We say that the singularity is NGK if the conic singularity is NGK (see \S\,\ref{ss:25}).
\end{definition}
Let $\H$ be as in definition \ref{d:39} and $\eta$ be a germ of integrable $(n-k)$-form defining $\H$. Clearly the condition in definition \ref{d:39} is equivalent to the fact that $\wt\eta:=\eta|_{\C^{n-k-2}\times\{0\}}$ has the first non-zero jet of conic type.
The singularity is NGK if, and only if, $\wt\eta$ is conic and $(0,0)$ is an isolated singularity of $d\wt\eta$ in the plane $\C^{n-k-2}\times\{0\}$. 

If $\fa=f^*(\G)$, where the pair $(f,\G)$ is generic then all points of $I(f)$ are conic NGK singularities of $\fa$. This is a consequence of the structure of local product described above, the fact that $\G\in\M(d)$ and of lemma \ref{l:211}.

Let $(\fa_t)_{t\in(\C,0)}$ be a germ of deformation of $\fa_0=\fa=f^*(\G)$.
We will denote the set of conic points of $\fa_t$ by $Con(\fa_t)$.
The proof of theorem \ref{teob} will be finished with two auxiliary results.

\begin{lemma}\label{l:310}
{\rm Let $(\fa_t)_{t\in(\C,0)}$ be a germ of one parameter deformation of $\fa_0=f^*(\G)$, where the pair $(f,\G)$ is generic. Then there exists a germ of $C^\infty$-isotopy $\Phi\colon I(f)\times(\C,0)\to\p^n$ such that $\Phi_0(I(f))=I(f)$ and $\Phi_t(I(f))=Con(\fa_t)$, $\forall t\in(\C,0)$, where $\Phi_t(z)=\Phi(z,t)$.}
\end{lemma}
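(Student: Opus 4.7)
My plan is to build $\Phi$ locally using the product structure of $\fa_0$ near $I(f)$ together with a parametric version of Corollary \ref{c:217}, and then to glue the local pieces by invoking the uniqueness of the conic NGK singularity. Since $I(f)$ is a smooth compact complete intersection of dimension $k-2$, a finite covering argument suffices to globalize the construction.

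Locally, I would fix $p \in I(f)$ and choose a chart $(U,(x,y))$, with $x \colon U \to \C^{m+1}$, $y \colon U \to \C^{k-2}$, in which $I(f) \cap U = \{x=0\}$ and $\fa_0|_U$ splits as the product of the two-dimensional foliation on $\C^{m+1}$ that represents $\G \in \M'_g(d)$ in homogeneous coordinates with the trivial foliation whose leaves are $\{y=\text{const}\}$, as described just before Definition \ref{d:39}. For every $c'$ close to $0$ in $\C^{k-2}$, the restriction $\fa_0|_{\{y=c'\}}$ then has a conic NGK singularity of degree $d$ at $(0,c')$. For each such $c'$, I would apply Corollary \ref{c:217} to the restricted one-parameter family $(\fa_t|_{\{y=c'\}})_{t \in (\C,0)}$ to obtain a holomorphic germ $t \mapsto Q(t,c')$ with $Q(0,c')=(0,c')$ tracking the conic point. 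The key point is that the proof of Theorem \ref{t:216} underlying Corollary \ref{c:217} is essentially an implicit function argument applied to the isolated zero of the rotational of a local defining form, so it accepts $c'$ as an additional holomorphic parameter without modification. After shrinking both a neighborhood $W \ni 0$ in $\C^{k-2}$ and the disc in $t$, this yields a holomorphic map $Q \colon W \times D_{r'} \to U$ and hence a local isotopy $\Phi^p((0,c'),t):=Q(t,c')$.

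To globalize, I would cover the compact set $I(f)$ by finitely many such charts $U_1,\dots,U_s$. On overlaps, the local maps $\Phi^{p_i}$ and $\Phi^{p_j}$ both track, at identical slice parameter, the unique conic NGK singularity of the same restricted family, so the uniqueness part of Corollary \ref{c:217} forces them to agree. The glued map $\Phi \colon I(f) \times (\C,0) \to \p^n$ is then $C^\infty$ (indeed holomorphic in the deformation parameter) and satisfies $\Phi_0 = \mathrm{id}_{I(f)}$ together with $\Phi_t(I(f)) \subset Con(\fa_t)$. The reverse inclusion holds because every conic point of $\fa_t$ must lie in some slice $\{y=c'\}$ close to $I(f)$ and must coincide with $Q(t,c')$ by the uniqueness statement in Corollary \ref{c:217}.

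The step I expect to be the main obstacle is controlling the parametric regularity of $Q$: one needs the isolated-singularity hypothesis for the rotational of the restricted defining $(n-k)$-form to remain stable as the slice parameter $c'$ varies, and the implicit function construction has to run uniformly in $c'$. Once this holomorphic dependence on parameters is established, both the gluing and the identification $\Phi_t(I(f)) = Con(\fa_t)$ become essentially formal consequences of the uniqueness and openness properties already built into Corollary \ref{c:217}.
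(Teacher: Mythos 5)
Your proposal is correct and follows essentially the same route as the paper: slice transversally to $I(f)=Con(\fa_0)$, apply the persistence of conic NGK singularities (Theorem \ref{t:216}/Corollary \ref{c:217}) with the slice coordinate as an extra parameter to get a holomorphic graph $Con(\fa_t)$ over $Con(\fa_0)$, and glue using uniqueness of the conic point in each slice. The parametric regularity you flag as the main obstacle is already built into the paper's Theorem \ref{t:216}, which is stated for a multi-dimensional parameter ball $B_r\subset\C^m$, so one simply takes the joint parameter $(t,c')$.
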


\begin{lemma}\label{l:311}
{\rm Let $\eta$ be a germ at $0\in \C^n$ of integrable $q$-form, where $n\ge q+3$. Assume that $\eta$ has a conic NGK singularity of degree $d\ge2$ at $0\in\C^n$. Then there exists a germ of biholomorphism $\phi=(x,y)\colon(\C^n,0)\to(\C^{q+2}\times\C^{n-q-2},(0,0))$ such that $\phi_*(\eta)$ is dicritical, has homogeneous coefficients of degree $d+1$ and depends only on $x=(x_1,...,x_{q+2})$:
\[
\phi_*(\eta)=\sum_\si\,A_\si(x)\,dx_{\si(1)}\wedge...\wedge dx_{\si(q)}\,\,.
\]

In other words, the foliation $\fa_\eta$ is equivalent to a product of a singular foliation of dimension two on $\C^{q+2}$ by a regular foliation of dimension $n-q-2$.}
\end{lemma}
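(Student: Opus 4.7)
The plan is, in four steps, to (i) track the conic NGK singularity as $y$ varies in the complementary direction and straighten the resulting singular manifold, (ii) normalize $\eta$ slicewise by a parametric Poincar\'e linearization, (iii) invoke Frobenius integrability to kill the remaining transverse dependence, and (iv) read off the conclusion. Write coordinates on $\C^n$ as $(x,y)\in\C^{q+2}\times\C^{n-q-2}$ adapted to the splitting of Definition \ref{d:39}, so that $\wt\eta=\eta|_{\C^{q+2}\times\{0\}}$ is a conic NGK $q$-form of degree $d$, and set $\eta_y:=\eta|_{\C^{q+2}\times\{y\}}$.

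For Step 1, a parametric version of Theorem \ref{t:216} applied to the family $(\eta_y)$ with multi-parameter $y$ yields a holomorphic germ $Q\colon(\C^{n-q-2},0)\to(\C^{q+2},0)$, $Q(0)=0$, such that $Q(y)$ is a conic NGK singularity of degree $d$ of $\eta_y$. The biholomorphism $(x,y)\mapsto(x-Q(y),y)$ moves the locus of conic singularities to $\{0\}\times\C^{n-q-2}$, so we may assume $Q\equiv 0$. For Step 2, a parametric version of Corollary \ref{c:217} gives holomorphic families $X_y,Y_y$ of germs at $0\in\C^{q+2}$ with $\eta_y=i_{Y_y}i_{X_y}\nu_x$, $X_y$ conic of degree $d$, and $DY_y(0)=\rho R_x$, where $R_x=\sum x_i\,\partial/\partial x_i$ and $\rho=1/(q+d+1)$. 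Since the linear part of $Y_y$ has all eigenvalues equal to $\rho$ (Poincar\'e domain), the holomorphic-parameter Poincar\'e linearization theorem produces a biholomorphism $(x,y)\mapsto(h_y(x),y)$ with $h_y(0)=0$, $Dh_y(0)=\mathrm{Id}$, that simultaneously conjugates every $Y_y$ to $\rho R_x$. After this change, $i_{R_x}\eta_y=0$ and the coefficients of $\eta_y$ are homogeneous polynomials of degree $d+1$ in $x$ depending holomorphically on $y$.

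For Step 3, decompose $\eta=\alpha+\beta$ where $\alpha=\sum_I A_I(x,y)\,dx_I$ is the pure-$dx$ part and $\beta$ contains a factor of some $dy_j$. From Step 2, $i_{R_x}\alpha=0$ and each $A_I(\cdot,y)$ is homogeneous of degree $d+1$ in $x$. I would then blow up the singular manifold $\{0\}\times\C^{n-q-2}$: the strict transform $\pi^*\fa_\eta$ extends across the exceptional $\p^{q+1}$-bundle $E\to\C^{n-q-2}$, and its restriction to each fiber is a one-dimensional foliation in $\M(d)$. The Frobenius integrability of $\eta$ then forces these fiberwise foliations to assemble into a trivial product over $E$, and blowing down yields $\beta=0$ and $\partial_{y_j}A_I\equiv 0$. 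Consequently $\phi_*(\eta)=\sum_I A_I(x)\,dx_I$ with $A_I$ homogeneous of degree $d+1$ and $i_{R_x}\phi_*(\eta)=0$, which is the claim.

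The main obstacle is the integrability argument in Step 3: translating the slicewise data of Step 2 into a genuine product decomposition. Steps 1--2 are essentially parametric extensions of results in \S \ref{ss:25}; the technical heart lies in showing that the $dy_j$-components permitted \emph{a priori} by the $q$-form structure are eliminated by Frobenius. The hypothesis $n\ge q+3$ guarantees at least one transverse direction, without which the statement reduces to the conic NGK case on $\C^{q+2}$ itself.
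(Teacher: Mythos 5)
Your Steps 1 and 2 match the opening of the paper's own argument (the paper likewise straightens the conic locus to $(x=0)$ by applying the parametric stability theorem \ref{t:216} to the family $y\mapsto\eta_y$ and translating), but Step 3 --- which you yourself flag as ``the technical heart'' --- is asserted rather than proved, and it is precisely where the whole content of the lemma lies. Saying that ``Frobenius integrability forces these fiberwise foliations to assemble into a trivial product over $E$'' names the desired conclusion; it is not an argument. Nothing in the blow-up picture explains why the degree-$d$ foliations induced on the fibers of the exceptional $\p^{q+1}$-bundle cannot vary with $y$, nor why the mixed terms $\beta$ involving $dy_j$ must vanish. Note also that the constancy of the normal type along the conic locus is recorded in the paper as a \emph{consequence} of Lemma \ref{l:311} (Remark \ref{r:312}), so it cannot be fed back in as an ingredient of Step 3.

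The mechanism the paper actually uses, and which your proposal is missing, is Claim \ref{cl:313}: the construction of a vector field $Y=\frac{\pa}{\pa y_r}+\sum_{j}B_j(x,y_1,\dots,y_r)\frac{\pa}{\pa x_j}$ tangent to the foliation, i.e.\ satisfying $i_Y\eta=0$ and $i_Y\,d\eta=0$; the flow of $Y$ then straightens out the variable $y_r$, and one concludes by downward induction on the number of $y$-variables on which $\eta$ depends. The existence of $Y$ is where integrability and the NGK hypothesis genuinely enter: writing $d\eta|_{\Si_{\wt{y}}}=i_X\nu_o+\te\wedge dy_r$ with $X$ the slicewise rotational, integrability gives $i_X\te=0$, and since $X$ has an isolated singularity on each slice (this is exactly the NGK condition, via Lemma \ref{l:211}) de Rham's division theorem yields $\te=i_X\mu$, from which $Y$ is assembled. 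To repair your proof you must either reproduce this division argument or give an equally concrete reason why an integrable germ of dimension $k=n-q\ge3$ with a conic NGK slice admits $k-2$ independent tangent directions transverse to the slices; the hypothesis $n\ge q+3$ makes such directions available inside the leaves, but their existence as a coherent flow still has to be exhibited, and your Step 3 does not do so.
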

The proof of lemmae \ref{l:310} and \ref{l:311} will be done at the end.

\vskip.1in

{\bf Notation.} The form $\phi_*(\eta)$ represents a one dimensional foliation $\G$ on $\p^{q+1}$. The foliation $\G$ will be called the normal type of the foliation at the conic singularity.

\begin{rem}\label{r:312}
{\rm When the normal type is $NGK$ then Lemma \ref{l:311} implies that it is locally constant along the set of conic points.}
\end{rem}

Let us finish the proof of theorem \ref{teob}.
First of all, $Con(\fa_0)=I(f)$ is the complete intersection of $m+1$ hypersurfaces of degree $\nu$, so that $dim_\C(Con(\fa_0))=n-m-1=k-2\ge1$. 
On the other hand, by lemma \ref{l:310}, $Con(\fa_t)$ is a deformation of $Con(\fa_0)$, and so we can use Sernesi's theorem\, \cite{Ser}: there are holomorphic families of homogeneous polynomials of degree $\nu$, $F_{0\,t},...,F_{m\,t}$, that define $Con(\fa_t)$ in homogeneous coordinates, for all $t\in(\C,0)$. Each $F_t=(F_{0\,t},...,F_{m\,t})\colon \C^{n+1}\to\C^{m+1}$ defines a rational map $f_t\colon\p^n-\to\p^m$ such that $I(f_t)=Con(\fa_t)$.

Now, the normal type of $\fa_t$ along $Con(\fa_t)$ is locally constant, by remark \ref{r:312}. Since $Con(\fa_t)$ is a complete intersection, it is connected, so that the normal type is constant along $Con(\fa_t)$, for all $t\in(\C,0)$. Let $\G_t$ be the normal type of $\fa_t$ along $Con(\fa_t)$. The idea is to prove that $\fa_t$ is equivalent to $f_t^*(\G_t)$, for all $t\in(\C,0)$.

Let us define $f_t$ in such a way that $f_t^*(\G_t)=\fa_t$ for alll $t\in(\C,0)$. Fix a $q+2$ plane $H_o$ transverse to $Con(\fa_0)$ and fix a point $p\in H_o\cap Con(\fa_0)$. Since $p$ is a conic NGK point of $\fa_0|_{H_o}$, by theorem \ref{t:216}, there exists a holomorphic germ $Q\colon(\C,0)\to H_o$ such that $Q(0)=p$ and $Q(t)$ is a conic NGK singularity of $\fa_t|_{H_o}$. Since $t\mapsto Q(t)$ is holomorphic, after an automorphism of $\p^n$ that preserves $H_o$, we can assume that $Q(t)=p$ for all $t\in(\C,0)$.
Now, we blow up $H_o$ at $p$\, obtaining a divisor $E\simeq\p^{q+1}$ and a holomorphic one parameter family of foliations on $E$ that we can assume to be the family $(\G_t)_{t\in(\C,0)}$. Fix a ball $U$ around $p$ in $H_o$ such that $I(f_t)\cap H_o\cap U$ contains only the point $p$. If $U$ is small then we can assume that all fibers of $f_t$ cut $H_o\cap U$ in a smooth curve passing through $p$. The strict transform of this curve after the blow-up cuts $E$ in a unique point and this defines a rational map $\p^n-\to E\simeq\p^{q+1}$. This map is equivalent to $f_t$, so that we can assume that $f_t$ is constructed in this way.
Now, if we apply the argument of \S\,\ref{ss:33} we see that $f_t^*(\G_t)|_{H_o}=\fa_t|_{H_o}$, for all $t\in(\C,0)$.
The same argument can be applied to any $q+2$ plane $H$ transverse to $Con(\fa_0)$ to show that $\fa_t|_H=f_t^*(\G_t)|_H$, for all $t\in(\C,0)$.
This implies that $\fa_t=f_t^*(\G_t)$, $\forall t\in(\C,0)$, as the reader can check. This finishes the proof of theorem \ref{teob}.

\vskip.1in

{\it Proof of lemma \ref{l:310}.}
Lemma \ref{l:310} is a consequence of the stability of conic singularities, theorem \ref{t:216}. First of all, let us prove the local stability. 
Given $z\in Con(\fa_0)$ fix coordinates $(U,(x,y))$, $\phi=(x,y)\colon U\to Q_1\times Q_2\sub \C^{n-q-2}\times\C^{q+2}$ with $z\in U$ and $\phi(z)=(0,0)$, such that $U\cap Con(\fa_0)=(y=0)$.
Given $(x_o,0)\in Con(\fa_0)$ set $L_{x_o}:=\{(x,y)\in U\,|\,x=x_o\}\simeq Q_2$. Each $L_x$ is transverse to $Con(\fa_0)$, so that $\fa_0|_{L_x}$ has a conic NGK singularity at $(x,0)$. Applying theorem \ref{t:216} to the family of foliations $(\fa_t|_{L_x})_{(t,x)}$, viewed as a family of foliations on the open set $Q_2$ of $\C^{q+2}$, we obtain a holomorphic map $\psi\colon Q_1\times(\C,0)\to Q_2$ such that $\psi(x,0)=(x,0)$ and $\psi(x,t)$ is the unique conic singularity of $\fa_t|_{L_x}$ in $L_x$. This holomorphic local version implies that, locally "$Con(\fa_t)$ is a graph over $Con(\fa_0)$", and this implies the $C^\infty$ global version. We leave the details to the reader.
\qed

\vskip.1in

{\it Proof of lemma \ref{l:311}.}
According to definition \ref{d:39}, there exists a decomposition $\C^n=\C^{q+2}\times\C^{k-2}$ such that $\eta|_{(\C^{q+2},0)\times\{0\}}$ has a conic NGK singularity at $(0,0)$.
Let us fix some notations. We denote the coordinates in $\C^{q+2}\times\C^{k-2}$ as $(x,y)$, where $x=(x_1,...,x_{q+2})$ and $y=(y_1,...,y_{k-2})$. Given sequences $\a=(1\le\a_1<...<\a_s\le q+2)$ and $\be=(1\le\be_1<...<\be_r\le k-2)$ we set $\#\a=s$, $\#\be=r$ and
$
dx^\a\wedge dy^\be=dx_{\a_1}\wedge...\wedge dx_{\a_s}\wedge dy_{\be_1}\wedge...\wedge dy_{\be_r}
$.
With this notation a germ $\Te$ of holomorphic $q$-form can be written as
\begin{equation}\label{eq:31}
\Te=\sum_{\#\a+\#\be=q}\,A_{\a,\be}(x,y)\,dx^\a\wedge dy^\be\,\,,\,\,A_{\a,\be}\in\O_n\,\,.
\end{equation}
We will say that the $q$-form $\Te$ depends of $r$ variables $y$, where $0\le r\le k-2$, if it can be written as
\[
\Te=\sum_{\underset{1\le\be_j\le r}{\#\a+\#\be=q}}\,A_{\a,\be}(x,y_1,...,y_r)\,dx^\a\wedge dy^\be\,\text{, if $r>0$, or as}\,\,\Te=\sum_{\#\a=q}\,A_\a(x)\,dx^\a\,\,,
\]
if $r=0$. In other words, $\Te$ depends only of $x_1,...,x_{q+2},y_1,...,y_r$ and of $dx_1,...,dx_{q+2},dy_1,...,dy_r$.

The idea is to prove that if $\eta$ depends on $r$ variables $y$, where $1\le r\le k-2$, then there exists a germ of biholomorphism $\var\colon(\C^n,0)\to(\C^n,0)$, of the form $\var(x,y)=(\var_1(x,y),y)$, such that $\var^*(\eta)$ depends on $r-1$ variables $y$.
Of course, this implies lemma \ref{l:311}.
The induction step will be reduced to the following:

\begin{claim}\label{cl:313}
{\rm There exists a germ of vector field $Y$ of the form
\begin{equation}\label{eq:32}
Y=\frac{\pa}{\pa y_r}+\sum_{j=1}^{q+2}\,B_j\,(x,y_1,...,y_r)\,\frac{\pa}{\pa x_j}
\end{equation}
such that $i_Y\eta=0$ and $i_Y\,d\eta=0$.}
\end{claim}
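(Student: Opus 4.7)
The plan is to set up the ansatz $Y=\pa/\pa y_r + Z$ directly, translate both conditions into equations on the coefficients of $Z$, and then solve by exploiting the conic NGK structure of $\eta$. First, decompose $\eta = \alpha + dy_r\wedge\beta$, where the $q$-form $\alpha$ and the $(q-1)$-form $\beta$ contain no $dy_r$ (nor any $dy_j$ for $j>r$, by hypothesis). A direct computation with $Y=\pa/\pa y_r+Z$, $Z=\sum_{j=1}^{q+2} B_j\,\pa/\pa x_j$, yields
\[
i_Y\eta \;=\; (\beta + i_Z\alpha) \;-\; dy_r\wedge i_Z\beta,
\]
so $i_Y\eta=0$ reduces to $i_Z\alpha=-\beta$ (the remaining piece $i_Z\beta=0$ follows automatically by contracting once more with $Z$). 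Using the identity $d\eta = d_{\hat r}\alpha + dy_r\wedge(\pa_{y_r}\alpha - d_{\hat r}\beta)$, where $d_{\hat r}$ denotes the exterior derivative in $x, y_1,\dots, y_{r-1}$, the analogous manipulation converts $i_Y d\eta = 0$ into $i_Z\, d_{\hat r}\alpha = d_{\hat r}\beta - \pa_{y_r}\alpha$.

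Next, I would exploit the conic NGK structure of $\eta$ at the origin. By theorem \ref{t:216} (and corollary \ref{c:217}) applied with deformation parameter $(y_1,\dots,y_r) \in \C^r$, the conic NGK point at $0$ persists as a holomorphic germ $Q\colon (\C^r,0)\to\C^{q+2}$, giving a smooth $r$-dimensional conic locus $C$ through the origin. After a holomorphic change of $x$-coordinates I may assume $Q\equiv 0$, so $C=\{x=0\}$ in the reduced space $\C^{q+2+r}$. Off the singular set, $\alpha$ has maximal rank and the algebraic equation $i_Z\alpha=-\beta$ determines $Z$ uniquely by linear algebra; the real issue is showing that the resulting $Z$ extends holomorphically across $C$. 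For this I would invoke theorem \ref{t:213}, which gives a structural expression $\eta = i_{Y_0}i_{X_0}\nu$ with $Y_0$ essentially radial in the $x$-directions along each slice, and use the explicit form of $Y_0$, $X_0$ to check that the a priori meromorphic expression for $Z$ obtained by inverting the first equation is in fact holomorphic in a full neighborhood of $0$.

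For the differential equation $i_Z d_{\hat r}\alpha = d_{\hat r}\beta - \pa_{y_r}\alpha$, the plan is to appeal to the Frobenius integrability of $\eta$. The two conditions $i_Y\eta=0$ and $i_Y d\eta=0$ together amount to $L_Y\eta=0$, so the geometric content is that $Y$ is an infinitesimal symmetry of $\eta$ (not merely tangent to the foliation). Since by construction $Y$ is tangent to the conic locus $C$, its flow preserves $C$ set-theoretically, and by the uniqueness of the conic point in corollary \ref{c:217} it must preserve the conic structure pointwise. Combined with integrability this forces $L_Y\eta=0$ in a neighborhood of the origin, which is exactly what is needed.

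The step I expect to be the main obstacle is precisely this last verification: the $Z$ produced by solving the algebraic equation $i_Z\alpha=-\beta$ must simultaneously satisfy the differential equation $i_Z d_{\hat r}\alpha = d_{\hat r}\beta - \pa_{y_r}\alpha$. Tangency $i_Y\eta=0$ is a pointwise linear-algebra condition, but $i_Y d\eta=0$ is a genuine PDE, and reconciling the two requires using the Frobenius integrability of $\eta$ together with the rigidity of conic NGK singularities in an essential way.
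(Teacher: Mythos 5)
Your reduction of $i_Y\eta=0$ to the pointwise condition $i_Z\alpha=-\beta$ (with $i_Z\beta=0$ following by contracting again) is correct, but the plan does not close, and the gap sits exactly where you predict. Two concrete problems. First, $i_Z\alpha=-\beta$ neither determines $Z$ uniquely nor is automatically solvable: the contraction map $v\mapsto i_v\alpha$ has nontrivial kernel at regular points, and $-\beta$ must be shown to lie in its image; you then still have to extend a chosen solution holomorphically across the singular locus, which is a genuine division problem, not a limiting argument. Second, and fatally, the argument offered for the differential condition $i_Y\,d\eta=0$ is not a proof. Tangency of $Y$ to the conic locus $C=(x=0)$ cannot force $L_Y\eta=0$: the field $\pa/\pa y_r$ is itself tangent to $C$ and preserves the conic point of every slice, yet $L_{\pa/\pa y_r}\eta=\pa_{y_r}\eta\neq0$ in general --- indeed the whole point of the induction step is that $\eta$ still depends on $y_r$. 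So ``preserving the conic structure pointwise, combined with integrability'' carries no content here; the entire claim is concentrated in the step you leave open.

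The paper's proof runs in the opposite direction and resolves both difficulties with one tool. After normalizing $Con(\eta)=(x=0)$ exactly as you do, it introduces the slice-wise rotational: a field $X=\sum_j X_j(x,y)\,\pa/\pa x_j$ with $d\eta_y=i_X\nu_o$, $\nu_o=dx_1\wedge\dots\wedge dx_{q+2}$, satisfying $i_X\eta=i_X\,d\eta=0$ and $Sing(X)=(x=0)$, so that $X$ has an isolated zero on each slice $\C^{q+2}\times\{y\}$. Writing $d\eta|_{\Sigma_{\wt y}}=i_X\nu_o+\te\wedge dy_r$ one gets $i_X\te=0$, and de Rham's division theorem with parameters --- this is what replaces your pointwise linear algebra and its extension problem --- yields a holomorphic $(q+1)$-form $\mu$ with $\te=i_X\mu$. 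The field $Y$ is then simply read off from $\nu_o+\mu\wedge dy_r=i_Y\left(dy_r\wedge\nu_o\right)$, which forces $Y=\pa/\pa y_r+\sum_jB_j\,\pa/\pa x_j$ and gives $d\eta|_{\Sigma_{\wt y}}=i_Xi_Y\left(dy_r\wedge\nu_o\right)$; hence $i_Y\,d\eta|_{\Sigma_{\wt y}}=0$ holds by pure exterior algebra, and integrability of $\eta$ and $d\eta$ then upgrades this to $i_Y\,d\eta=0$ and $i_Y\eta=0$. In short: the differential condition is built into the construction via de Rham division of $\te$ by $X$, and the algebraic condition is what gets deduced --- the reverse of your strategy. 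To repair your write-up you would need to supply an honest proof of $i_Z\,d_{\hat r}\alpha=d_{\hat r}\beta-\pa_{y_r}\alpha$, and the natural way to do so is precisely this division argument.
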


We will prove claim \ref{cl:313} at the end. Let us finish the proof of the induction step by using it.

Let $\phi\colon(\C\times\C^{q+2+r},(0,0))\to(\C^{q+2+r},0)$ be the local flow of $Y$. The reader can check using (\ref{eq:32}) that $\phi$ is of the form
\[
\phi(t,x,y)=(\phi_1(t,x,y),y_1,...,y_{r-1},y_r+t)\,\,.
\]
Define
$\var\colon(\C\times\C^{q+2+r-1},(0,0))\to(\C^{q+2+r},0)$ as
\[
\var(t,x,y_1,...,y_{r-1})=\phi(t,x,y_1,...,y_{r-1},0)=(\phi_1(t,x,y_1,...,y_{r-1},0),y_1,...,y_{r-1},t)\,\,.
\]
It can be verified that $\var$ is a germ of biholomorphism and that $\var^*(Y)=\frac{\pa}{\pa t}$. In particular, if we set $\wt\eta=\var^*(\eta)$ then
\[
i_{\frac{\pa}{\pa t}}d\wt\eta=0\,\,\text{and}\,\,i_{\frac{\pa}{\pa t}}\wt\eta=0\,\,\implies\,\,L_{\frac{\pa}{\pa t}}\wt\eta=0\,\,.
\]
Since $i_{\frac{\pa}{\pa t}}\wt\eta=0$ the form $\wt\eta$ does not contain terms with $dt$. Since $L_{\frac{\pa}{\pa t}}\wt\eta=0$ the coefficients of $\wt\eta$ do not depend on $t$. Therefore, $\wt\eta$ depends only of $r-1$ variables $y$, as wished.

\vskip.1in

{\it Proof of Claim} \ref{cl:313}.
Fix coordinates $(x,y)\in(\C^{q+2}\times\C^{k-2},(0,0))$ such that $\eta|_{(y=0)}$ has a conic NGK singularity.
We begin proving that we can assume that the set of conic NGK singularities of $\eta$, $Con(\eta)$, is $\{(x,y)\,|\,x=0\}$.

Indeed, fix coordinates $(x,y)=(x_1,...,x_{q+2},y_1,...,y_{k-2})\in\C^{q+2}\times\C^{k-2}$ such that $\eta|_{(y=0)}$ has a conic NGK singularity at $(0,0)$.
Given $y_o\in (\C^{k-2},0)$ set $\eta_{y_o}:=\eta|_{(y=y_o)}$.
If $\eta$ is written as in (\ref{eq:31}) then
\[
\eta_{y}=\sum_{\#\a=q}\,A_\a(x,y)\,dx^\a\,\,,\,\,\forall\,y\in(\C^{k-2},0)\,\,.
\]
According to the definition, $\eta_0$ has a conic NGK singularity at $0\in\C^{q+2}$.  We can consider $y\mapsto\eta_y$ as a holomorphic family of integrable $q$-forms. In this case, theorem \ref{t:216} implies that there exists a holomorphic germ $Q\colon(\C^{k-2},0)\to (\C^{q+2},0)$ such that $Q(0)=0$ and $Q(y)$ is the unique conic NGK singularity of $\eta_y$. In particular, $Con(\eta)=\{(Q(y),y)\,|\,y\in(\C^{k-2},0)\}$. Let $\phi$ be the germ of biholomorphism defined by $\phi(x,y)=(x+Q(y),y)$. Since $\phi^{-1}(Con(\eta))=(x=0)$, we get
\[
Con(\phi^*(\eta))=\phi^{-1}(Con(\eta))=(x=0),
\] 
which proves the assertion. From now on we will assume that $Con(\eta)=(x=0)$.

Let us assume that $\eta$ depends on $r$ variables $y$, where $1\le r\le k-2$.
Given $y_o\in{(\C^r,0)}$ let $\eta_{y_o}$ be as before:
\[
\eta_{y_o}=\eta|_{(y=y_o)}=\sum_{\#\a=q}\,A_\a(x,y_o)\,dx^\a\,\,.
\]
We can consider $d\eta_{y_o}$ as a $(q+1)$-form on $(\C^{q+2},0)$. In particular, there exists a holomorphic vector field
\begin{equation}\label{eq:33}
X=\sum_{j=1}^{q+2}\,X_j(x,y_o)\,\frac{\pa}{\pa x_j}
\end{equation}
such that $d\eta_{y_o}=i_X\,\nu_o$, where $\nu_o=dx_1\wedge...\wedge dx_{q+2}$. We consider $X$ as a germ of vector field on $(\C^n,0)$.
Since $d\eta$ is integrable, the fact that $i_X\,d\eta_{y}=0$ for all $y\in(\C^r,0)$ implies that $i_X\,d\eta=0$. The integrability of $\eta$ implies that $i_X\eta=0$.

\begin{rem}\label{r:314}
{\rm Note that $Sing(X)=Con(\eta)=(x=0)$.}
\end{rem}

\vskip.1in

From now on, we will write $y=(\wt{y},y_r)$, where $\wt{y}=(y_1,...,y_{r-1})$.
Given $\wt{y}_o=(y_{1\,o},...,y_{r-1\,o})\in(\C^{r-1},0)$ fixed, set $\Si_{\wt{y}_o}=\{(x,\wt{y},y_r)\,|\,\wt{y}=\wt{y}_o\}$ and $\eta_{\wt{y}}:=\eta|_{\Si_{\wt{y}}}$, so that $d\eta_{\wt{y}}=d\eta|_{\Si_{\wt{y}}}$. It follows from the above notations that there exists a germ of $q$-form $\te$, of the type
\[
\te=\sum_{\#\a=q}\,C_\a(x,y)\,dx^\a
\]
such that
\[
d\eta_{\wt{y}}=i_X\,\nu_o+\te\wedge dy_r\,\,.
\]
From $i_X\,d\eta=0$ we obtain $i_X\,d\eta_{\wt{y}}=0$, for all $\wt{y}\in(\C^{r-1},0)$. In particular,
\[
i_X d\eta_{\wt{y}}=i_X(\te\wedge dy_r)=i_X\te\wedge dy_r=0\,\,\implies\,\,i_X\te=0\,\,.
\]
Now, we use de Rham's division theorem in the parametric form, considering $\te$ and $X$ depending of the parameter $y\in(\C^r,0)$. Since $Sing(X)=(x=0)$, for $y\in(\C^r,0)$ fixed then $X$ has an isolated singularity at $0\in\C^{q+2}$. De Rham's theorem implies that there exists a $q+1$ form $\mu$ such that $\te=i_X\mu$, where
\[
\mu=\sum_{\#\a=q+1}D_\a(x,y)\,dx^\a\,\,.
\] 
Hence, we can write
\[
d\eta_{\wt{y}}=i_X(\nu_o+\mu\wedge dy_r)\,\,.
\]
The reader can verify that the $(q+2)$-form $\nu_o+\mu\wedge dy_r$ can be written as
\[
\nu_o+\mu\wedge dy_r=i_Y\,\left(dy_r\wedge dx_1\wedge...\wedge dx_{q+2}\right),
\]
where $Y$ is as in (\ref{eq:32}). On the other hand, $i_Y\,(\nu_o+\te\wedge dy_r)=0$ implies that $i_Y\,(d\eta_{\wt{y}})=0$ for all $\wt{y}\in(\C^{r-1},0)$.
Since $\eta$ and $d\eta$ are integrable, this implies that $i_Y\,d\eta=0$ and $i_Y\eta=0$, which proves claim \ref{cl:313} and lemma \ref{l:311}.
\qed

\section{Foliations that can be characterized by their singular set}

In \cite{cln1}, the authors, have proved that a foliation $\mathcal F$ on $\mathbb P^n, n\geq 3$ whose Kupka set $K(\mathcal F)$ contains a codimension two smooth irreducible component, say $\Lambda$, which is a complete intersection, has a rational first integral. Moreover, in this case, $K(\mathcal F)=\Lambda$. An analogous result for a particular class of pull-back foliations, was stated in \cite[Theorem B, p.709]{clne}.  Its worth point out that in \cite{oma}, the authors generalized the results of \cite{cln1} for foliations on $\mathbb{P}^n$ in any codimension. In particular they also obtained the same result contained in  \cite{cln1} for two dimensional foliations.   
Using Lemma \ref{l:36} a similar result to \cite[Theorem B, p.709]{clne} can be stated for two dimensional foliations $\mathcal F$ on $\mathbb P^n, $ which are pull-back of foliations on $\mathbb P^{n-1}$. 
\par Recall from Definition \ref{generic} the concept of a generic map.
\par Let $f  \in RM\left(n,n-1,\nu\right)$ be a generic map, $I(f)$ its indeterminacy locus and $\mathcal F$ a two dimensional foliation on $\mathbb P^n$, $n\geq3$. Consider the following properties:

\begin{center}
\begin{minipage}{10cm}
$\mathcal{P}_1:$ The foliation $\fa$ has a two dimensional conic NGK singularity of the type $(1,\dots,1,d-1)$ at any point of $I(f)$.
We assume also that at some point $p\in I(f)$ the normal type is a foliation $\G\in \M_g(d)$, where $d\ge2$. In particular, the Zariski closure of any leaf of $\G$ is all $\p^{n-1}$.
\end{minipage}
\end{center}

\begin{center}
\begin{minipage}{10cm}
$\mathcal{P}_2:$ There exists a fibre $f^{-1}(q)=V(q)$ such that $V(q)=\ov{f^{-1}(q)}\backslash I(f)$ is contained in the Kupka-Set of $\mathcal F$.
\end{minipage}
\end{center}

\begin{center}
\begin{minipage}{10cm}
$\mathcal{P}_3:$ $V(q)$ has transversal type $Y$, where $Y$ is a germ of vector field on $({\mathbb C^{n-1},0})$ of a hyperbolic singularity.
\end{minipage}
\end{center}

Lemma \ref{l:36} allows us to prove the following result:

\begin{main}\label{teoc}
In the conditions above, if properties $\mathcal{P}_1$, $\mathcal{P}_2$, and $\mathcal{P}_3$ hold then $\mathcal F$ is a pull back foliation, $\mathcal {F}= f^{*}(\mathcal{G})$, where $\mathcal{G}$ is the foliation of $\mathcal{P}_2$.  
\end{main}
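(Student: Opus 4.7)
The plan is to form the candidate pull-back $\wt{\fa}:=f^*(\G)$, where $\G\in\M_g(d)$ is the normal type prescribed by $\mathcal{P}_1$, and then to mimic the argument of Lemma \ref{l:36} to produce a leaf common to $\fa$ and $\wt{\fa}$ whose Zariski closure is all of $\p^n$, forcing $\fa=\wt{\fa}$ as algebraic foliations.

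First I would blow up $\p^n$ at the distinguished conic point $p\in I(f)$: let $\pi\colon M\to\p^n$ denote the blow-up, let $E_p\simeq\p^{n-1}$ be the exceptional divisor, and set $\fa'=\pi^*(\fa)$, $\wt{\fa}'=\pi^*(\wt{\fa})$. By $\mathcal{P}_1$ together with Lemma \ref{l:311} and the discussion at the end of \S\,\ref{ss:25}, the restriction $\fa'|_{E_p}$ equals $\G$, and the same holds for $\wt{\fa}'|_{E_p}$ by construction of $\wt{\fa}=f^*(\G)$. The lift $f'$ of $f$ extends holomorphically in a neighbourhood of $E_p$, is submersive there, and $f'|_{E_p}$ is a biholomorphism onto $\p^{n-1}$.

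Next, let $q=f(V(q))$ and let $V'(q)\sub M$ be the strict transform of $V(q)$, which meets $E_p$ transversely at the single point $q^*$ corresponding to $q$. By Lemma \ref{l:210} and $\mathcal{P}_2$, $q$ is a singularity of $\G$, and by $\mathcal{P}_3$ it is hyperbolic, so $\G$ has exactly $n-1$ smooth separatrices $\g_1,\dots,\g_{n-1}$ through $q$. Fixing $\g:=\g_1$, the hypersurface $\Te_{\wt\fa}:=(f')^{-1}(\g)$ is a separatrix of $\wt{\fa}'$ along $V'(q)$ meeting $E_p$ in $\g$. For $\fa'$, property $\mathcal{P}_2$ combined with the hyperbolicity in $\mathcal{P}_3$ provides a local Kupka product structure along $V'(q)\setminus\{q^*\}$ with normal type $Y$, giving $n-1$ local separatrix pieces transverse to $V'(q)$. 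Using the boundary condition $\fa'|_{E_p}=\G$ at $q^*$ to select the separatrix of $Y$ corresponding to $\g$, these pieces glue along $V'(q)$ by a Claim \ref{cl:38}-style argument into a global strip $\Te_\fa$ meeting $E_p$ in $\g$.

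The crux is to conclude $\Te_\fa=\Te_{\wt\fa}$: both are germs of smooth surfaces containing $V'(q)$ and cutting $E_p$ along $\g$; near any point of $V'(q)\setminus\{q^*\}$ the Kupka normal forms of $\fa'$ and $\wt{\fa}'$ share the transversal model $Y$, and the boundary data at $q^*$ pins down the same separatrix of $Y$ on both sides, so analytic continuation along $V'(q)$ yields the equality. Then $\Te_\fa\setminus V'(q)$ is an open piece of a leaf of both $\fa'$ and $\wt{\fa}'$. Since $\G\in\M_g(d)$, the Zariski closure of the leaf of $\G$ through $\g\setminus\{q\}$ is all of $\p^{n-1}$, so the Zariski closure of this common leaf is $(f')^{-1}(\p^{n-1})=M$, which pushes down to $\p^n$ under $\pi$. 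Two algebraic foliations of the same codimension sharing a Zariski-dense leaf must coincide, giving $\fa=\wt\fa=f^*(\G)$. The main obstacle is precisely this gluing step: one must verify that no Kupka monodromy around $V'(q)$ permutes the $n-1$ transversal separatrices of $Y$ away from the one anchored at $E_p$, which is ensured by the hyperbolicity in $\mathcal{P}_3$ together with the boundary match $\fa'|_{E_p}=\G$ provided by $\mathcal{P}_1$.
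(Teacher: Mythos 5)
Your proposal is correct and follows exactly the route the paper prescribes for this theorem: form $\wt\fa=f^*(\G)$ and rerun the common-leaf argument of Lemma \ref{l:36} after blowing up at a conic point, using $\G\in\M_g(d)$ to make the shared leaf Zariski dense. You also correctly isolate the one step of Lemma \ref{l:36} that does not transfer verbatim --- the gluing of the local separatrices along $V'(q)$, which in the lemma is anchored by the coincidence $\fa'_0=\wt\fa'_0$ at $t=0$ --- and your substitute (hyperbolicity of the transversal type $Y$ from $\mathcal{P}_3$, which forces any Kupka monodromy to fix each of the $n-1$ separatrices, together with the anchoring at $E_p$) is precisely the role that hypothesis plays here.
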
 

The idea of the proof is to consider the foliation $\wt\fa=f^*(\G)$ and use the argument of lemma \ref{l:36} to prove that $\wt\fa=\fa$.

An analogous result can be proved in the case of foliations of dimension greater than two.
Let $f\in RM(n,m,\nu)$ be generic, where $n\ge m+2$. In this case, its indeterminacy locus $I(f)$ is a smooth irreducible algebraic set of dimension $n-m-1\ge1$. Let $\fa$ be a dimension $k=n-m+1\ge3$ foliation with the following properties:

\begin{center}
\begin{minipage}{10cm}
$\wt{\mathcal{P}}_1:$ The set of conic points of $\fa$, $Con(\fa)$, contains $I(f)$. Moreover, the normal type of $\fa$ along $I(f)$ is NGK.
\end{minipage}
\end{center}

In this case, lemma \ref{l:311} implies that the normal type is constant along $I(f)$ (see remark \ref{r:312}).
Call this normal type $\G$. Since $f\colon \p^n-\to\p^m$, $\G$ is a foliation on $\p^m$.

\begin{center}
\begin{minipage}{10cm}
$\wt{\mathcal{P}}_2:$ The normal type $\G$ is generic in the sense of \S\,\ref{ss:21}: $\G\in\M_g(d)$, $d\ge2$.
\end{minipage}
\end{center}

\begin{center}
\begin{minipage}{10cm}
$\wt{\mathcal{P}}_3:$ There exists a fibre $f^{-1}(q)=V(q)$ such that $V(q)=\ov{f^{-1}(q)}\backslash I(f)$ is contained in the Kupka-Set of $\mathcal F$.
\end{minipage}
\end{center}

\begin{center}
\begin{minipage}{10cm}
$\wt{\mathcal{P}}_4:$ $V(q)$ has transversal type $Y$, where $Y$ is a germ of vector field on $({\mathbb C^m,0})$ of a hyperbolic singularity.
\end{minipage}
\end{center}

\vskip.1in

With the same arguments of \S\,\ref{ss:34} it is possible to prove the following:

\begin{main}\label{teod}
In the above conditions, if properties $\wt{\mathcal{P}}_1$, $\wt{\mathcal{P}}_2$,  $\wt{\mathcal{P}}_3$ and $\wt{\mathcal{P}}_4$  hold then $\mathcal F$ is a pull back foliation, $\mathcal {F}= f^{*}(\mathcal{G})$.  
\end{main}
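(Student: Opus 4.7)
The plan is to reduce Theorem \ref{teod} to Theorem \ref{teoc} by slicing with generic linear planes transverse to $I(f)$, following the method of \S\,\ref{ss:34}. Set $\wt{\fa}:=f^{*}(\G)$, a $k$-dimensional foliation on $\p^n$ well-defined by $\wt{\mathcal{P}}_1$ and $\wt{\mathcal{P}}_2$. It suffices to prove that the defining $(n-k)$-forms of $\fa$ and $\wt{\fa}$ coincide on a Zariski dense subset of $\p^n$.

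Fix a linear $(m+1)$-plane $H\sub\p^n$ in general position, in particular transverse to $I(f)$ (which has dimension $n-m-1$) and meeting the fiber $V(q)$ of $\wt{\mathcal{P}}_3$. Write $f_H:=f|_H\in RM(m+1,m,\nu)$, $\fa_H:=\fa|_H$, and $\wt{\fa}_H:=\wt{\fa}|_H=f_H^{*}(\G)$; these are two-dimensional foliations on $H\simeq\p^{m+1}$, and $I(f_H)=I(f)\cap H$ is a finite set. We verify that $\fa_H$ satisfies properties $\mathcal{P}_1$, $\mathcal{P}_2$, $\mathcal{P}_3$ of Theorem \ref{teoc} relative to $(f_H,\G)$. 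For $\mathcal{P}_1$, Lemma \ref{l:311} expresses $\fa$ in a neighborhood of each $p\in I(f)$ as a product of a two-dimensional conic NGK foliation, of normal type $\G$, by a regular foliation in $k-2$ variables; choosing $H$ transverse to the regular factor, $\fa_H$ inherits a conic NGK singularity of type $(1,\ldots,1;d-1)$ with normal type $\G\in\M_g(d)$. Property $\mathcal{P}_2$ follows from $\wt{\mathcal{P}}_3$ since transverse restriction preserves the Kupka condition and the fiber $f_H^{-1}(q)=V(q)\cap H$ remains in the Kupka set of $\fa_H$. Property $\mathcal{P}_3$ is $\wt{\mathcal{P}}_4$ verbatim, since the hyperbolic transversal type along $V(q)$ already lives on $\C^m$.

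Theorem \ref{teoc} then gives $\fa_H=f_H^{*}(\G)=\wt{\fa}_H$. Letting $H$ vary over the Zariski open set of $(m+1)$-planes in general position, whose union is Zariski dense in $\p^n$, the defining forms of $\fa$ and $\wt{\fa}$ agree on a Zariski dense subset, hence globally, yielding $\fa=f^{*}(\G)$.

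The main obstacle is $\mathcal{P}_1$ on the slice: one must ensure that after restriction the conic singularity on $H$ remains of type $(1,\ldots,1;d-1)$ with normal type exactly $\G$, and that this persists for a Zariski dense family of planes $H$. Lemma \ref{l:311}, together with the local constancy of the normal type along $Con(\fa)=I(f)$ (cf.\ Remark \ref{r:312}), supplies the required product structure, so $H$ must be chosen transverse both to $I(f)$ and to the $k-2$ regular directions given by the lemma, which is a generic condition on the Grassmannian of $(m+1)$-planes.
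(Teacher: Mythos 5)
Your proposal is correct and follows essentially the same route the paper intends: the paper proves Theorem~\ref{teod} ``with the same arguments of \S\,\ref{ss:34}'', i.e.\ by slicing with $(q+2)=(m+1)$-planes transverse to $Con(\fa)=I(f)$, using Lemma~\ref{l:311} and Remark~\ref{r:312} to get the constant conic normal type $\G$ along the connected set $I(f)$, and reducing each slice to the two-dimensional argument (Theorem~\ref{teoc} / Lemma~\ref{l:36}) before concluding by density of the slices. Your explicit verification of $\mathcal{P}_1$--$\mathcal{P}_3$ on a generic slice is exactly the content the paper leaves to the reader.
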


Motivated by theorems \ref{teob}, \ref{teoc} and \ref{teod}, we would like to state the following problems:

\begin{prob}\label{pr:1}
{\rm Is there a generalization of theorem \ref{teob} in the case of pull-backs by branched maps (not generic) like in \cite{WS}?}
\end{prob}

\begin{prob}\label{pr:2}
{\rm Let $\fa$ be a $k$-dimensional foliation on $\p^n$, where $2\le k<n$. Assume that $\fa$ has a conic NGK singularity. Is it true that $\fa$ is a pull-back foliation?}
\end{prob}

In fact, we don't know any example of foliation of dimension $\ge2$ having a conic singularity which is not a pull-back.
Another problem, motivated by problem \ref{pr:2}, is the following:

\begin{prob}\label{pr:3}
{\rm Is there a holomorphic foliation of dimension $\ge2$, on a complex connected manifold, having two or more conic NGK singularities of different normal types?}
\end{prob}

%ccc

\subsubsection*{Acknowledgments:} We are deeply grateful to D. Cerveau and T. Fassarella for the discussions, suggestions and comments. We would also like to thank E. Goulart for the corrections in the manuscript. This work was developed at IRMAR(Rennes, France) and was supported by Capes-Brasil, process number (9814-13-2).

%\section{Remarks and complements}

\bibliographystyle{amsalpha}

\end{document}